\title{On the Kobayashi-Hitchin correspondence\\ for K\"{a}hler currents}
\author{Satoshi Jinnouchi \thanks{Department of Mathematics, Graduate School of Science, Osaka University,
1-1, Machikaneyama-cho, Toyonaka, Osaka 560-0043, Japan.
email:{{\tt u122988d@ecs.osaka-u.ac.jp}},
email:{{\tt 20160312sti@gmail.com}}}}
\date{August 2025}
\newtheorem{theo}{Theorem}[section]
\newtheorem{lemm}[theo]{Lemma}
\newtheorem{corr}[theo]{Corollary}
\newtheorem{prop}[theo]{Proposition}
\numberwithin{equation}{section}
\theoremstyle{definition}
\newtheorem{defi}[theo]{Definition}
\newtheorem{rema}[theo]{Remark}
\newtheorem{assu}[theo]{Assumption}
\newtheorem{claim}[theo]{Claim}
\newcommand{\rk}{{\rm{rk}}}
\newcommand{\Supp}{{\rm{Supp}}}
\newcommand{\Amp}{{\rm{Amp}}}
\newcommand{\sing}{{\rm{sing}}}
\newcommand{\Sing}{{\rm{Sing}}}
\newcommand{\Exc}{{\rm{Exc}}}
\newcommand{\codim}{{\rm{codim}}}
\newcommand{\Id}{{\rm{Id}}}
\newcommand{\loc}{{\rm{loc}}}
\newcommand{\Hom}{{\rm{Hom}}}
\newcommand{\Tor}{{\rm{Tor}}}
\newcommand{\Herm}{{\rm{Herm}}}
\newcommand{\End}{{\rm{End}}}
\newcommand{\Ric}{\rm{Ric}}
\newcommand{\E}{\mathcal{E}}
\newcommand{\Tr}{{\rm{Tr}}}
\newcommand{\delbar}{\bar{\partial}}
\begin{document}
\date{\empty}
\maketitle
\begin{abstract}
In this paper, we show that if a holomorphic vector bundle is slope polystable with respect to a K\"{a}hler class, then it admits a Hermitian-Yang-Mills metric with respect to a suitable K\"{a}hler current with singularities in higher codimension which represents the K\"{a}hler class. Most parts of the proof remains valid for closed positive $(1,1)$-currents representing a nef and big class.
\end{abstract}
\tableofcontents

\section{Introduction}
The classical version of {\it{the Kobayashi-Hitchin correspondence}} states that a holomorphic vector bundle $E$ over a compact K\"{a}hler manifold $(X,\omega)$ is $\{\omega\}^{n-1}$-slope polystable if and only if $E$ admits an $\omega$-Hermitian-Yang-Mills metric. The correspondence was proved by Donaldson \cite{Don87} and Uhlenbeck and Yau \cite{UY} on compact K\"{a}hler manifolds. Bando and Siu \cite{BS} extended the correspondence for reflexive sheaves on compact K\"{a}hler manifolds by introducing the notion of admissible HYM metrics.
Recently the correspondence has been established for singular K\"{a}hler varieties endowed with smooth K\"{a}hler metrics \cite{Chen}. If K\"{a}hler metrics have at most orbifold singularities, related results were obtained in \cite{Faulk22} and \cite{CGNPPW23}.

The main result of our paper is a generalization of the Kobayashi-Hitchin correspondence for K\"{a}hler currents representing  K\"{a}hler classes.
The notion of a Hermitian-Yang-Mills metric (connection) for a current is defined as follows:
\begin{defi}
Let $X$ be a compact K\"{a}hler manifold and $T$ be a closed positive $(1,1)$-current which is smooth K\"{a}hler on a Zariski open set $\Omega$. Let $({E},h_0)$ be a complex hermitian vector bundle on $X$.\\
$(1)$ A {\it{$T$-admissible Hermitian-Yang-Mills (HYM, in short) connection}} on $(E, h_0)$ is a smooth $h_0$-connection $\nabla$ on $E|_{\Omega}$ which satisfies the followings:
\begin{itemize}
\item (integrability) $\delbar^{\nabla}\circ\delbar^{\nabla}=0$.
\item ($T$-HYM equation) $\sqrt{-1}\Lambda_TF_{\nabla}=\lambda\Id$ on $\Omega$ where $\lambda$ is a real number and
\item ($T$-admissible condition) $\int_{\Omega}|F_{\nabla}|_T^2T^n<\infty$.
\end{itemize}
\noindent We call $\lambda$ as the $T$-HYM constant of $\nabla$.\\
\noindent $(2)$ A $T$-admissible HYM metric on a holomorphic vector bundle $E$ is a smooth hermitian metric $h$ on $E|_{\Omega}$ such that the Chern connection $\nabla_h$ of $h$ is  a $T$-admissible HYM connection of $(E, h)$.
\end{defi}
The main result of the present paper is the following.
\begin{theo}[= Theorem \ref{main thm thm}]\label{main thm}
Let $(Y,\omega_Y)$ be a compact K\"{a}hler manifold.
Let $\omega$ be the solution to the complex Monge-Amp\`{e}re equation $\omega^n=e^F\omega_Y^n$, where $F$ satisfies the following conditions:
\begin{itemize}
\item there is an analytic subset $Z\subset Y$ with $\codim Z\ge 2$ such that $F\in C^{\infty}(Y\setminus Z)$ and
\item there is a constant $0\le a \le 1/2$ such that $F=-a\log(|f_1|^2+\cdots+|f_l|^2)+O(1)$ locally where $(f_1,\ldots,f_l)$ is a local defining functions of $Z$. 
\end{itemize}
Then, if a holomorphic vector bundle $F$ on $Y$ is $\{\omega\}^{n-1}$-slope polystable, then $F$ admits an $\omega$-admissible HYM metric.
\end{theo}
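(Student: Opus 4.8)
The plan is to approximate the singular current $\omega$ by smooth K\"{a}hler metrics, apply the classical Kobayashi--Hitchin correspondence along the approximating family, and then pass to the limit with enough uniform control to land on $\Omega:=Y\setminus Z$. (I write $E$ for the bundle denoted $F$ in the statement, keeping $F$ for the function in the Monge--Amp\`{e}re equation.) Since $0\le a\le 1/2$ and $\codim Z\ge 2$, the density $e^F$ lies in $L^p(Y,\omega_Y^n)$ for some $p>1$; choose smooth functions $F_\epsilon$ on $Y$ with $\int_Y e^{F_\epsilon}\omega_Y^n=\int_Y\omega_Y^n$, with $F_\epsilon\to F$ in $C^\infty_{\loc}(\Omega)$, and with $\|e^{F_\epsilon}\|_{L^p}$ bounded uniformly in $\epsilon$ --- for instance, replacing $|f_1|^2+\cdots+|f_l|^2$ by $|f_1|^2+\cdots+|f_l|^2+\epsilon$ in local models, gluing, and renormalizing. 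By Yau's theorem there is a smooth K\"{a}hler form $\omega_\epsilon=\omega_Y+dd^c\varphi_\epsilon$ in $\{\omega_Y\}=\{\omega\}$ with $\omega_\epsilon^n=e^{F_\epsilon}\omega_Y^n$. Ko\l{}odziej's stability estimate gives $\varphi_\epsilon\to\varphi$ uniformly, and the localized second-order and Evans--Krylov--Schauder estimates, which are valid on compact subsets of $\Omega$ because there $F_\epsilon$ is uniformly smooth, upgrade this to $\omega_\epsilon\to\omega$ in $C^\infty_{\loc}(\Omega)$.

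Slope polystability with respect to $\{\omega\}^{n-1}$ depends only on the cohomology class, so $E$ is $\{\omega_\epsilon\}^{n-1}$-slope polystable; by the Donaldson--Uhlenbeck--Yau theorem $E$ carries an $\omega_\epsilon$-HYM metric $h_\epsilon$, whose HYM constant $\lambda$ depends only on $\{\omega\}^{n-1}\cdot c_1(E)$, $\rk(E)$ and $\int_Y\omega^n$, hence is independent of $\epsilon$. Fix a smooth background metric $h_0$ on $E$ and normalize $h_\epsilon$ so that $h_\epsilon$ and $h_0$ induce the same metric on $\det E$.

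The heart of the proof is a family of uniform estimates for $s_\epsilon:=h_0^{-1}h_\epsilon>0$. The $\omega_\epsilon$-HYM equations give the standard inequality $\Delta_{\omega_\epsilon}\log\Tr(s_\epsilon)\ge-c\,(|\Lambda_{\omega_\epsilon}F_{h_0}|_{\omega_\epsilon}+|\lambda|)$ and its analogue for $s_\epsilon^{-1}$, so a Moser iteration bounds $\Tr(s_\epsilon)+\Tr(s_\epsilon^{-1})$ on any $K\Subset\Omega$ by $C_K\,(1+\|\log\Tr s_\epsilon\|_{L^1(\omega_\epsilon^n)})$, the local estimates being uniform in $\epsilon$ since $\omega_\epsilon\to\omega$ smoothly on a neighborhood of $K$. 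It remains to bound $\|\log\Tr s_\epsilon\|_{L^1(\omega_\epsilon^n)}$ uniformly in $\epsilon$, and this is the point where the hypotheses on $F$ --- in particular the threshold $a\le 1/2$ --- are used: via the uniform $L^p$ bound on $e^{F_\epsilon}$ and the resulting uniform global estimates for the $\omega_\epsilon$ (non-collapsing, Sobolev inequality, and $L^q$ bounds for the contraction of fixed smooth forms by $\omega_\epsilon$), the Uhlenbeck--Yau dichotomy applies: if the $L^1$ norms were unbounded, rescaling $\log\Tr(s_\epsilon)$ and passing to a weak limit would produce a nonzero weakly holomorphic projection, hence --- by Uhlenbeck--Yau's regularity and by Bando--Siu's extension of weakly holomorphic subbundles across the codimension-$\ge 2$ set $Z$ --- a coherent subsheaf of $E$ destabilizing it with respect to $\{\omega\}^{n-1}$, contradicting polystability (a strictly destabilizing subsheaf is excluded by semistability, and an equal-slope one would be a direct summand, contrary to the normalization). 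Hence $\sup_K(\Tr s_\epsilon+\Tr s_\epsilon^{-1})\le C_K$ uniformly in $\epsilon$ for every $K\Subset\Omega$.

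Granting this, elliptic bootstrapping of the HYM equation --- using $\omega_\epsilon\to\omega$ in $C^\infty_{\loc}(\Omega)$ and $h_0$ fixed --- gives uniform $C^\infty_{\loc}(\Omega)$ bounds on $h_\epsilon$, so a subsequence converges in $C^\infty_{\loc}(\Omega)$ to a smooth Hermitian metric $h$ on $E|_\Omega$ whose Chern connection $\nabla$ is integrable and satisfies $\sqrt{-1}\Lambda_\omega F_\nabla=\lambda\Id$ on $\Omega$. For the admissibility condition, the Chern--Weil identity expresses $\int_Y|F_{\nabla_{h_\epsilon}}|_{\omega_\epsilon}^2\,\omega_\epsilon^n$ as a fixed cohomological pairing of characteristic classes of $E$ with $\{\omega_\epsilon\}^{n-2}=\{\omega\}^{n-2}$ plus $|\lambda|^2\rk(E)\int_Y\omega_\epsilon^n$, hence as a constant $C$ independent of $\epsilon$; since $|F_{\nabla_{h_\epsilon}}|_{\omega_\epsilon}^2\omega_\epsilon^n\to|F_\nabla|_\omega^2\omega^n$ locally uniformly on $\Omega$, Fatou's lemma yields $\int_\Omega|F_\nabla|_\omega^2\,\omega^n\le C<\infty$. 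Thus $h$ is an $\omega$-admissible HYM metric on $E$; if $E$ is polystable but not stable, one may instead run the argument on each stable summand. The main obstacle is the third step: the \emph{uniform} Sobolev and $C^0$ control for the HYM metrics along a family $\omega_\epsilon$ degenerating to the singular current $\omega$, for which the bound $0\le a\le 1/2$ on the Monge--Amp\`{e}re density is precisely tailored.
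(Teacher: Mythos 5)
Your skeleton (regularize the density, solve Monge--Amp\`ere by Yau, apply Donaldson--Uhlenbeck--Yau along the family, pass to the limit on $Y\setminus Z$) is the natural one, and several pieces are sound: the $L^p$ integrability of $e^F$ from $a\le 1/2$ and $\codim Z\ge 2$, the $C^\infty_{\loc}$ convergence $\omega_\epsilon\to\omega$ away from $Z$, the invariance of the slope condition since $\{\omega_\epsilon\}=\{\omega\}$, and the Chern--Weil/Fatou argument for admissibility. But the proof has a genuine gap at exactly the point you flag as ``the main obstacle'': the uniform bound on $\|\log\Tr s_\epsilon\|_{L^1(\omega_\epsilon^n)}$, hence the uniform local $C^0$ bound on $s_\epsilon=h_0^{-1}h_\epsilon$. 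You invoke ``the Uhlenbeck--Yau dichotomy,'' but that dichotomy is proved for a \emph{fixed} K\"ahler metric along the continuity path $\Lambda F_h-\lambda+\varepsilon\log h=0$; here the base metric itself degenerates to a singular current. To run the contradiction you would need, uniformly in $\epsilon$: control of $\|\Lambda_{\omega_\epsilon}F_{h_0}\|_{L^q(\omega_\epsilon^n)}$ (not automatic, since $\Lambda_{\omega_\epsilon}\omega_Y$ has no pointwise bound near $Z$ without a second-order estimate there), construction of the weakly holomorphic projection as a weak limit whose convergence you only control away from $Z$ and a possible bubbling set, lower semicontinuity of $\int|\delbar\pi|^2$ along the degenerating family, and the identification of the analytic degree computed against the singular limit $\omega$ with the cohomological slope $\{\omega\}^{n-1}\cdot c_1(\cdot)$. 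None of this is carried out, and it is essentially equivalent in difficulty to the theorem itself.

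For comparison, the paper deliberately avoids proving any uniform $C^0$ bound on the $H_i$. It pulls back to a resolution $\pi:X\to Y$ (so the class becomes nef and big and the degeneration locus becomes a divisor $D$ with section $s_D$), takes Uhlenbeck limits of the normalized HYM \emph{connections} to obtain a limit bundle $E_\infty$ with a $T$-admissible HYM connection on $X\setminus(D\cup\Sigma)$, and extracts only $L^\infty$-normalized limit morphisms $\Phi_\infty,\Psi_\infty$ between $E$ and $E_\infty$ via Moser iteration with the Guo--Phong--Song--Sturm Sobolev inequality. The entire second half of the paper is then devoted to proving that $\Psi_\infty$ is an isomorphism --- first for $\mathcal{O}(ND)$, then for line bundles, then in general rank via a coherent extension of $\Ker\Psi_\infty$ and a destabilizing-subsheaf contradiction, all using $s_D$-weighted integration by parts and the maximum principle. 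In other words, the paper's machinery exists precisely because the estimate you assert is not known to be directly obtainable; your proposal assumes the hardest step rather than proving it.
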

\begin{rema}
As explained below, the present paper basically works on $X$ with $\pi^*\omega$ where $\pi:X\to Y$ is a resolution along $Z$ for the proof of Theorem \ref{main thm}.
Although $Y$ in Theorem \ref{main thm} is smooth, the above idea is essentially the same as working on a log smooth pair $(X,D)$ in the context of the studying singular K\"{a}hler-Einstein metrics on a normal variety $Y$. If $K_X+D$ is ample and the solution $\omega$ to the complex Monge-Amp\`{e}re equation defines a singular K\"{a}hler-Einstein metric on $(X,D)$, then it has been proved that $\omega$ has conical singularities (e.g. \cite{Gue16}, \cite{GW16}). And the Kobayashi-Hitchin correspondence on a compact K\"{a}hler manifold with conical K\"{a}hler metrics has been studied (e.g. \cite{Li00}, see also \cite{JL25}). However, even if $\{\omega\}=c_1(K_Y)$, the pull-back class $\{\pi^*\omega\}=c_1(K_X+D)$ is not ample where $D=-a\Exc(\pi)$ with $a>0$. Thus the existence of a model metric in this setting remains unknown. Moreover, the approach of this paper does not rely on the existence of model metrics. In fact, most parts of the proof in this paper continue to hold for closed positive $(1,1)$-currents representing nef and big classes (Assumption \ref{assumption intro}).
\end{rema}

By \cite{CCHTST25}, we can see that $\omega$ is a K\"{a}hler current on $Y$ and there exists a modification $\pi:X\to Y$ along $Z$ such that $\pi^*\omega$ is approximated by a sequence of smooth K\"{a}hler metrics $\omega_i$ on $X$.
Since the slope stability is invariant under modifications (Lemma \ref{invariance}), it suffices to find a $\pi^*\omega$-admissible HYM metric on $E:=\pi^*F$. 
We focus on the fact that $\pi^*\{\omega\}$ is nef and big and $\pi^*\omega$ is a closed positive $(1,1)$-current which satisfies the complex Monge-Amp\`{e}re equation $(\pi^*\omega)^n=e^f|s_D|^q\omega_X^n$ with $q\ge 1$ (Lemma \ref{regularity of omega}). Then we also consider the following more general setting. 
If we assume that Lemma \ref{uniform estimate intro} holds for positive currents in Assumption \ref{assumption intro}, then the rest of the proof remains valid for such a current.
\begin{assu}\label{assumption intro}
We will consider the following objects:
\begin{enumerate}
\item a compact K\"{a}hler manifold $(X,\omega_0)$,
\item a nef and big class $\alpha$ on $X$ such that $D=E_{nK}(\alpha)$ is a snc divisor,
\item an $\alpha^{n-1}$-slope stable holomorphic hermitian vector bundle $(E,h_0)$ on $X$,
\item a closed positive $(1,1)$-current $T$ in $\alpha$ which satisfies $\langle T^n\rangle = e^f|s_D|^{2q}\omega_0^n$ with $q\ge 0$.
\end{enumerate}
\end{assu}
\noindent We can always find $T\in \alpha$ which satisfies the above condition $(4)$ (Lemma \ref{positive current nef big}).

We now describe the strategy of the proof.
Our approach is inspired by the recent work \cite{Chen} who proved the correspondence on compact normal varieties with smooth K\"{a}hler metrics. 
The major difference from \cite{Chen} is that the analysis is carried out on $X$, rather than on $Y$, by using a positive current $\pi^*\omega$ which is singular and degenerates along the exceptional divisor $D=E_{nK}(\pi^*\omega)$. The codimension 1 singularities of $\pi^*\omega$ and the fact that the class $\{\pi^*\omega\}$ is not K\"{a}hler give new difficulaties, explained after Theorem \ref{toward nef and big class}, which are not present in \cite{Chen}.

At first, we establish the following general result after \cite{Chen}.
\begin{theo}[=Theorem \ref{limiting objects}]\label{toward nef and big class}
Let $X$ be a compact K\"{a}hler manifold and $\alpha$ be a nef and big class on $X$ such that $D=E_{nK}(\alpha)$ is a snc divisor. Let $T$ be a closed positive $(1,1)$-current in $\alpha$ which satisfies Assumption \ref{assumption intro}.
Let $E$ be a holomorphic vector bundle on $X$. If $E$ is $\alpha^{n-1}$-slope stable, then  there exists the following data:
\begin{itemize}
\item  An analytic subset $\Sigma$ in $X\setminus D$ of $\codim \Sigma \ge 2$. We denote by $Z=D\cup\Sigma$.
\item  A  holomorphic vector bundle $E_{\infty}$ on $X\setminus Z$ such that the underlying complex vector bundle of $E_{\infty}$ coincides with $E|_{X\setminus Z}$ {\rm{(\cite{CW22} Corollary 48)}},
\item  A $T$-admissible HYM connection $\nabla_{\infty}$ over $E_{\infty}$ and
\item nontrivial sheaf morphisms  $\Phi_{\infty}:E_{\infty}\to E|_{X\setminus Z}$ and $\Psi_{\infty}: E|_{X\setminus Z}\to E_{\infty}$.
\end{itemize}
\end{theo}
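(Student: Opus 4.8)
The plan is to run the standard "destabilizing subsheaf via Hermitian-Yang-Mills flow / Donaldson functional" machinery, but performed on the Zariski-open set $\Omega = X\setminus D$ where $T$ is a genuine smooth Kähler metric, and then to control the behaviour near $D$ using the Monge–Ampère condition $\langle T^n\rangle = e^f|s_D|^{2q}\omega_0^n$ with $q\ge 0$. Concretely, I would first approximate: by \cite{CCHTST25} (as recalled before Assumption \ref{assumption intro}) choose a sequence of smooth Kähler metrics $\omega_i$ on $X$ with $\omega_i \to T$ in an appropriate sense, all representing classes converging to $\alpha$. For each $i$, apply the classical Donaldson–Uhlenbeck–Yau existence theory (or rather the degenerate/adiabatic version that produces only an approximate solution when $E$ need not be $\{\omega_i\}^{n-1}$-stable) to obtain metrics $h_i$ on $E$ with $\sqrt{-1}\Lambda_{\omega_i}F_{h_i}$ close to $\lambda_i\Id$, where $\lambda_i$ are the topological slopes computed against $\{\omega_i\}$; these $\lambda_i$ converge to the slope $\lambda$ determined by $\alpha$ since $\alpha$ is nef and big. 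The main input I would take from earlier in the paper is Lemma \ref{uniform estimate intro} (referenced in the excerpt), which should provide the uniform a priori $C^0$ (and higher, on compact subsets of $\Omega$) estimates for $h_i$ away from $D$, independent of $i$.

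Second, extract limits. On compact subsets of $\Omega\setminus\Sigma$ — where $\Sigma$ is the codimension-$\ge 2$ locus where the curvatures concentrate, produced by an $\varepsilon$-regularity / monotonicity argument à la Uhlenbeck and the Bando–Siu theory — the uniform estimates give, after passing to a subsequence and modifying by gauge transformations, smooth convergence $h_i \to h_\infty$, hence a smooth $T$-HYM connection $\nabla_\infty$ on a bundle $E_\infty$ over $X\setminus Z$, $Z = D\cup\Sigma$. The identification of the underlying complex bundle of $E_\infty$ with $E|_{X\setminus Z}$, together with the fact that the limiting holomorphic structure may differ from that of $E$, is exactly the content cited from \cite{CW22} Corollary 48; I would invoke it verbatim. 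The $T$-admissibility bound $\int_\Omega |F_{\nabla_\infty}|_T^2\, T^n < \infty$ should follow from lower semicontinuity of the Yang–Mills energy under the convergence, combined with the uniform energy bound $\int |F_{h_i}|^2_{\omega_i}\omega_i^n \le C$ coming from Chern–Weil and the convergence of the relevant intersection numbers (here using that $\alpha$ is nef and big so that $\alpha^n > 0$ and the mixed intersection numbers $c_1(E)\cdot\alpha^{n-1}$, $\mathrm{ch}_2(E)\cdot\alpha^{n-2}$ make sense as limits).

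Third, construct the sheaf morphisms $\Phi_\infty, \Psi_\infty$. The non-triviality is where stability of $E$ enters. On a compact exhaustion of $\Omega\setminus\Sigma$ one considers, for each $i$, the identity map $\id:(E,h_i)\to (E,H)$ for a fixed background metric $H$ (or rather the $\bar\partial$-closed endomorphism-valued objects measuring the difference of holomorphic structures), normalises it in $L^2(\omega_i)$, and uses the Bochner–Kodaira / Weitzenböck inequality together with the near-HYM property to get a uniform bound forcing a non-zero weak limit; this is the standard Uhlenbeck–Yau argument that a non-convergent sequence of metrics produces a coherent destabilising subsheaf, here adapted to produce instead the pair of morphisms between $E$ and the limit bundle $E_\infty$, exactly as in \cite{Chen}. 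Concretely $\Phi_\infty$ is the weak limit of suitably rescaled $\id_E$ viewed from the $h_i$ side, and $\Psi_\infty$ from the other side; that at least one is nonzero follows because otherwise one could push the estimates through and get genuine convergence, contradicting the assumed failure of the naive limit (or, in the case convergence does hold, one simply takes $\Phi_\infty = \Psi_\infty = \id$). The morphisms are a priori defined on $X\setminus Z$ and extend as sheaf morphisms by Hartogs/normality across $\Sigma$ if one wishes.

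The main obstacle, as the authors themselves flag after the statement of Theorem \ref{toward nef and big class}, is the behaviour along the codimension-one divisor $D$ where $T$ both degenerates and has the class $\alpha$ merely nef-and-big rather than Kähler: one cannot a priori rule out that curvature energy escapes into $D$, nor that the morphisms $\Phi_\infty,\Psi_\infty$ degenerate there, and the usual monotonicity formulae are not available across a divisor. Controlling this requires genuinely using the precise form $\langle T^n\rangle = e^f|s_D|^{2q}\omega_0^n$ — i.e. the weight $|s_D|^{2q}$ is integrable and decays in a controlled way — to show that the $T$-volume of a tube around $D$ is small, hence that the energy on such a tube is small (via the a priori energy bound and an appropriate cutoff in the Yang–Mills identity), which is precisely what Lemma \ref{uniform estimate intro} is designed to deliver; I would lean on that lemma as a black box exactly at this point.
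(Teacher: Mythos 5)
Your overall skeleton (approximate $T$ by K\"ahler metrics $\omega_i$, take an Uhlenbeck limit to get $(E_\infty,\nabla_\infty)$ off a codimension-two set, and produce comparison morphisms as limits of normalized identity maps) matches the paper's route, but two steps that carry the real weight of the proof are missing or misidentified.

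First, you never establish that $E$ is stable with respect to the approximating K\"ahler classes, and instead hedge with a ``degenerate/adiabatic version that produces only an approximate solution when $E$ need not be $\{\omega_i\}^{n-1}$-stable.'' No such general existence theory is available for unstable bundles, and without exact $\omega_i$-HYM connections the Uhlenbeck limit need not satisfy the $T$-HYM equation. The paper closes this by proving (Lemma \ref{max slope} and Lemma \ref{approximating stability}) that $\alpha^{n-1}$-stability implies $\alpha_i^{n-1}$-stability for $\alpha_i=\alpha+(1/i)\omega_0$ and $i\gg0$; this is not formal --- it requires showing that the set of slopes of subsheaves of $E$ with respect to classes represented by positive $(2n-2)$-currents is bounded and that the supremum of $\alpha^{n-1}$-slopes is attained, via a rational basis of $H^{2n-2}(X,\mathbb{Q})$ by positive classes. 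Only then does \cite{UY} give genuine $\omega_i$-HYM metrics $h_i=h_0H_i$, and the gauged connections $\nabla_i=H_i^{1/2}\circ\nabla_{h_i}\circ H_i^{-1/2}$ on the fixed hermitian bundle $(\underline{E},h_0)$, with the uniform Chern--Weil energy bound of Lemma \ref{construction of HYM conns}, to which \cite{Nak88}, \cite{CW22} apply.

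Second, your argument for the nontriviality of $\Phi_\infty,\Psi_\infty$ (``otherwise one could push the estimates through and get genuine convergence'') is not a proof, and the black box you lean on is the wrong one: Lemma \ref{uniform estimate intro} concerns the \emph{limit} morphisms and is itself derived from Proposition \ref{L-infty estimate}, so invoking it to produce the limits is circular; it also gives no ``uniform $C^0$ estimate for $h_i$'' --- the $H_i$ are allowed to degenerate, which is exactly why one must work with the normalized maps $\Phi_i=H_i^{-1/2}/\!\int_X|\Phi_i|^2\omega_i^n$. The actual mechanism is: (i) a differential inequality $\Delta_{\omega_i}\log(|\Phi_i|^2+1)\ge -N$ with $N$ independent of $i$ --- this uses that $|\Lambda_{\omega_i}F_{h_0}|$ stays bounded as $\omega_i$ degenerates, which is where the hypotheses $h_0=\pi^*h_F$ and $T\ge C^{-1}\pi^*\omega_Y$ enter and which your proposal does not address; (ii) Moser iteration with the uniform Sobolev inequality of \cite{GPSS23} (Lemma \ref{sob ineq}) to get $\sup_X|\Phi_i|\le C$; and (iii) the observation (Claim \ref{nontrivial limit lemma}) that since $\int_{Z^\varepsilon}\omega_i^n\to\int_{Z^\varepsilon}\langle T^n\rangle\to0$, the normalization $\int_X|\Phi_i|^2\omega_i^n=1$ cannot concentrate on $Z$, so the $L^2_{\rm loc}$ limit is nonzero; holomorphy of the limit then follows by mollification and Montel. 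Your volume non-concentration remark near $D$ points in the right direction, but without the uniform $L^\infty$ bound on the correctly normalized $\Phi_i$ it does not yield nontriviality of the limits.
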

\noindent 
For the proof of Theorem \ref{main thm}, we need to prove that $\Phi_{\infty}:E_{\infty}\to E|_{X\setminus Z}$ and $\Psi_{\infty}: E|_{X\setminus Z}\to E_{\infty}$ are isomorphic. On compact normal varieties with smooth K\"{a}hler metrics, Xuemiao Chen \cite{Chen} resolved the problem by proving that $E_{\infty}$ extends to the whole of $X$ as an application of Siu's Hartogs type extension \cite{Siu75}, in that cases $Z=X_{\sing}\cup \Sigma$ whose hausdorff codimension is at least 4. Main technical difficulties of the present paper arise from this point. Indeed, we cannot apply Siu's extension theorem since the subvariety $D=E_{nK}(\alpha)$, on which $E_{\infty}$ is not defined, has complex codimension 1. 
To resolve this issue, we establish the following global estimates, which valid for $T=\pi^*\omega$ in Theorem \ref{main thm}.
The Sobolev inequality in \cite{GPSS23} (refer to Lemma \ref{sob ineq}) plays an important role.
\begin{lemm}[=Corollary \ref{L-infty estimate of limit object}, Proposition \ref{uniform esti of diff}]\label{uniform estimate intro}
Let $T=\pi^*\omega$ in Theorem \ref{main thm}.
Let $\Phi_{\infty}:E_{\infty}\to E|_{X\setminus Z}$ and $\Psi_{\infty}:E|_{X\setminus Z}\to E_{\infty}$ be nonzero morphisms in Theorem \ref{toward nef and big class}. Let $s_D$ be a defining section of the $\pi$-exceptional divisor $D$. Then there exists a constant $m>0$ such that the followings hold:
\begin{itemize}
\item (Corollary \ref{L-infty estimate of limit object}) $\sup_{X\setminus Z}|\Psi_{\infty}|<\infty$ and $\sup_{X\setminus Z}|\Phi_{\infty}|<\infty$.
\item (Proposition \ref{uniform esti of diff}) $\int_{X\setminus Z}|\nabla_{\infty}(s_D^{2m}\Psi_{\infty})|_T^2T^n<\infty$ and $\int_{X\setminus Z}|\nabla_{\infty}(s_D^{2m}\Phi_{\infty})|_T^2T^n<\infty$.
\end{itemize}
Here the norms of endomorphisms are defined by smooth hermitian metrics on each vector bundles (refer to Proposition \ref{uniform esti of diff} and Corollary \ref{L-infty estimate of limit object} for more precise statements).
\end{lemm}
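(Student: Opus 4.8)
The plan is to derive the two bullet points from a priori growth information on $\Phi_\infty$ and $\Psi_\infty$ near the degeneracy divisor $D$, combined with the Sobolev inequality of \cite{GPSS23} and a Moser-type iteration for the HYM connection $\nabla_\infty$. The starting observation is that $\Psi_\infty$ and $\Phi_\infty$ are holomorphic (in the appropriate sense) sheaf morphisms between $E_\infty$ and $E|_{X\setminus Z}$, hence their pointwise norms $|\Psi_\infty|$, $|\Phi_\infty|$ (measured with $\nabla_\infty$ on $E_\infty$ and $h_0$ on $E$) satisfy differential inequalities of the form $\Delta_T |\Psi_\infty|^2 \ge -C|\Psi_\infty|^2$ away from $Z$, because $\nabla_\infty$ is HYM and $h_0$ has bounded curvature on $X\setminus Z$ relative to $\omega_0$; the degeneration of $T$ along $D$ enters only through the Laplacian $\Delta_T$ and the volume form $T^n = e^f|s_D|^{2q}\omega_0^n$.

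First I would establish the $L^\infty$ bounds (Corollary \ref{L-infty estimate of limit object}). The key is that $\Psi_\infty \in L^2(X\setminus Z, T^n)$ — this should follow from the construction in Theorem \ref{toward nef and big class} (the morphisms arise as limits of sections normalized in $L^2$) — together with the Sobolev inequality for $(X\setminus Z, T)$ from Lemma \ref{sob ineq}. Running Moser iteration on the subsolution $|\Psi_\infty|$ with respect to $\Delta_T$, using that the Sobolev constant is uniform, upgrades the $L^2$ bound to an $L^\infty$ bound; the cutoffs needed to excise the codimension-$\ge 2$ set $\Sigma$ cost nothing because of capacity considerations, and the divisor $D$ is handled by the fact that $T$ is still a genuine positive current there with controlled volume ($q\ge 1$ in Theorem \ref{main thm}). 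The same argument applies verbatim to $\Phi_\infty$.

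Next, for the energy bounds (Proposition \ref{uniform esti of diff}), I would integrate by parts. Write $\sigma = s_D^{2m}\Psi_\infty$; then $\nabla_\infty \sigma = 2m\,s_D^{2m}\frac{\nabla s_D}{s_D}\otimes \Psi_\infty + s_D^{2m}\nabla_\infty\Psi_\infty$. Since $\Psi_\infty$ is $\delbar^{\nabla_\infty}$-holomorphic, $\delbar^{\nabla_\infty}\Psi_\infty = 0$, so $\nabla_\infty\Psi_\infty = \partial^{\nabla_\infty}\Psi_\infty$ is of type $(1,0)$, and the Bochner/Weitzenböck identity for HYM connections gives $\int |\partial^{\nabla_\infty}\Psi_\infty|_T^2\, T^n$ controlled by $\int |\Psi_\infty|^2\, T^n$ plus boundary terms, provided we can justify the integration by parts on $X\setminus Z$. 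The factor $s_D^{2m}$ with $m$ large is precisely what is needed to kill the boundary contributions along $D$: the logarithmic-type singularity of $\nabla_\infty$ and of the Chern connection of $h_0$ along $D$ (coming from the fact that $E_\infty$ need not extend across $D$) is polynomially dominated once $m$ exceeds a threshold determined by $q$ and the codimension-1 geometry, and the $L^\infty$ bound from the first step makes the zeroth-order term $\int s_D^{4m}|\Psi_\infty|^2\, T^n$ manifestly finite. The cutoff near $\Sigma$ again disappears in the limit by the standard codimension-$\ge 2$ capacity argument.

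The main obstacle I anticipate is justifying these manipulations across $D$: unlike in \cite{Chen}, where the bad set has codimension $\ge 2$ and Siu's extension applies, here $D$ is a divisor and neither $E_\infty$ nor $\nabla_\infty$ extends across it, so one must track carefully the rate at which $|F_{\nabla_\infty}|_T$ and the transition functions of $E_\infty$ blow up near $D$, and show this rate is only polynomial in $1/|s_D|$ — strong enough that multiplying by $s_D^{2m}$ for suitable $m$ restores integrability and legitimizes the integration by parts (approximating $X\setminus Z$ by sublevel sets $\{|s_D|>\varepsilon\}\setminus (\text{tube around }\Sigma)$ and controlling the boundary integrals as $\varepsilon\to 0$). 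Establishing that polynomial growth is exactly where the hypothesis $q\ge 1$ and the explicit form $T^n = e^f|s_D|^{2q}\omega_0^n$ in Assumption \ref{assumption intro}(4) — equivalently the bound $a\le 1/2$ in Theorem \ref{main thm} — will be used.
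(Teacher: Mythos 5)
The central gap is structural: you work directly with the limit objects $\Phi_{\infty},\Psi_{\infty},\nabla_{\infty}$ on the noncompact set $X\setminus Z$ with the degenerate current $T$, whereas the paper proves both estimates \emph{uniformly for the smooth approximants} $\Phi_i=H_i^{-1/2}$ (normalized in $L^2(\omega_i^n)$) with respect to the K\"ahler metrics $\omega_i$ on the \emph{compact} manifold $X$, and only then lets $i\to\infty$. This is not cosmetic; it is what makes the two points you yourself flag as ``the main obstacle'' disappear. First, your differential inequality $\Delta_T|\Psi_{\infty}|^2\ge -C|\Psi_{\infty}|^2$ is not available with a uniform constant: the zeroth-order term involves $\Lambda_TF_{h_0}$, and since $T$ degenerates like $|s_D|^{2m}\omega_0$ along $D$, boundedness of $F_{h_0}$ relative to $\omega_0$ only gives $|\Lambda_TF_{h_0}|\le C|s_D|^{-2m}$, which is unbounded along the codimension-one set $D$. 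The paper's Proposition \ref{L-infty estimate} obtains the uniform bound $\Delta_{\omega_i}\log(|\Phi_i|^2+1)\ge -N$ precisely because $h_0=\pi^*h_F$ and $\omega_i\gtrsim \pi^*\omega_Y$, so that $|\Lambda_{\omega_i}F_{h_0}|\le |\Lambda_{\omega_Y}F_{h_F}|$ --- an essential hypothesis (explicitly flagged in the paper) that your argument never invokes. The Moser iteration is then run with the uniform Sobolev inequality for $(X,\omega_i)$ on the closed manifold $X$, so no cutoffs near $D$ are needed; your proposed iteration on $X\setminus Z$ would require cutoff gradient terms measured in $T$, which blow up like $|s_D|^{-2m}$ along $D$ and cannot be discarded by a capacity argument, since $D$ has codimension one.

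Second, for the energy estimate your plan hinges on showing that $|F_{\nabla_{\infty}}|_T$ blows up at most polynomially in $1/|s_D|$ near $D$ so as to justify the integration by parts on $X\setminus Z$. No such pointwise estimate is proved or available --- admissibility only gives $F_{\nabla_{\infty}}\in L^2(T^n)$ --- and producing one would be a substantial separate problem. The paper's Proposition \ref{uniform esti of diff} instead integrates by parts for $s_D^{2m}\Phi_i$ against $\omega_i^n$ over all of the compact $X$ (hence with no boundary terms), uses $\delbar^{\nabla_i}(s_D^{2m}\Phi_i)=0$ and the Bochner--Kodaira identity to reduce everything to contracted curvatures, where the HYM part is the bounded constant $\lambda_i$ and the fixed-metric part $\Lambda_{\omega_i}\omega_0\le C|s_D|^{-2m}$ is exactly absorbed by the weight $|s_D|^{2m}$; the exponent $m$ comes from the lower bound $\omega_i\ge C|s_D|^{2m}\omega_0$ of Lemma \ref{positive current nef big}, not from the exponent $q$ of the volume form as you suggest. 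The uniform bound then passes to the limit by local smooth convergence. To repair your write-up you would have to either prove the pointwise curvature control near $D$ that you only postulate, or restructure the whole argument around the approximants as the paper does.
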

Assume that Lemma \ref{uniform estimate intro} holds for a closed positive $(1,1)$-current $T$ in a nef and big class $\alpha$ in Assumption \ref{assumption intro}, and prove the existence of a $T$-admissible HYM metric on an $\alpha^{n-1}$-slope stable vector bundle $E$, which implies Theorem \ref{main thm}.
The proof of Theorem \ref{main thm} is divided in 3-steps: [1] (Proposition \ref{KH corr on nK locus}) We prove the existence of a $T$-admissible HYM metric on $\mathcal{O}(ND)$ where $N>0$ is a large constant and $D=E_{nK}(\alpha)$. Next we show [2] (Theorem \ref{KH corr for rank1}) the existence of a $T$-admissible HYM metric on any holomorphic line bundle. Finally we show [3] (Theorem \ref{KH corr nef and big}) any $\alpha^{n-1}$-slope stable holomorphic vector bundle admits a $T$-admissible HYM metric. All these steps are carried out by proving that $\Psi_{\infty}:E|_{X\setminus Z}\to E_{\infty}$ is isomorphic. In the rank 1 cases [1] and [2], the proofs rely on the maximum principle together with Lemma \ref{uniform estimate intro}. The general case [3] is proved by an argument of contradiction: assuming that $\Psi_{\infty}$ is not of full rank, its kernel produces a destabilizing subsheaf of $E$, contradicting the stability of $E$. The main difficulty here is that the kernel of $\Psi_{\infty}$ is a priori defined only on $X\setminus Z$. Hence we first construct a coherent extension of the kernel over $X$ and then apply the maximum principle to conclude that $\Psi_{\infty}$ is indeed isomorphic.

\section*{Acknowledgment}
The author would like to thank his supervisor Ryushi Goto for his advice and critical comments. He also thanks the organizers of the workshop ``Canonical Metrics on K\"{a}hler Manifolds and Related Topics 6'' for giving me the opportunity to study and discuss the related topics. The author would like to thank Prof. Yuji Odaka for helpful discussions, and to Prof. Yoshinori Hashimoto for suggesting the Uhlenbeck compactness to solve the problem and discussions. The author would also thank Prof. Masataka Iwai for the advice and comments.
The author is grateful to Prof. Yuta Watanebe and Rei Murakami for fruitful discussions and important comments.

\section{Preliminaries}
\subsection{closed positive $(1,1)$-currents and big cohomology classes}
The main references of this subsection are \cite{Dem1} and \cite{Dem2}.
Let $X$ be a compact K\"{a}hler manifold of complex dimension $n$. 
The vector space of smooth $(p,q)$-forms with local $C^{\infty}$ topology is denoted by $A^{p,q}(X)$. Then a bidegree $(p,q)$-current is a continuous linear functional $T$ on $A^{n-p,n-q}(X)$. An $(n,n)$-current $T$ on $X$ is positive if, for any nonnegative function $f$ on $X$, the pairing $T(f)$ is nonnegative. And a $(p,p)$-current $T$ is positive if $T$ is real and, for any $(1,0)$-forms $\eta_1,\ldots,\eta_{n-p}$, the $(n,n)$-current $T\wedge(\sqrt{-1}\eta_1\wedge\overline{\eta_1})\wedge\cdots\wedge(\sqrt{-1}\eta_{n-p}\wedge\overline{\eta_{n-p}})$ is positive.
Any positive $(p,p)$-current is locally expressed by a differential form with coefficients of complex Radon measures. Hence the pairing $T(\eta)$ is denoted by the integral $\int_XT\wedge\eta$. 
The vector space of $(p,q)$-currents on $X$ with $\delbar$ defines a complex. The cohomology group of the complex of currents is isomorphic to the Dolbeault cohomology group. Hence, the following definition make sense. We only note the definition for compact K\"{a}hler manifolds. The reader can refer to \cite{IJZ25} for the definition for singular and non K\"{a}hler spaces.
\begin{defi}\label{big defi}
Let $(X,\omega)$ be a compact K\"{a}hler manifold and $\alpha\in H^{1,1}(X,\mathbb{R})$ be a cohomology class. Then we define as follows.
\begin{enumerate}
\item The class $\alpha$ is pseudo-effective if $\alpha$ is represented by a closed positive $(1,1)$-current.
\item The class $\alpha$ is big if $\alpha$ is represented by a K\"{a}hler current. Here a K\"{a}hler current is a closed positive $(1,1)$-current $T$ such that $T\ge \varepsilon\omega$.
\item The class $\alpha$ is nef if, for any $\varepsilon>0$, there is a smooth $(1,1)$-form $\alpha_{\varepsilon}$ in $\alpha$ such that $\alpha_{\varepsilon}\ge -\varepsilon\omega$.
\end{enumerate}
\end{defi}
Let $\theta$ be a smooth closed $(1,1)$-form on $X$. Then a $\theta$-(resp.strongly) plurisubharmonic function is a locally integrable and upper semicontinuous function $\varphi$ on $X$ such that $\theta+dd^c\varphi$ is a positive $(1,1)$-current (resp. K\"{a}hler current). On a compact K\"{a}hler manifold $X$, any closed positive $(1,1)$-current (resp.K\"{a}hler current) in a pseudo-effective class (resp.big class) $\alpha$ is always expressed by $T=\theta+dd^c\varphi$ where $\theta\in \alpha$ is smooth and $\varphi$ is a $\theta$-(resp. strongly) psh. We call the $\varphi$ by a potential of $T$.

\subsection{nonpluripolar products and positive products}\label{nonpluripolar product section}
The results of this section are of \cite{Bou04} and \cite{BEGZ}. For singular settings, we can refer to \cite{IJZ25}. Let $X$ be a compact K\"{a}hler manifold.
If potentials of closed positive $(1,1)$-currents $T_1,\ldots,T_k$ are locally bounded, then Bedford-Taylor \cite{BT} proved that the wedge product $T_1\wedge\ldots\wedge T_k$ is a well-defined closed positive $(k,k)$-current. The notion of nonpluripolar product is a generalization of Bedford-Taylor's products for general closed positive $(1,1)$-currents.\\
Let $\theta$ be a smooth closed $(1,1)$-form, $\varphi_1$ and $\varphi_2$ be $\theta$-plurisubharmonic functions on $X$. Then we define that $\varphi_1$ is {\it{less singular}} than $\varphi_2$ if $\varphi_2\le \varphi_1 + O(1)$. 
Let $\alpha$ be a pseudo-effective class on $X$. For closed positive $(1,1)$-currents $T_1=\theta+dd^c\varphi_1$ and $T_2=\theta+dd^c\varphi_2$ in $\alpha$, we define that $T_1$ is {\it{less singular}} than $T_2$ if $\varphi_1$ is less singular than $\varphi_2$.
A closed positive $(1,1)$-current $T$ in $\alpha$ is said to has {\it{minimal singularities}} if $T$ is less singular than any other closed positive $(1,1)$-currents in $\alpha$.
 We often denote by $T_{\min}$ a closed positive $(1,1)$ current in $\alpha$ with minimal singularities. 
\begin{defi}[\cite{Bou04}]
Let $X$ be a compact K\"{a}hler manifold and $\alpha$ be a pseudo-effective class. The {\it{ample locus}} of $\alpha$, denoted by $\Amp(\alpha)$, is defined as follows:
$$
\Amp(\alpha)\\
:=\{x\in X \mid \hbox{$\alpha$ admits a K\"{a}hler current which is smooth around $x$.} \}.
$$
The {\it{nonK\"{a}hler locus}} is defined as $E_{nK}(\alpha):=X\setminus \Amp(\alpha)$.
\end{defi}
\noindent The ample locus of $\alpha$ is non empty if and only if $\alpha$ is big. \\
The {\it{Lelong number}} of a plurisubharmonic function $\varphi$ on $\Omega\subset \mathbb{C}^N$ at $x\in \Omega$ is defined as
$$
\nu(\varphi, x):=\liminf_{z\to x}\frac{\varphi(z)}{\log|z-x|}.
$$
Then the {\it{Lelong number of a closed positive $(1,1)$-current $T$ at $x$}} is defined as $\nu(T, x):=\nu(\varphi,x)$ where $\varphi$ is a potential of $T$.
The {\it{minimal multiplicity}} of a big class $\alpha$ is defined \cite{Bou04} as $\nu(\alpha, x):=\nu(T_{\min}, x)$. In \cite{Bou04}, the following characterization of the nefness of a big class was proved:
\begin{prop}[\cite{Bou04}]
Let $X$ be a compact K\"{a}hler manifold and $\alpha$ be a big class on $X$. Then $\alpha$ is nef if and only if $\nu(\alpha, x)=0$ for any $x\in X$. 
\end{prop}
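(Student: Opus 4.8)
The plan is to prove the two implications separately; the only substantive tool is Demailly's regularization of closed positive currents. First, suppose $\alpha$ is nef and show $\nu(\alpha,x)=0$ for all $x$. Fix a K\"{a}hler form $\omega$ and a smooth representative $\theta\in\alpha$; since $\alpha$ is big, pick a K\"{a}hler current $\Theta_0=\theta+dd^c v\in\alpha$ with $\Theta_0\ge c_0\omega$ for some $c_0>0$, and recall that $\nu(\Theta_0,x)<\infty$ for every $x$ because closed positive $(1,1)$-currents have finite Lelong numbers. By nefness, for each small $\delta>0$ there is a smooth form $\theta_\delta=\theta+dd^c u_\delta\in\alpha$ with $\theta_\delta\ge-\delta\omega$. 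Taking $\lambda=\lambda(\delta)\in\big(\tfrac{\delta}{c_0+\delta},1\big)$ with $\lambda(\delta)\to0$ as $\delta\to0$, the function $w_\delta:=(1-\lambda)u_\delta+\lambda v$ satisfies $\theta+dd^c w_\delta=(1-\lambda)\theta_\delta+\lambda\Theta_0\ge\big(\lambda c_0-(1-\lambda)\delta\big)\omega>0$, so $R_\delta:=\theta+dd^c w_\delta$ is a K\"{a}hler current in $\alpha$. As $T_{\min}$ has minimal singularities, its potential dominates $w_\delta$ up to a bounded term, hence $\nu(T_{\min},x)\le\nu(R_\delta,x)$; then by additivity and homogeneity of Lelong numbers together with $\nu(u_\delta,\cdot)\equiv0$,
\[
\nu(\alpha,x)=\nu(T_{\min},x)\le\nu(R_\delta,x)=\lambda(\delta)\,\nu(v,x)\longrightarrow 0\qquad(\delta\to0)
\]
for every fixed $x$, so $\nu(\alpha,x)=0$.

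For the converse, assume $\nu(\alpha,x)=0$ for all $x$ and let $T_{\min}=\theta+dd^c\varphi\in\alpha$ be a closed positive current with minimal singularities. By Demailly's regularization theorem for closed positive currents on the compact K\"{a}hler manifold $X$, there is a sequence of closed currents $T_k=\theta+dd^c\varphi_k\in\alpha$ with analytic singularities such that $\nu(T_k,x)\le\nu(T_{\min},x)$ for every $x$ and $T_k\ge-\lambda_k\omega$ for continuous functions $\lambda_k$ decreasing pointwise to $\nu(T_{\min},\cdot)\equiv0$; by Dini's theorem $\varepsilon_k:=\sup_X\lambda_k\to0$. Since $\nu(T_{\min},\cdot)\equiv0$, we get $\nu(T_k,x)=0$ for all $x$; but a function with analytic singularities and identically vanishing Lelong numbers is in fact smooth, because locally it has the form $\tfrac{c}{2}\log\big(|g_1|^2+\cdots+|g_N|^2\big)+C^\infty$ with $c\ge0$, and a zero Lelong number at $x$ forces $c=0$ or $g_j(x)\ne0$ for some $j$, so the logarithmic term is smooth near $x$. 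Hence each $T_k$ is a smooth closed real $(1,1)$-form in $\alpha$ with $T_k\ge-\varepsilon_k\omega$ and $\varepsilon_k\to0$, so $\alpha$ is nef.

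The main obstacle is the converse direction, where one must use the \emph{sharp} form of Demailly's regularization: both that the loss of positivity is controlled pointwise by the Lelong numbers of $T_{\min}$ (so that Dini's theorem yields $T_k\ge-\varepsilon_k\omega$ with $\varepsilon_k\to0$) and that the Lelong numbers of $T_k$ themselves are dominated by those of $T_{\min}$ (so that the vanishing hypothesis forces the regularizations to be honestly smooth rather than merely ``almost smooth''). If in a given formulation one only knows $\nu(T_k,x)\le\nu(T_{\min},x)+o(1)$, an additional diagonal approximation is required to conclude. The first implication, by contrast, is a soft convexity estimate using only that bigness of $\alpha$ supplies a strictly positive current with (necessarily) finite Lelong numbers.
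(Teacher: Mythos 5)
The paper offers no proof of this proposition---it is quoted from \cite{Bou04}---so there is nothing internal to compare against; your argument is correct and is essentially the one in the cited reference. The forward direction (convex combination of an almost-positive smooth representative with a fixed K\"ahler current, then additivity and homogeneity of Lelong numbers plus minimality of $T_{\min}$) is clean, and the converse correctly invokes the sharp form of Demailly's regularization, in which the loss of positivity is controlled pointwise by continuous functions decreasing to $\nu(T_{\min},\cdot)$ (so Dini applies) and the regularized potentials are smooth wherever the Lelong number of $T_{\min}$ is below the cutoff---here everywhere---which gives smooth forms $\ge-\varepsilon_k\omega$ in $\alpha$ and hence nefness in the sense of Definition 2.2.
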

\noindent We remark that above proposition does {\it{not}} imply that a closed positive $(1,1)$-current with minimal singularities in a nef and big class has locally bounded potentials. We refer to \cite{BEGZ} Example 5.4 for an example of a nef and big class whose closed positive $(1,1)$-currents with minimal singularities admit poles.

Boucksom \cite{Bou04} proved that the following property:
\begin{prop}[\cite{Bou04}]\label{kahler current nonkahler locus}
Let $X$ be a compact K\"{a}hler manifold and $\alpha$ be a big class on $X$. Then there exists a K\"{a}hler current $T$ with analytic singularities in $\alpha$ such that $E_+(T):=\{x\in X\mid \nu(T, x)>0\}=E_{nK}(\alpha)$.
In particular, the nonK\"{a}hler locus $E_{nK}(\alpha)$ is a proper analytic subset of $X$.
\end{prop}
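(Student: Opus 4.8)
The plan is to identify $E_{nK}(\alpha)$ with the common positive-Lelong locus of \emph{all} K\"{a}hler currents with analytic singularities in $\alpha$, and then to collapse this a priori infinite intersection to a single current using the Noetherian property of analytic subsets together with the behaviour of Lelong numbers under $\max$. The first preliminary step is to record the local dictionary for analytic singularities: if $S=\theta+dd^c\psi\in\alpha$ is a K\"{a}hler current with analytic singularities, written locally as $\psi=\tfrac{c}{2}\log(|g_1|^2+\cdots+|g_N|^2)+u$ with $c>0$, $g_j$ holomorphic and $u$ smooth, then $\nu(S,x)=c\cdot\min_j\mathrm{ord}_x(g_j)$; hence $\nu(S,x)>0$ exactly when all $g_j$ vanish at $x$, so $E_+(S)$ is the analytic set locally cut out by $(g_1,\ldots,g_N)$, it is a proper analytic subset of $X$ (proper because $S$ is a genuine current), and $S$ is \emph{smooth} precisely on $X\setminus E_+(S)$.

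Next I would prove the identity
\[
E_{nK}(\alpha)\;=\;\bigcap_{S}E_+(S),
\]
the intersection taken over all K\"{a}hler currents $S$ with analytic singularities in $\alpha$ (fixing once and for all a smooth reference form $\theta\in\alpha$). The inclusion ``$\subseteq$'' uses the previous step: every such $S$ is non-smooth, hence has strictly positive Lelong number, at each point of $E_{nK}(\alpha)$. For ``$\supseteq$'', if $x\in\Amp(\alpha)$ there is, by definition, a K\"{a}hler current in $\alpha$ smooth near $x$; applying Demailly's regularization theorem (cf. \cite{Dem1}) we may replace it by one with analytic singularities that is still strictly positive --- the regularization loss $-\varepsilon\omega$ being absorbed by the strict lower bound of the K\"{a}hler current for $\varepsilon$ small --- and without creating singularities where the original was smooth; thus $x\notin E_+(S)$ for this $S$.

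To conclude, each $E_+(S)$ is a closed analytic subset, so the Noetherian property of analytic subsets of $X$ yields finitely many $S_1,\ldots,S_m$ with $\bigcap_{i=1}^mE_+(S_i)=E_{nK}(\alpha)$. Writing $S_i=\theta+dd^c\psi_i$ with the common $\theta$, I set $\psi:=\max(\psi_1,\ldots,\psi_m)$ (or its regularized version) and $T:=\theta+dd^c\psi$. Since $\theta+dd^c\psi_i\ge(\min_i\varepsilon_i)\,\omega$ for every $i$, the same lower bound passes to $T$, so $T$ is a K\"{a}hler current in $\alpha$; $\psi$ has analytic singularities; and from $\nu(\psi,x)=\min_i\nu(\psi_i,x)$ one gets $E_+(T)=\bigcap_{i=1}^mE_+(S_i)=E_{nK}(\alpha)$. (If the strict normal form of Demailly's analytic singularities is wanted, one more application of the regularization theorem to $T$ does it, strict positivity again absorbing the loss and the positive Lelong locus being unchanged.) The ``in particular'' clause is then immediate: $E_{nK}(\alpha)=E_+(T)$ is analytic, and proper because $\alpha$ big forces $\Amp(\alpha)\neq\emptyset$.

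I expect the only genuine obstacle to lie in the second paragraph --- namely, having available Demailly's regularization theorem in the precise form that converts an arbitrary K\"{a}hler current into one with analytic singularities while preserving a strictly positive lower bound and not introducing singularities at points where the original current was smooth. Granting that (a deep but standard input) together with the elementary local dictionary of the first step, both inclusions are forced, and the final collapse in the third paragraph is soft, resting only on Noetherianity and the identity $\nu(\max_i\psi_i,\cdot)=\min_i\nu(\psi_i,\cdot)$.
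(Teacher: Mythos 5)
The paper offers no proof of this proposition: it is quoted verbatim from \cite{Bou04}, and your argument is essentially a reconstruction of Boucksom's original proof --- identify $E_{nK}(\alpha)$ with $\bigcap_S E_+(S)$ over K\"ahler currents $S$ with analytic singularities in $\alpha$ (Demailly regularization supplying both the nonemptiness of this family and the nontrivial inclusion), collapse to a finite intersection by the strong Noetherian property of analytic subsets, and take the maximum of the potentials. The only point deserving care is your assertion that $\max(\psi_1,\dots,\psi_m)$ ``has analytic singularities'': when the coefficients $c_i$ of the local models differ, the maximum need not be of the strict normal form $\tfrac{c}{2}\log\sum|g_j|^2+O(1)$. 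Your parenthetical fix does repair this, but for a reason you should make explicit: regularizing $T=\theta+dd^c\max_i\psi_i$ produces a K\"ahler current $S\in\alpha$ with analytic singularities and $\nu(S,\cdot)\le\nu(T,\cdot)$, hence $E_+(S)\subseteq E_+(T)=E_{nK}(\alpha)$, while the reverse inclusion $E_{nK}(\alpha)\subseteq E_+(S)$ holds automatically because $S$ is itself a member of the family being intersected --- so the positive Lelong locus is ``unchanged'' not because regularization preserves it in general (it can only shrink it) but because it is already minimal. With that clarification the argument is correct.
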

By applying the Siu decomposition to $T$ in Proposition \ref{kahler current nonkahler locus}, we obtain the following lemma which is a special type of approximate Zariski decompositions.
\begin{lemm}
Let $X$ be a compact K\"{a}hler manifold and $\alpha$ be a nef and big class such that $D=E_{nK}(\alpha)$ is a snc divisor. Then there exists a K\"{a}hler metric $\omega_0$ and an effective $\mathbb{R}$-divisor $D'$ such that $\alpha=\{\omega_0\}+[D']$ holds and $\Supp(D')=E_{nK}(\alpha)$.
\end{lemm}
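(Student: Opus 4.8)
The plan is to read off both $D'$ and $\omega_0$ from the Siu decomposition of a suitably chosen K\"{a}hler current in $\alpha$. Since $\alpha$ is big, Proposition \ref{kahler current nonkahler locus} provides a K\"{a}hler current $T=\theta+dd^c\varphi$ in $\alpha$ with analytic singularities (so $\theta\in\alpha$ is smooth, $\varphi$ is quasi-psh, and locally $\varphi=\tfrac c2\log(\textstyle\sum_j|g_j|^2)+O(1)$ with the $g_j$ holomorphic), with $T\ge\varepsilon\omega$ for some K\"{a}hler metric $\omega$ and $\varepsilon>0$, and with $E_+(T)=E_{nK}(\alpha)=D$. Writing $D=\sum_{k=1}^N D_k$ for the decomposition into (smooth, transversally meeting) irreducible components, the Siu decomposition of $T$ reads $T=\sum_k a_k[D_k]+R$, where $a_k=\nu(T,D_k)$ is the generic Lelong number of $T$ along $D_k$ and $R$ is a closed positive $(1,1)$-current. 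I would then check that $a_k>0$ for every $k$: since $D_k\subseteq E_+(T)$, every point of $D_k$ has positive Lelong number, so by the upper semicontinuity of $x\mapsto\nu(T,x)$ together with the irreducibility of $D_k$ the generic value $a_k=\nu(T,D_k)$ is positive; conversely, a prime divisor $W$ with $\nu(T,W)>0$ satisfies $W\subseteq E_+(T)=D$ and hence is one of the $D_k$. Thus $D':=\sum_k a_k D_k$ is an effective $\mathbb R$-divisor with $\Supp D'=\bigcup_k D_k=D=E_{nK}(\alpha)$, and $R$ represents the class $\beta:=\alpha-[D']$.

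It remains to see that $R$ can be taken to be a smooth K\"{a}hler form, for then $\omega_0:=R$ gives $\alpha=\{\omega_0\}+[D']$ and we are done. Away from $D$ the integration currents $[D_k]$ vanish, so $R=T$ there; since $T$ has analytic singularities with $E_+(T)=D$, it is smooth on $X\setminus D$, hence so is $R$, and $R=T\ge\varepsilon\omega$ on $X\setminus D$. As $D$ is analytic, hence Lebesgue-null, once $R$ is known to be globally smooth the inequality $R\ge\varepsilon\omega$ extends to all of $X$ by continuity. So everything reduces to the smoothness of $R$ across $D$, equivalently to $\beta$ being a genuine K\"{a}hler class and not merely big. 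This is exactly where the snc hypothesis is essential: the singular ideal $\mathcal J$ of $T$ has $V(\mathcal J)=D$, and since $\mathcal I_D$ is radical and locally principal one has $\mathcal I_D^m\subseteq\mathcal J\subseteq\mathcal I_D$ for some $m\ge1$, so the ``non-divisorial part'' of $\mathcal J$ is supported in the codimension $\ge2$ set $\Sing D=\bigcup_{i\ne j}(D_i\cap D_j)$. After replacing $T$ by a K\"{a}hler current in $\alpha$ with \emph{divisorial} analytic singularities along $D$ (potential locally $\sum_k a_k\log|z_k|^2+(\text{smooth})$ with $z_k$ a local equation of $D_k$) -- whose existence I expect can be arranged precisely because $E_{nK}(\alpha)$ is already snc, so that no modification is needed to principalize the singularities -- Poincar\'{e}--Lelong shows that $R=T-\sum_k a_k[D_k]$ is locally a smooth form, hence smooth on $X$.

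The main obstacle is precisely this last step: upgrading the K\"{a}hler current of Proposition \ref{kahler current nonkahler locus} to one with divisorial analytic singularities along $E_{nK}(\alpha)$. I would attack it through Demailly's regularization -- first peeling off the divisorial part (which is the Siu decomposition above) and then regularizing the residual class $\beta$, whose remaining singularities sit in the codimension $\ge2$ set $\Sing D$; here the nefness of $\alpha$ should be what forces those residual singularities to be removable. Alternatively one can invoke the known fact that a big class whose non-K\"{a}hler locus is contained in an snc divisor admits an (approximate) Zariski decomposition of exactly this divisorial shape. The rest of the argument -- the Siu bookkeeping for $D'$ and the measure-zero/continuity step for $\omega_0$ -- is routine.
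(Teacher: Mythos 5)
Your approach---apply the Siu decomposition to the K\"{a}hler current with analytic singularities furnished by Proposition \ref{kahler current nonkahler locus}---is exactly the route the paper takes; in fact the paper's entire justification of this lemma is the one introductory sentence invoking the Siu decomposition, with no further argument. Your bookkeeping for $D'$ is correct: since the singularity ideal $\mathcal{J}$ of $T$ satisfies $V(\mathcal{J})=D$, it is contained in each $\mathcal{I}_{D_k}$, so every generator vanishes along $D_k$ and the generic Lelong number $a_k=\nu(T,D_k)$ is bounded below by the positive coefficient of the analytic singularities; conversely any prime divisor with positive generic Lelong number lies in $E_+(T)=E_{nK}(\alpha)$. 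Hence $D'=\sum_k a_kD_k$ is effective with $\Supp D'=E_{nK}(\alpha)$, as you say.

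The step you flag as the main obstacle is a genuine one, and the paper does not address it either. The residual part $R=T-\sum_ka_k[D_k]$ does inherit $R\ge\varepsilon\omega$ (the standard fact that the residue of the Siu decomposition dominates any smooth form dominated by $T$), but its potential can retain analytic singularities along a codimension-$\ge2$ subset of $D$: e.g.\ locally $\mathcal{J}=(z_1^2,\,z_1z_2^{10})$ has $V(\mathcal{J})=\{z_1=0\}$ while the residual potential $\tfrac{c}{2}\log(|z_1|^2+|z_2|^{20})$ is still singular along $\{z_1=z_2=0\}$. So $R$ is a priori only a K\"{a}hler current whose class $\alpha-[D']$ is big with non-K\"{a}hler locus of codimension $\ge2$; the lemma asserts the strictly stronger statement that this class is K\"{a}hler, and some argument (your ``divisorial analytic singularities'' arrangement, or an appeal to the Demailly--P\u{a}un numerical criterion / Collins--Tosatti for the residual class) is needed and is supplied neither by you nor by the paper. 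In short, you have reproduced the paper's proof and, beyond that, correctly isolated the one point it leaves unjustified; you should not feel you missed an argument that the paper actually contains.
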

Since a big class $\alpha$ contains a K\"{a}hler current $T$ which is smooth K\"{a}hler on $\Amp(\alpha)$, we obtain the important observation: {\it{any local potentials of a closed positive $(1,1)$-current $T_{\min}\in\alpha$ with minimal singularities are locally bounded on $\Amp(\alpha)$}}.
We say that a closed positive $(1,1)$-current $T$ on $X$ has a {\it{small unbounded locus}} if the unbounded locus of the local potentials of $T$ is contained in a proper analytic subset in $X$. In particular, any closed positive $(1,1)$-current with minimal singularities in a big class has small unbounded locus. Then the nonpluripolar product of a closed positive $(1,1)$-current with small unbounded locus is defined as follows. The readers can refer to \cite{BEGZ} for more general definition:
\begin{defi}[\cite{BEGZ}]\label{nonpluripolar product}
Let $X$ be a compact K\"{a}hler manifold. Let $T$ be a closed positive $(1,1)$-current whose unbounded locus is contained in a proper analytic subset $V$ in $X$. Then, the $p$-th {\it{nonpluripolar product}} of $T$ is a closed positive $(p,p)$-current on $X$ defined as follows:
$$
\langle T^p\rangle:=\mathbb{1}_{X\setminus V}T^p,
$$
here $\mathbb{1}_{X\setminus V}$ is the defining function of $X\setminus V$. 
\end{defi}
\begin{defi}[\cite{BEGZ}]
Let $X$ be a compact K\"{a}hler manifold and $\alpha$ be a big class on $X$. The {\it{positive product}} of $\alpha$ is a cohomology class defined as follows:
$$
\langle\alpha^p\rangle=\{\langle T_{\min}^p\rangle\}
$$ 
where $T_{\min}$ is a closed positive $(1,1)$-current with minimal singularities in $\alpha$.
\end{defi}
\noindent If $\alpha$ is nef and big, then $\langle\alpha^p\rangle=\alpha^p$ \cite{BEGZ}.
The following result of Collins-Tosatti \cite{CT15} is repeatedly used in the present paper:
\begin{prop}[\cite{CT15}]\label{nonkahler null}
Let $X$ be a compact K\"{a}hler manifold and $\alpha$ be a nef and big class. Let $\pi:\widehat{X}\to X$ be a modification of $X$ so that $E_{nK}(\pi^*\alpha)=\pi^{-1}(E_{nK}(\alpha))\cup \Exc(\pi)$ is a snc divisor. Then, for any divisor $D$ whose support set is contained in $E_{nK}(\pi^*\alpha)$, the following holds:
$$
(\pi^*\alpha)^{n-1}\cdot [D]=0.
$$
\end{prop}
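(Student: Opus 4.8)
\section*{Proof proposal for Proposition \ref{nonkahler null}}

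The plan is to transfer the identity to the base $X$ by the projection formula and then to read it off, prime component by prime component, from the identification of the non-K\"ahler locus with the null locus. Write $D=\sum_j a_jD_j$ with the $D_j$ the prime components of $D$; since $\Supp(D)\subseteq E_{nK}(\pi^*\alpha)$, each $D_j$ is one of the (smooth) prime components of that snc divisor, and by linearity it suffices to prove $(\pi^*\alpha)^{n-1}\cdot[D_j]=0$ for every $j$. By the projection formula $(\pi^*\alpha)^{n-1}\cdot[D_j]=\alpha^{n-1}\cdot\pi_*[D_j]$.

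Since $X$ is smooth, a modification is an isomorphism over a neighbourhood of the generic point of every prime divisor of $X$, so two cases occur. If $\dim\pi(D_j)\le n-2$ --- in particular whenever $D_j\subseteq\Exc(\pi)$ --- then $\pi_*[D_j]=0$ and $(\pi^*\alpha)^{n-1}\cdot[D_j]=0$. Otherwise $W:=\pi(D_j)$ is a prime divisor of $X$, $D_j$ is its strict transform, $\pi|_{D_j}$ is bimeromorphic onto $W$, so $\pi_*[D_j]=[W]$ and $(\pi^*\alpha)^{n-1}\cdot[D_j]=\alpha^{n-1}\cdot[W]$; moreover $D_j\not\subseteq\Exc(\pi)$ together with $D_j\subseteq E_{nK}(\pi^*\alpha)=\pi^{-1}(E_{nK}(\alpha))\cup\Exc(\pi)$ forces, by irreducibility, $D_j\subseteq\pi^{-1}(E_{nK}(\alpha))$, i.e.\ $W\subseteq E_{nK}(\alpha)$. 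Thus everything reduces to the following statement: \emph{if $W$ is a prime divisor of $X$ with $W\subseteq E_{nK}(\alpha)$, then $\alpha^{n-1}\cdot[W]=0$.} Since $\alpha$ is nef one gets $\alpha^{n-1}\cdot[W]=\int_W\alpha^{n-1}\ge 0$ at once (approximate $\alpha$ by smooth forms $\alpha_\varepsilon\ge-\varepsilon\omega_0$ and let $\varepsilon\to 0$ on a resolution of $W$), so only the vanishing is at stake.

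For the vanishing I would invoke the theorem of Collins--Tosatti \cite{CT15}: on a compact K\"ahler manifold the non-K\"ahler locus of a nef and big class equals its null locus, $E_{nK}(\alpha)=\Null(\alpha):=\bigcup\{\,V\subsetneq X\text{ irreducible},\ \dim V>0\ :\ \int_V\alpha^{\dim V}=0\,\}$, which is a proper analytic subset each of whose irreducible components is itself a null subvariety. Given $W\subseteq E_{nK}(\alpha)=\Null(\alpha)$ an irreducible hypersurface, $W$ lies in some irreducible component $W'$ of $\Null(\alpha)$; as $W'\subsetneq X$ contains the hypersurface $W$ we must have $\dim W'=n-1$ and $W'=W$, whence $\alpha^{n-1}\cdot[W]=\int_{W'}\alpha^{\dim W'}=0$. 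This finishes the argument.

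The only genuinely non-formal input is the Collins--Tosatti theorem, and within it the inclusion $E_{nK}(\alpha)\subseteq\Null(\alpha)$ --- equivalently, that every subvariety of the non-K\"ahler locus has vanishing top self-intersection. This is a transcendental K\"ahler analogue of Nakamaye's theorem resting on a delicate mass-concentration estimate for K\"ahler currents, and I would cite it rather than reprove it; that is the real obstacle. Everything else is routine, and it is worth noting that the $\pi$-exceptional components of $E_{nK}(\pi^*\alpha)$ are disposed of for free by the codimension count, so the substantive content concerns only the strict transforms of the (finitely many) divisorial components of $E_{nK}(\alpha)$.
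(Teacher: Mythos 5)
Your argument is correct, but note that the paper offers no proof of this proposition at all: it is stated as a direct citation of Collins--Tosatti \cite{CT15}, so what you have really done is supply the (routine) derivation from their theorem. Two remarks on your derivation. First, there is a shortcut that makes the projection-formula bookkeeping unnecessary: $\pi^*\alpha$ is itself nef and big on the compact K\"ahler manifold $\widehat{X}$, and the hypothesis already places every prime component $D_j$ of $\Supp(D)$ inside $E_{nK}(\pi^*\alpha)$; applying \cite{CT15} on $\widehat{X}$ then gives $\int_{D_j}(\pi^*\alpha)^{n-1}=0$ immediately, with no case division between exceptional components and strict transforms. Second, a small precision about the input you invoke: the bare set-theoretic equality $E_{nK}(\alpha)=\Null(\alpha)$ does not by itself formally imply that a prime divisor $W\subseteq\Null(\alpha)$ is a null subvariety (a divisor can be covered by a family of proper null subvarieties), so calling the two statements ``equivalent'' is a slight overstatement. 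What one actually needs, and what the proof in \cite{CT15} establishes (their mass-concentration argument shows that $\int_V\alpha^{\dim V}>0$ forces $V\not\subseteq E_{nK}(\alpha)$), is the pointwise form: \emph{every} positive-dimensional irreducible subvariety contained in $E_{nK}(\alpha)$ has vanishing top self-intersection against $\alpha$. With that form of the citation, your reduction — and the shorter one above — closes the argument.
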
 
Above Proposition \ref{nonkahler null} is closely related to the differentiability of the volume function $\alpha\mapsto\langle\alpha^n\rangle$. The reader can refer to \cite{Nystr19} and \cite{Vu23} for the differentiability of the volume function on the cone of big classes.

\subsection{solvability of Monge-Ampere equations in big classes and regularity results}
In this section, we note the important results from \cite{BEGZ}. 
Let $X$ be a compact K\"{a}hler manifold and $\alpha$ be a big class on $X$. Then the Monge-Amp\`{e}re equation in $\alpha$ is the equation of positive measures stated as follows:
$$
\langle T^n\rangle =\mu
$$
where $\mu$ is a given positive measure on $X$ with $\mu(X)=\langle\alpha^n\rangle$ and $T$ is a closed positive $(1,1)$-current in $\alpha$. In the context of complex geometry, the importance of the Monge-Amp\`{e}re equation comes from the fact that the equation of the K\"{a}hler-Einstein metrics $\Ric(\omega)=\lambda\omega$ is equivalent to a suitable type of Monge-Amp\`{e}re equations. \\
Boucksom-Eyssidieux-Guedji-Zeriahi \cite{BEGZ} proved the following solvability theorem in the general setting.
\begin{theo}[\cite{BEGZ} Theorem 3.1, Theorem 4.1]
Let $(X,\omega_0)$ be a compact K\"{a}hler manifold and $\alpha$ be a big class on $X$. If $\mu$ is a positive measure on $X$ that puts no mass on any pluripolar subsets and satisfies $\mu(X)=\langle\alpha^n\rangle$, then there exists a closed positive $(1,1)$-current $T$ in $\alpha$ such that
$$
\langle T^n\rangle =\mu.
$$
If $\mu=f\omega_0^n$ with $f\in L^p(\omega_0^n)$ for some $p>1$, then the solution $T$ has minimal singularities.
\end{theo}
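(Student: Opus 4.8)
The plan is a three-step scheme: solve the equation first for a smooth density bounded below by a positive constant, then remove that restriction and extend to an arbitrary non-pluripolar $\mu$ by approximation together with a priori estimates, and finally deduce the regularity statement from a Ko\l odziej-type $L^\infty$ bound adapted to big classes. Fix a smooth closed $(1,1)$-form $\theta\in\alpha$, so that solutions are sought in the form $T=\theta+dd^c\varphi$ with $\varphi$ a $\theta$-psh function normalized by $\varphi\le\varphi_{\min}\le 0$, where $\varphi_{\min}$ is a potential of a current with minimal singularities in $\alpha$. For the first step I would use that $\alpha$ is big: by Boucksom's analytic Zariski decomposition \cite{Bou04} and Demailly regularization, for each small $\varepsilon>0$ there is a modification $\pi_\varepsilon\colon X_\varepsilon\to X$ with $\pi_\varepsilon^*\alpha=\{\omega_\varepsilon\}+[D_\varepsilon]$, where $\omega_\varepsilon$ is a genuine K\"ahler form on $X_\varepsilon$, $D_\varepsilon$ an effective $\mathbb{R}$-divisor supported on $E_{nK}(\pi_\varepsilon^*\alpha)$, and $\int_{X_\varepsilon}\omega_\varepsilon^n\nearrow\langle\alpha^n\rangle$. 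Pulling $\mu$ back and adding an auxiliary measure supported on $\Supp D_\varepsilon$ of the (small, nonnegative) mass $\langle\alpha^n\rangle-\int_{X_\varepsilon}\omega_\varepsilon^n$ to match total masses, one reduces to Yau's theorem (resp. Ko\l odziej's $L^p$-estimate) in the K\"ahler class $\{\omega_\varepsilon\}$ on $X_\varepsilon$; since $[D_\varepsilon]$ is pluripolar the resulting current, after adding $[D_\varepsilon]$, lies in $\pi_\varepsilon^*\alpha$ and its nonpluripolar product over $X_\varepsilon\setminus\Supp D_\varepsilon$ equals the pull-back of $\mu$. Pushing these solutions forward and letting $\varepsilon\to0$ should produce a solution on $X$ in $\alpha$, provided suitable uniform estimates are available.

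The second step supplies those estimates. The tools are the comparison principle for nonpluripolar Monge-Amp\`ere products of currents with small unbounded locus; the Monge-Amp\`ere capacity $\mathrm{Cap}_\alpha$ attached to the big class $\alpha$, together with the Alexander-Taylor type volume-capacity comparison; and the Ko\l odziej iteration. The hypothesis that $\mu$ charges no pluripolar set---and, in the $L^p$ case, H\"older's inequality---yields an estimate $\mu(K)\le F\bigl(\mathrm{Cap}_\alpha(K)\bigr)$ for Borel $K$, with $F$ vanishing fast enough at the origin; feeding this into the comparison principle and iterating gives, in the $L^p$ case, $\sup_X(\varphi_{\min}-\varphi)\le C$, and in general the precompactness of the family of approximate solutions in the weak topology. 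A weak limit of the approximants solves the limiting equation because the nonpluripolar product behaves well along monotone sequences of potentials. An arbitrary non-pluripolar $\mu$ with $\mu(X)=\langle\alpha^n\rangle$ is then reached by truncating it to measures with bounded density (via a Radon-Nikodym type splitting with respect to capacity, in the spirit of Cegrell's theory) and passing to the limit, with no claim on the singularities of the resulting current.

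For the regularity assertion one argues a priori: any solution $T=\theta+dd^c\varphi$ of $\langle T^n\rangle=f\omega_0^n$ with $\|f\|_{L^p}\le A$, $p>1$, normalized as above, satisfies $\sup_X(\varphi_{\min}-\varphi)\le C(A,p,X,\alpha)$, hence automatically has minimal singularities. The argument runs the Ko\l odziej machine with the relative capacity: H\"older's inequality bounds $\mu(\{\varphi<\varphi_{\min}-t-s\})$ by $A\,\vol(\{\varphi<\varphi_{\min}-t-s\})^{1-1/p}$, the volume is controlled by a power of $\mathrm{Cap}_\alpha(\{\varphi<\varphi_{\min}-t\})$, and the comparison principle converts this into a recursive inequality for $g(t):=\mathrm{Cap}_\alpha(\{\varphi<\varphi_{\min}-t\})^{1/n}$ of the shape $s\,g(t+s)\le C\,g(t)^{1+\delta}$; such a recursion forces $g(t)=0$ for $t\ge C$, i.e.\ the sublevel sets $\{\varphi<\varphi_{\min}-t\}$ are empty once $t\ge C$, which is exactly the claimed lower bound on $\varphi$.

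The main obstacle is concentrated in the last two steps: one must run the capacity-volume comparison and the Ko\l odziej iteration with constants uniform along the degenerating family $\pi_\varepsilon$, even though a big class carries no globally bounded potential and all the analytic difficulty sits on the pluripolar set $E_{nK}(\alpha)$, precisely where the nonpluripolar product discards mass. Ensuring that no mass escapes along the exceptional divisors when passing to the limit, and that the limiting current genuinely represents $\alpha$ rather than a strictly more singular current in $\alpha$, is the technical heart of the argument; this is where the theory of nonpluripolar products and their monotonicity properties does the decisive work.
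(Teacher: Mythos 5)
This statement is quoted verbatim from \cite{BEGZ} (Theorems 3.1 and 4.1) and the paper supplies no proof of it, so there is nothing internal to compare against; I will therefore assess your sketch against the actual argument in \cite{BEGZ}. The parts of your outline concerning the relative Monge--Amp\`ere capacity, the volume--capacity comparison, the comparison principle for nonpluripolar products, and the Ko\l odziej/De Giorgi recursion $s\,g(t+s)\le C g(t)^{1+\delta}$ forcing $\varphi\ge V_\theta-C$ are an accurate description of how the $L^p$ regularity statement (Theorem 4.1 of \cite{BEGZ}) is proved, and the remark that existence for general non-pluripolar $\mu$ goes through a Cegrell-type Radon--Nikodym splitting and monotone continuity of the nonpluripolar product is also in the right spirit.

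The genuine gap is in your first step. You propose to match total masses on the modification $X_\varepsilon$ by adding an auxiliary measure of mass $\langle\alpha^n\rangle-\int_{X_\varepsilon}\omega_\varepsilon^n$ supported on $\Supp D_\varepsilon$, and then to invoke Yau or Ko\l odziej in the K\"ahler class $\{\omega_\varepsilon\}$. This cannot work: a measure carried by a divisor charges a pluripolar set, so it is not a smooth volume form (Yau) nor an $L^p$ density (Ko\l odziej), and in any case the Monge--Amp\`ere measure of a bounded potential puts no mass on pluripolar sets, so no solution in $\{\omega_\varepsilon\}$ can have that right-hand side; the standard fix is to rescale $\pi_\varepsilon^*\mu$ by the constant $c_\varepsilon=\int_{X_\varepsilon}\omega_\varepsilon^n/\langle\alpha^n\rangle\to 1$ instead. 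More fundamentally, for a general non-pluripolar $\mu$ with no density the ``reduction to the K\"ahler case'' does not reduce to any classical theorem --- the K\"ahler-class analogue of Theorem 3.1 is itself the Guedj--Zeriahi existence theorem, proved by exactly the capacity and fixed-point machinery you defer to step two --- and \cite{BEGZ} in fact never pass to a modification for Theorems 3.1 and 4.1: they work directly on $X$ with the extremal function $V_\theta$ of the big class, the relative capacity, and a limiting argument along truncated densities, which is precisely what avoids the unresolved issue you flag of mass escaping along the exceptional divisors as $\varepsilon\to 0$. So the scheme is salvageable only by replacing your step one with the intrinsic construction, at which point the modifications become unnecessary.
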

They also proved the following regularity result if $\alpha$ is nef and big which is used in this paper:
\begin{theo}[\cite{BEGZ}]\label{monge ampere equation in nef and big}
Let $X$ be a compact K\"{a}hler manifold and $\alpha$ be a nef and big class on $X$. Let $\mu$ is a smooth volume form on $X$ with $\mu(X)=\alpha^n$ and $T$ be a closed positive $(1,1)$-current in $\alpha$ which satisfies $\langle T^n\rangle=\mu$. Then $T$ has minimal singularities and $T$ is smooth K\"{a}hler on $\Amp(\alpha)$.
\end{theo}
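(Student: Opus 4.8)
Fix an auxiliary K\"{a}hler form $\omega_0$ on $X$ and write $\mu=f\,\omega_0^n$ with $f$ smooth and strictly positive. That $T$ has minimal singularities is then immediate from the solvability theorem quoted above, since $f\in L^p(\omega_0^n)$ for every $p>1$. The substantive part is the regularity on $\Amp(\alpha)$, and the plan is to: (i) construct, by approximating $\alpha$ with K\"{a}hler classes, a solution that is a priori smooth and K\"{a}hler on $\Amp(\alpha)$; (ii) establish uniform local estimates on $\Amp(\alpha)$; and (iii) identify the limit with $T$ by the uniqueness of the non-pluripolar Monge--Amp\`{e}re equation.

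For (i): since $\alpha$ is nef, the class $\alpha_\varepsilon:=\alpha+\varepsilon\{\omega_0\}$ is K\"{a}hler and big for all $\varepsilon>0$; fixing a smooth closed form $\theta\in\alpha$ and setting $\theta_\varepsilon:=\theta+\varepsilon\omega_0$, Yau's solution of the Calabi conjecture gives a unique smooth K\"{a}hler form $\omega_\varepsilon=\theta_\varepsilon+dd^cu_\varepsilon\in\alpha_\varepsilon$ with $\sup_X u_\varepsilon=0$ and $\omega_\varepsilon^n=c_\varepsilon\mu$, where $c_\varepsilon:=\alpha_\varepsilon^n/\!\int_X\mu\to 1$. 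Fix also, by Proposition \ref{kahler current nonkahler locus}, a strongly $\theta$-psh function $\psi$ with analytic singularities exactly along $E_{nK}(\alpha)$, normalized so that $\psi\le 0$. Because the volumes $\alpha_\varepsilon^n$ stay between $\alpha^n>0$ and a fixed bound while $\|f\|_{L^p}$ and the bounds $-C_0\omega_0\le\theta_\varepsilon\le C_0\omega_0$ are uniform in $\varepsilon$, the Ko{\l}odziej--Eyssidieux--Guedj--Zeriahi $L^\infty$-estimate in big classes (see \cite{BEGZ}) applies with a constant independent of $\varepsilon$ and gives $u_\varepsilon\ge\psi-C$ on $X$; since $\psi$ is smooth on $\Amp(\alpha)$, this already yields $\|u_\varepsilon\|_{L^\infty(K)}\le C_K$ for every compact $K\Subset\Amp(\alpha)$.

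Step (ii) is the core of the argument, and it is where the non-K\"{a}hlerness of $\alpha$ genuinely enters. One wants a uniform second-order estimate on compact subsets of $\Amp(\alpha)$, obtained from the Chern--Lu (Aubin--Yau) inequality combined with the trick of feeding the analytic-singularity potential $\psi$ into the test function, as in \cite{CT15}. Concretely one studies $Q_\varepsilon:=\log\operatorname{tr}_{\omega_0}\omega_\varepsilon-A(u_\varepsilon-\psi)$: for each fixed $\varepsilon$ the function $\operatorname{tr}_{\omega_0}\omega_\varepsilon$ is bounded on $X$ while $u_\varepsilon-\psi\to+\infty$ toward $E_{nK}(\alpha)$, so $Q_\varepsilon$ attains its maximum inside $\Amp(\alpha)$. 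Taking $\operatorname{tr}_{\omega_\varepsilon}$ of $dd^cQ_\varepsilon$ at that point, using the bisectional curvature lower bound of $\omega_0$ and the fact that $\Ric(\omega_\varepsilon)=\Ric(\omega_0)-dd^c\log f$ is smooth and $\varepsilon$-independent, choosing $A$ large enough (the extra nonnegative term $\varepsilon\operatorname{tr}_{\omega_\varepsilon}\omega_0$ arising from $\theta_\varepsilon-\theta$ only helps), and using the elementary inequality $\operatorname{tr}_{\omega_0}\omega_\varepsilon\le\frac{1}{(n-1)!}(\operatorname{tr}_{\omega_\varepsilon}\omega_0)^{n-1}\,\omega_\varepsilon^n/\omega_0^n$ together with $c\,\omega_0^n\le\omega_\varepsilon^n\le C\,\omega_0^n$, one bounds $\operatorname{tr}_{\omega_0}\omega_\varepsilon$ at the maximum uniformly in $\varepsilon$; feeding this back through $Q_\varepsilon$ and the bound $u_\varepsilon-\psi\ge -C$ gives $\operatorname{tr}_{\omega_0}\omega_\varepsilon\le C_K$ on every $K\Subset\Amp(\alpha)$. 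Combined with $\omega_\varepsilon^n=c_\varepsilon\mu\ge c\,\omega_0^n$ this yields $C_K^{-1}\omega_0\le\omega_\varepsilon\le C_K\omega_0$ on $K$; since $\|u_\varepsilon\|_{C^0(K)}$ is already controlled, Evans--Krylov gives uniform $C^{2,\beta}_{\mathrm{loc}}(\Amp(\alpha))$ bounds, and differentiating the Monge--Amp\`{e}re equation and bootstrapping via Schauder estimates gives uniform $C^k_{\mathrm{loc}}(\Amp(\alpha))$ bounds for all $k$.

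For (iii): by the $L^1$-compactness of quasi-plurisubharmonic functions together with the local $C^\infty$ bounds, a subsequence converges $u_\varepsilon\to u$ in $L^1(X)$ and in $C^\infty_{\mathrm{loc}}(\Amp(\alpha))$, so $T':=\theta+dd^cu$ is a closed positive $(1,1)$-current in $\alpha$ which is smooth K\"{a}hler on $\Amp(\alpha)$ and has minimal singularities; since $\mu$ and $\langle (T')^n\rangle$ both give no mass to the pluripolar set $E_{nK}(\alpha)$ and $\omega_\varepsilon^n\to(T')^n$ smoothly on $\Amp(\alpha)$, one gets $\langle (T')^n\rangle=\mu=\langle T^n\rangle$. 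The uniqueness theorem for the Monge--Amp\`{e}re equation in big classes (\cite{BEGZ}) then forces $T=T'$, so $T$ is smooth K\"{a}hler on $\Amp(\alpha)$. I expect the main obstacle to be the uniform second-order estimate in (ii): one has to prevent $\operatorname{tr}_{\omega_0}\omega_\varepsilon$ from blowing up as one approaches $E_{nK}(\alpha)$, and it is exactly the insertion of a K\"{a}hler current with analytic singularities into the maximum principle that makes this work --- which is also the structural reason why smoothness is available only on the ample locus and not on all of $X$.
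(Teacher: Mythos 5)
Your proposal is correct and follows essentially the same route as the paper's source: the statement is quoted from \cite{BEGZ} (Theorem 5.1 there), and the paper's own sketch in Lemma \ref{positive current nef big} proceeds exactly as you do --- Yau solutions in the K\"ahler classes $\alpha+\varepsilon\{\omega_0\}$, a uniform Laplacian bound obtained by inserting the analytic-singularity potential of a K\"ahler current into the maximum principle (which is why the estimate degenerates only along $E_{nK}(\alpha)$), then Evans--Krylov/Schauder bootstrapping and identification of the limit via uniqueness of the non-pluripolar Monge--Amp\`ere equation. No substantive difference to report.
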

In the proof of Theorem \ref{monge ampere equation in nef and big} in \cite{BEGZ}, they also proved the existence of approximation sequence by K\"{a}hler metrics and the detailed positivity result of the solution $T$. We will refer to these results in Lemma \ref{positive current nef big}.

\section{Slope stability and the admissible HYM connections}
We say that a torsion free subsheaf $\mathcal{F}$ of a torsion free sheaf $\mathcal{E}$ on a complex manifold is {\it{saturated}} if the quotient $\mathcal{E}/\mathcal{F}$ is torsion free.
We define the notion of slope stability with respect to nef and big classes.
\begin{defi}\label{stability def}
Let $X$ be a compact K\"{a}hler manifold and $\alpha$ be a nef and big class on $X$. Then we say a torsion free sheaf $\mathcal{E}$ is $\alpha^{n-1}$-slope stable if, for any saturated subsheaf $\mathcal{F}$ of $\mathcal{E}$ satisfies the inequality $\mu_{\alpha}(\mathcal{F})<\mu_{\alpha}(\mathcal{E})$. Here $\mu_{\alpha}(\mathcal{F})$ is a real number called as $\alpha^{n-1}$-slope defined by
$$
\mu_{\alpha}(\mathcal{F}):=\frac{1}{\rk \mathcal{F}}\int_Xc_1(\det\mathcal{F})\wedge\frac{\alpha^{n-1}}{(n-1)!}.
$$
\end{defi}
\noindent Here $\det\mathcal{F}$ is the determinant line bundle of $\mathcal{F}$ which is isomorphic to $(\bigwedge^{\rk\mathcal{F}}\mathcal{F})^{**}$ and $c_1(\det\mathcal{F})$ is its first Chern class (refer to \cite{Kob} for more general definition).  
\begin{defi}
Let $X$ be a compact K\"{a}hler manifold and $\alpha$ be a nef and big class on $X$. Then a torsion free sheaf $\E$ is {\it{$\alpha^{n-1}$-slope polystable}} if there exists $\alpha^{n-1}$-slope stable torsion free sheaves $\E_1,\ldots,\E_l$ such that 
$$
\E|_{\Amp(\alpha)}\simeq(\E_1\oplus\cdots\oplus\E_l)|_{\Amp(\alpha)}.
$$
\end{defi}
\begin{lemm}[\cite{Chen},\cite{Jin25}]\label{invariance}
Let $X$ be a compact K\"{a}hler manifold and $\alpha\in H^{1,1}(X,\mathbb{R})$ be a nef and big class on $X$. Let $\mathcal{E}$ be a torsion free sheaf on $X$. Then $\mathcal{E}$ is $\alpha^{n-1}$-slope stable (polystable) if and only if $\pi^*\E/\Tor$ is $\pi^*\alpha^{n-1}$-slope stable (polystable) for any blowup $\pi:\widehat{X}\to X$.
\end{lemm}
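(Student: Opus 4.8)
The plan is to treat ``pull back and remove torsion'' as a slope\nobreakdash-preserving operation on saturated subsheaves, so that destabilizing subsheaves correspond to destabilizing subsheaves of the same slope. First I would record the structural facts: since $\alpha$ is nef and big, $\pi^*\alpha$ is nef and $(\pi^*\alpha)^n=\alpha^n>0$, so $\pi^*\alpha$ is again nef and big and Definition \ref{stability def} applies to it on $\widehat X$; and since $X$ is smooth (hence normal), the modification $\pi$ is an isomorphism in codimension one, so it is an isomorphism over $X\setminus Z$ with $Z:=\pi(\Exc(\pi))$ an analytic subset of codimension $\ge 2$, while $\Exc(\pi)=\bigcup_i E_i$ is a union of prime divisors each contracted by $\pi$. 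The numerical input I will use is that $(\pi^*\alpha)^{n-1}\cdot[E_i]=0$ for every $i$: by the projection formula $\pi_*\big((\pi^*\alpha)^{n-1}\cdot[E_i]\big)=\alpha^{n-1}\cdot\pi_*[E_i]=0$, because $\dim\pi(E_i)<\dim E_i$ forces $\pi_*[E_i]=0$ and $\pi_*$ is an isomorphism in top degree (alternatively this is a special case of Proposition \ref{nonkahler null}, as $\Exc(\pi)\subseteq E_{nK}(\pi^*\alpha)$); I will also use that $(\pi^*\alpha)^{n-1}\cdot[D]\ge 0$ for effective $D$, by nefness.

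Next I would set up the correspondence. For a saturated subsheaf $\mathcal{F}\subseteq\mathcal{E}$, let $\mathcal{F}^{\sharp}$ be the saturation of $\pi^*\mathcal{F}/\Tor$ inside $\pi^*\mathcal{E}/\Tor$. Then $\rk\mathcal{F}^{\sharp}=\rk\mathcal{F}$, and since $\mathcal{F}^{\sharp}$ coincides with $\pi^*\mathcal{F}$ away from $\Exc(\pi)$, the class $c_1(\det\mathcal{F}^{\sharp})-\pi^*c_1(\det\mathcal{F})$ is a combination of the $[E_i]$ (and likewise for $\mathcal{F}=\mathcal{E}$); pairing with $(\pi^*\alpha)^{n-1}$ and using the numerical input gives $\mu_{\pi^*\alpha}(\mathcal{F}^{\sharp})=\mu_{\alpha}(\mathcal{F})$ and $\mu_{\pi^*\alpha}(\pi^*\mathcal{E}/\Tor)=\mu_{\alpha}(\mathcal{E})$. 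Conversely, given a saturated $\mathcal{G}\subseteq\pi^*\mathcal{E}/\Tor$, restrict it over $\widehat X\setminus\Exc(\pi)\cong X\setminus Z$, view it there as a subsheaf of $\mathcal{E}$, and extend across the codimension\nobreakdash-$\ge 2$ set $Z$ to a saturated $\mathcal{F}\subseteq\mathcal{E}$ with $\rk\mathcal{F}=\rk\mathcal{G}$; then $\mathcal{G}$ and $\mathcal{F}^{\sharp}$ agree off $\Exc(\pi)$, and since $(\pi^*\mathcal{E}/\Tor)/\mathcal{F}^{\sharp}$ is torsion\nobreakdash-free the composite $\mathcal{G}\to(\pi^*\mathcal{E}/\Tor)/\mathcal{F}^{\sharp}$ has torsion\nobreakdash-free image supported on the divisor $\Exc(\pi)$, hence vanishes, so $\mathcal{G}\subseteq\mathcal{F}^{\sharp}$ and (equal ranks, nefness) $\mu_{\pi^*\alpha}(\mathcal{G})\le\mu_{\pi^*\alpha}(\mathcal{F}^{\sharp})=\mu_{\alpha}(\mathcal{F})$. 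Hence for each $0<r<\rk\mathcal{E}$ the supremum of $\mu_{\alpha}(\mathcal{F})$ over saturated rank\nobreakdash-$r$ subsheaves of $\mathcal{E}$ equals the corresponding supremum for $\pi^*\mathcal{E}/\Tor$; comparing with the common value $\mu_{\alpha}(\mathcal{E})=\mu_{\pi^*\alpha}(\pi^*\mathcal{E}/\Tor)$ yields the equivalence of stability. For polystability I would use that $\pi$ restricts to an isomorphism $\Amp(\pi^*\alpha)\cong\Amp(\alpha)\setminus Z$, that $\Amp(\alpha)\setminus Z$ differs from $\Amp(\alpha)$ only in codimension $\ge 2$, and that $\pi^*\big(\bigoplus_i\mathcal{E}_i\big)/\Tor=\bigoplus_i\big(\pi^*\mathcal{E}_i/\Tor\big)$, transferring a stable\nobreakdash-summand decomposition through the stable case just proved.

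I expect the main obstacle to be precisely that $\Exc(\pi)$ is a divisor, not a codimension\nobreakdash-$\ge 2$ locus, so that, unlike in the situation of \cite{Chen}, a saturated subsheaf of $\pi^*\mathcal{E}/\Tor$ is not determined by its restriction away from $\Exc(\pi)$ and there is no naive bijection of subsheaves; comparing destabilizing slopes rather than the subsheaves themselves, and promoting a generic inclusion into $\mathcal{F}^{\sharp}$ to a global one via torsion\nobreakdash-freeness of the quotient, is the device that gets around this, with the vanishing $(\pi^*\alpha)^{n-1}\cdot[E_i]=0$ being exactly what makes the matched slopes equal on the nose rather than merely up to exceptional corrections.
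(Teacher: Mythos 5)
The paper itself gives no proof of this lemma, citing \cite{Chen} and \cite{Jin25}; your argument reconstructs the standard mechanism used there and is essentially correct: the two key inputs are the vanishing $(\pi^*\alpha)^{n-1}\cdot[E_i]=0$ for the $\pi$-exceptional prime divisors (via the projection formula or Proposition \ref{nonkahler null}) and the slope-preserving correspondence between saturated subsheaves of $\mathcal{E}$ and of $\pi^*\mathcal{E}/\Tor$, obtained by saturating pullbacks in one direction and extending across the codimension-two set $\pi(\Exc(\pi))$ in the other. The only steps you leave as black boxes are themselves standard but worth flagging: the extension of a saturated subsheaf of $\mathcal{E}$ across a codimension-two analytic set, the fact that $c_1$ of $\pi^*\mathcal{F}/\Tor$ and of its saturation differ from $\pi^*c_1(\det\mathcal{F})$ only by exceptional divisor classes, and the transfer of the direct-sum decomposition in the polystable case, which you only gesture at.
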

 In Lemma \ref{max slope} and Lemma \ref{approximating stability}, we will see some basic properties of slope stability.
 
\begin{rema}
We can define the notion of slope stability with respect to big, not necessarily nef, cohomology classes on compact normal analytic varieties. The readers can refer to \cite{IJZ25} for the general definitions and some results (e.g. the Bogomolov-Gieseker inequality and the Miyaoka-Yau inequality).
\end{rema}

The following definition is a generalization of the admissible Hermitian-Yang-Mills metrics which is defined by \cite{BS} for K\"{a}hler metrics:
\begin{defi}\label{admissible HYM def}
Let $X$ be a compact K\"{a}hler manifold and $T$ be a closed positive $(1,1)$-current which is smooth K\"{a}hler on a Zariski open set $\Omega$. Let $({E},h_0)$ be a complex hermitian vector bundle on $X$.\\
$(1)$ A {\it{$T$-admissible Hermitian-Yang-Mills (HYM, in short) connection}} on $(E, h_0)$ is a smooth $h_0$-connection $\nabla$ on $E|_{\Omega}$ which satisfies the followings:
\begin{itemize}
\item (integrability) $\delbar^{\nabla}\circ\delbar^{\nabla}=0$.
\item ($T$-HYM equation) $\sqrt{-1}\Lambda_TF_{\nabla}=\lambda\Id$ on $\Omega$ where $\lambda$ is a real number and
\item ($T$-admissible condition) $\int_{\Omega}|F_{\nabla}|_T^2T^n<\infty$.
\end{itemize}
\noindent We call $\lambda$ as the $T$-HYM constant of $\nabla$.\\
\noindent $(2)$ The $T$-admissible HYM metric on a holomorphic vector bundle $E$ is a smooth hermitian metric $h$ on $E|_{\Omega}$ such that the Chern connection $\nabla_h$ of $h$ is  a $T$-admissible HYM connection of $(E, h)$.
\end{defi}

\section{Existence of admissible HYM metrics}
\subsection{choice of positive currents}
To solve the $T$-HYM equation, we put the following assumption:
\begin{assu}\label{assumption2}
Let $(Y,\omega_Y)$ be a compact K\"{a}hler manifold. 
Let $\omega$ be the solution to the complex Monge-Amp\`{e}re equation $\omega^n=e^F\omega_Y^n$, where $F$ satisfies the following conditions:
\begin{itemize}
\item there is an analytic subset $Z\subset Y$ with $\codim Z\ge 2$. For simplicity, we assume that $Z$ is irreducible and its codimension equals to $l\ge 2$.
\item $F\in C^{\infty}(Y\setminus Z)$ and there is a constant $0\le a \le 1/2$ such that $F=-a\log(|f_1|^2+\cdots+|f_l|^2)+O(1)$ locally where $(f_1,\ldots,f_l)$ is a local defining functions of $Z$.
\item there exists a modification $\pi:X\to Y$ along $Z$ and a sequence of K\"{a}hler metrics $\omega_i$ in $\pi^*\{\omega\}+(1/i)\omega_X$ such that 
\begin{itemize}
\item[$\bullet$] $\omega_i\to \pi^*\omega$ in $C^{\infty}_{\loc}(X\setminus D)$ and weakly on $X$ where $D=\pi^{-1}(Z)$ is the exceptional divisor.
\item[$\bullet$] $\int_X\left|\log\left(\frac{\omega_i^n}{\omega_X^n}\right)\right|^p\omega_i^n\le C$ for some $p>n$. 
\end{itemize}
\end{itemize}
We furthermore assume that
\begin{itemize}
\item $(E,h_0)=(\pi^*F,\pi^*h_F)$ where $\pi:X\to Y$ is a modification as above, $F$ is a holomorphic vector bundle on $Y$ and $h_F$ is a smooth hermitian metric on $F$.
\item $\alpha:=\pi^*\{\omega\}$ and $E$ is $\alpha^{n-1}$-slope stable.
\item We denote by $T:=\pi^*\omega$ where $\pi:X\to Y$ is a modification as above.
\end{itemize}
\end{assu}

\begin{lemm}[\cite{BEGZ}, \cite{CCHTST25}]\label{regularity of omega}
Let $\omega$ be a solution to $\omega^n=e^F\omega_Y^n$ in Assumption \ref{assumption2}. Then the following holds:\\
$(1)$ {\rm{\cite{BEGZ}}} The solution $\omega$ exists and it is smooth on $Y\setminus Z$. \\
$(2)$ {\rm{\cite{CCHTST25}}} The solutoin $\omega$ is a K\"{a}hler current on $Y$. Furthermore, there exists a modification $\pi:X\to Y$ along $Z$ so that $D=\pi^{-1}(Z)$ is a snc divisor and K\"{a}hler metrics $\omega_i$ in $\pi^*\{\omega\}+(1/i)\omega_X$ such that 
\begin{itemize}
\item[{\rm{(a)}}] $(\pi^*\omega)^n=e^f|s_D|^q\omega_X^n$ for some $q\ge 1$ where $s_D$ is a canonical section of $D$ and $f\in C^{\infty}(X)$.
\item[{\rm{(b)}}] $\omega_i\to \pi^*\omega$ in $C^{\infty}_{\loc}(X\setminus D)$ and weakly on $X$ where $D=\pi^{-1}(Z)$ is the exceptional divisor.
\item[{\rm{(c)}}] There exists a constant $C>0$ such that $\int_X\left|\log\left(\frac{\omega_i^n}{\omega_X^n}\right)\right|^p\omega_i^n\le C$.
\end{itemize} 
\end{lemm}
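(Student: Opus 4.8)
The plan is to combine the solvability and regularity theory for Monge–Ampère equations in big classes (Theorem \ref{monge ampere equation in nef and big}) on a suitable resolution with the recent regularity result of \cite{CCHTST25}. First I would set up the cohomological picture on $Y$: since $F = -a\log(|f_1|^2+\cdots+|f_l|^2)+O(1)$ locally along $Z$ with $0\le a\le 1/2$, the density $e^F$ is locally of the form (bounded)$\cdot(|f_1|^2+\cdots+|f_l|^2)^{-a}$, and because $\codim Z\ge 2$ and $a<1$ (actually $a\le 1/2$) this lies in $L^p(\omega_Y^n)$ for some $p>1$; moreover $e^F$ puts no mass on pluripolar sets. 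Hence by the Boucksom–Eyssidieux–Guedj–Zeriahi existence theorem quoted in the excerpt there is a closed positive $(1,1)$-current $\omega=\omega_Y+dd^c\varphi$ in $\{\omega_Y\}$ with $\langle\omega^n\rangle = e^F\omega_Y^n$ and $\varphi$ with minimal singularities; since $\{\omega_Y\}$ is itself a Kähler class, minimal singularities means $\varphi$ is bounded, so $\omega$ has bounded potentials and $\langle\omega^n\rangle=\omega^n$. Local elliptic regularity (Evans–Krylov / Schauder bootstrapping) applied on $Y\setminus Z$, where $F$ is smooth and $e^F>0$, upgrades $\varphi$ to $C^\infty(Y\setminus Z)$, giving assertion $(1)$.

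For $(2)$, the fact that $\omega$ is a Kähler current on $Y$ is the substance of \cite{CCHTST25}: the point is that even though $\{\omega_Y\}$ is merely Kähler, the right-hand side $e^F\omega_Y^n$ blows up along $Z$, which forces $\omega$ to dominate $\varepsilon\omega_Y$ for some $\varepsilon>0$ — intuitively, near $Z$ the volume form is large, so the solution cannot degenerate there. I would then invoke \cite{CCHTST25} directly for the existence of a modification $\pi:X\to Y$ along $Z$ making $D=\pi^{-1}(Z)$ an snc divisor, together with the approximating Kähler metrics $\omega_i\in\pi^*\{\omega\}+(1/i)\omega_X$. The class $\pi^*\{\omega\}$ is nef (it is a pullback of a Kähler, hence nef, class) and big (its top self-intersection is $\int_Y\omega^n=\int_Y e^F\omega_Y^n>0$), and its non-Kähler locus is precisely $D$, so Theorem \ref{monge ampere equation in nef and big} and the approximation construction in its proof in \cite{BEGZ} apply on $X$: $\pi^*\omega$ has minimal singularities, is smooth Kähler on $X\setminus D$, and is approximated in $C^\infty_{\loc}(X\setminus D)$ and weakly on $X$ by the $\omega_i$, giving (b).

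For (a), the Monge–Ampère equation transforms under $\pi$ by a Jacobian: writing $\pi^*\omega_Y^n = |J_\pi|^2\,\omega_X^n$ where the Jacobian of the modification vanishes to some order along $D$ — that is, $|J_\pi|^2 = e^g|s_D|^{2k}$ for a divisor $k\,D$ with $k\ge 0$ and $g$ smooth — and $\pi^*F = -2a\log|s_D| + (\text{smooth})$ plus contributions from the smooth part, one gets $(\pi^*\omega)^n = e^{f_0}|s_D|^{2(k-a\,\mathrm{ord}_D)}\omega_X^n$; the combined exponent $q$ is $\ge 1$ because the discrepancy/Jacobian vanishing of a non-trivial modification along its exceptional divisor is at least $1$, while the correction $-2a\,\mathrm{ord}_D(\pi^*(|f_1|^2+\cdots))$ only decreases it by at most $2a\le 1$ times that order, so one checks $q=k-a\,\mathrm{ord}\ge 1$ on each component using $a\le 1/2$. (Here one should be slightly careful: $\mathrm{ord}_D$ of the pulled-back defining functions of $Z$ relates to $k$ via the usual formula for blowups of smooth centers, $k = (\text{codim of center})-1 = l-1$ and $\mathrm{ord}_D = 1$ on the exceptional divisor of a single blowup, so $q = l-1-a \ge 1$ since $l\ge 2$ and $a\le 1/2$; for a composition of blowups one argues inductively.) Finally (c) — the uniform $L^p$ bound on $\log(\omega_i^n/\omega_X^n)$ with $p>n$ — follows from the explicit form of $\omega_i^n$ in the approximation scheme of \cite{CCHTST25}: $\log(\omega_i^n/\omega_X^n)$ is a smooth function converging to $f + q\log|s_D|$, which lies in every $L^p$ since $\log|s_D|\in L^p_{\loc}$ for all finite $p$ and the $\omega_i$-volumes are uniformly bounded by $\pi^*\{\omega\}^n + o(1)$; one extracts uniformity in $i$ from the quantitative estimates in \cite{CCHTST25}.

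The main obstacle is (a): pinning down that the exponent $q$ is genuinely $\ge 1$ and not merely $\ge 0$ requires understanding precisely how the resolution $\pi$ is chosen in \cite{CCHTST25} and comparing the vanishing order of its Jacobian along $D$ against the singularity exponent $2a$ of $e^F$. This is where the hypothesis $a\le 1/2$ (rather than just $a<1$) is used, and it is the one step where I would need to look carefully at the construction in \cite{CCHTST25} rather than quote it as a black box.
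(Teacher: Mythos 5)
Your overall route is the same as the paper's: existence and bounded potentials from \cite{BEGZ} (Theorems 3.1 and 4.1, using $e^F\in L^p(\omega_Y^n)$ for some $p>1$), the K\"ahler current property and the approximating metrics $\omega_i$ with properties (b), (c) quoted from \cite{CCHTST25}, and item (a) obtained by transforming the Monge--Amp\`ere equation under $\pi$ via the Jacobian $\pi^*\omega_Y^n=|s_D|^{2(l-1)}\omega_X^n$. Two points, however, need correction.

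First, your arithmetic in (a) is off and, as written, fails to prove $q\ge 1$ in the extremal case. The exponent of $|s_D|$ in $(\pi^*\omega)^n=e^f|s_D|^{q}\omega_X^n$ is $q=2(l-1)-2a$ (the Jacobian contributes $|s_D|^{2(l-1)}$ and $e^{\pi^*F}$ contributes $|s_D|^{-2a}$ up to a bounded factor); the inequality $q\ge 1$ then holds because $2(l-1)\ge 2$ and $2a\le 1$. Your claimed identity $q=l-1-a$ together with the assertion $l-1-a\ge 1$ is false for $l=2$, $a=1/2$ (it gives $1/2$), so either you have dropped a factor of $2$ or you are using the wrong threshold; in the paper's normalization the correct chain is $q=2l-2-2a\ge 4-2-1=1$. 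Second, the smoothness of $\omega$ on $Y\setminus Z$ does not follow from ``local elliptic regularity / Evans--Krylov / Schauder'' applied to a potential known only to be bounded: Evans--Krylov requires an a priori second-order (or uniform ellipticity) bound, and for the complex Monge--Amp\`ere equation such a bound is not a local consequence of an $L^\infty$ estimate on the potential. The paper supplies this step by applying Theorem 5.1 of \cite{BEGZ} (see Lemma \ref{positive current nef big}) to $\pi^*\omega$ in the nef and big class $\pi^*\{\omega\}$, whose non-K\"ahler locus is exactly $D$; the global Laplacian estimate with the barrier built from $s_D$ is what makes the Schauder bootstrap legitimate on $X\setminus D$, hence on $Y\setminus Z$. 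If you prefer to argue directly on $Y$, you must invoke a genuine interior-regularity theorem for bounded solutions of complex Monge--Amp\`ere with locally smooth positive density, not routine bootstrapping. The remaining parts of your write-up (integrability of $e^F$, bigness and nefness of $\pi^*\{\omega\}$, and the attribution of (b), (c) to \cite{CCHTST25}) agree with the paper.
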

\begin{proof}
$(1)$ The solution $\omega$ exists by Theorem 3.1 in \cite{BEGZ} and $\omega$ has bounded potentials by Theorem 4.1 in \cite{BEGZ}.
Let $\pi:X\to Y$ be a sequence of blowups of $Y$ along $Z$ such that $D=\pi^{-1}(Z)$ is a snc divisor. Let $s_D$ be a global section of a holomorphic line bundle $\mathcal{O}(D)$ so that ${\rm{div}}(s_D)=D$. Since $\pi^*\omega_Y^n=|s_D|^{2(l-1)}\omega_X^n$,  we see that $\pi^*\omega$ is a solution to the complex Monge-Amp\`{e}re equation $(\pi^*\omega)^n=e^f|s_D|^{2l-2-p}\omega_X^n$ where $f\in C^{\infty}(X)$. Since $l\ge 2$ and $1\ge p\ge 0$, we have $2l-2-p\ge 1$. Since the cohomology class $\{\pi^*\omega\}$ is nef and big, Theorem 5.1 in \cite{BEGZ} (refer to Lemma \ref{positive current nef big} below) shows that $\pi^*\omega$ is smooth on $X\setminus D$, since the exceptional divisor $D$ coincides with the nonK\"{a}hler locus of $\{\pi^*\omega\}$.\\
$(2)$ Since $F$ is smooth on $Y\setminus Z$, $e^F\in L^p(\omega_Y^n)$ for some $p>1$ and ${\rm{Ric}}(\omega)\ge -C\omega_Y$, the second assertion is shown by \cite{CCHTST25}. 
\end{proof}

\noindent All proofs in this paper hold for $T=\pi^*\omega$ in Assumption \ref{assumption2}. \\
We also consider the following conditions for currents:
\begin{assu}\label{assumption3}
Let $(X,\omega_0)$ be a compact K\"{a}hler manifold and $\alpha$ be a nef and big class such that $D=E_{nK}(\alpha)$ is a snc divisor. Let $T$ be a closed positive $(1,1)$-current in $\alpha$ satisfying $\langle T^n\rangle=e^f|s_D|^{2a}\omega_0^n$ where $a\ge 0$. 
\end{assu}
\begin{lemm}[\cite{BEGZ}]\label{positive current nef big}
A closed positive $(1,1)$-current $T$ in Assumption \ref{assumption3} always exists and it satisfies the following conditions:
\begin{enumerate}
\item $T$ is smooth K\"{a}hler on $X\setminus D$ and $T\ge |s_D|^{2m}\omega_0$ for some constant $m\ge0$, where $s_D$ is a canonical section of $D$.
\item There exists a sequence of K\"{a}hler metrics $\omega_i$ in $\alpha+(1/i)\omega_0$ such that
\begin{itemize}
\item $\omega_i\to T$ in $C^{\infty}_{\loc}(X\setminus D)$ and weakly on $X$,
\item $\omega_i^n=e^{F_i}\omega_0^n$ for some $F_i\in C^{\infty}(X)$ such that $\int_X|F_i|^pe^{F_i}\omega_0^n\le C$ for any $i$.
\end{itemize}
\end{enumerate}
\end{lemm}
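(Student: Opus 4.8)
The plan is to produce $T$ as a limit of solutions of genuine (non-degenerate) complex Monge--Amp\`ere equations on the K\"ahler classes $\alpha+\tfrac1i\omega_0$ and to carry the uniform estimates of \cite{BEGZ} through this limit; this is essentially the scheme behind Theorem \ref{monge ampere equation in nef and big}, the only new feature being that when $a>0$ the reference density $e^{f}|s_D|^{2a}$ is merely bounded rather than smooth, so the weight $|s_D|^{2a}$ must be built into the approximation. Throughout I use the approximate Zariski decomposition $\alpha=\{\omega_0\}+[D']$ with $\Supp D'=D=E_{nK}(\alpha)$ established above, which supplies a $\theta$-psh ($\theta\in\alpha$ smooth) comparison potential with logarithmic poles exactly along $D$.

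\emph{Existence and minimal singularities.} Normalizing $f$ by a constant so that $\int_X e^{f}|s_D|^{2a}\omega_0^n=\langle\alpha^n\rangle=\alpha^n$ (which is forced, since $\langle T^n\rangle$ has total mass $\langle\alpha^n\rangle$ for any $T\in\alpha$ with minimal singularities), the density $e^{f}|s_D|^{2a}$ is bounded, hence lies in $L^q(\omega_0^n)$ for every $q>1$, and charges no pluripolar set. By Theorems 3.1 and 4.1 of \cite{BEGZ} there is a closed positive $(1,1)$-current $T\in\alpha$ with $\langle T^n\rangle=e^{f}|s_D|^{2a}\omega_0^n$ and minimal singularities; in particular its local potentials are locally bounded on $\Amp(\alpha)=X\setminus D$.

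\emph{Approximating metrics and regularity on $X\setminus D$.} Pick $\delta_i\downarrow 0$ and solve, by the Calabi--Yau theorem, $\omega_i^n=e^{F_i}\omega_0^n$ with $e^{F_i}:=c_i\,e^{f}\,(|s_D|^2+\delta_i)^{a}\in C^\infty(X)$ and $\omega_i$ K\"ahler in $\alpha+\tfrac1i\omega_0$, where $c_i>0$ is the normalizing constant making the total masses agree (so $c_i\to1$). Since $t\mapsto t^{a}\bigl|\log t\bigr|^{p}$ is bounded on $(0,1]$ when $a>0$, and $\bigl|\log|s_D|\bigr|^{p}\in L^1(\omega_0^n)$ when $a=0$ ($D$ being snc), one gets $\int_X|F_i|^p e^{F_i}\omega_0^n\le C_p$ uniformly in $i$, for every $p<\infty$. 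The $L^\infty$/stability estimates of \cite{BEGZ} bound the normalized potentials of $\omega_i$ uniformly and give weak subsequential convergence; local stability of the complex Monge--Amp\`ere operator together with Schauder bootstrapping on $X\setminus D$, where the $e^{F_i}$ are smooth, strictly positive and converge in $C^\infty_{\loc}$, upgrade this to $\omega_i\to T$ in $C^\infty_{\loc}(X\setminus D)$ and weakly on $X$, the limit being exactly $T$ by uniqueness. This also shows $T$ is smooth K\"ahler on $X\setminus D$, and gives both bullets of $(2)$.

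\emph{The weighted lower bound --- the main obstacle.} The quantitative estimate $T\ge c\,|s_D|^{2m}\omega_0$ is where I expect the real work to lie; here I would follow the a priori estimate in the proof of Theorem \ref{monge ampere equation in nef and big} in \cite{BEGZ}. A Chern--Lu type inequality built from a fixed lower bound on the bisectional curvature of $\omega_0$ and from $\omega_i^n=e^{F_i}\omega_0^n$ gives $\Delta_{\omega_i}\log\Tr_{\omega_i}\omega_0\ge -A\,\Tr_{\omega_i}\omega_0-A$ on $X$ with $A$ independent of $i$; combining this via the maximum principle with the singular comparison potential attached to $\alpha=\{\omega_0\}+[D']$ (which has logarithmic poles precisely along $D$, uniformly in $i$) yields $\Tr_{\omega_i}\omega_0\le C\,|s_D|^{-2m_0}$ with $m_0,C$ independent of $i$. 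Together with $\omega_i^n/\omega_0^n=e^{F_i}\ge c\,(|s_D|^2+\delta_i)^{a}$ this forces $\omega_i\ge c\,|s_D|^{2m}\omega_0$ on $X$ for a fixed $m\ge0$; letting $i\to\infty$ and using the $C^\infty_{\loc}(X\setminus D)$ convergence gives $T\ge c\,|s_D|^{2m}\omega_0$ on $X\setminus D$, which after rescaling the metric on $\mathcal O(D)$ to absorb $c$ is $(1)$. The delicate points are the uniformity in $i$ of the constants $A,m_0,m,C$ --- for which one uses that $D'$ has snc support and that $\omega_0$ is fixed once and for all --- and the compatibility of the $\delta_i$-regularized densities with these estimates.
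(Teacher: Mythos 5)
Your overall scheme coincides with the paper's: existence and minimal singularities from \cite{BEGZ} Theorems 3.1 and 4.1, approximation by Yau solutions of regularized Monge--Amp\`ere equations in the K\"ahler classes $\alpha+\tfrac1i\omega_0$, the log-pole comparison potential coming from $\alpha=\{\omega_0\}+[D']$, and local elliptic bootstrapping off $D$ for the $C^{\infty}_{\loc}$ convergence. Your regularization $(|s_D|^2+\delta_i)^a$ of the density is, if anything, cleaner than the paper's $c_i(e^F+\tfrac1i)$, and your $L^p$ energy bound goes through as you say.

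The gap is in the step you yourself flag as the main obstacle. The Chern--Lu inequality $\Delta_{\omega_i}\log\Tr_{\omega_i}\omega_0\ge -A\,\Tr_{\omega_i}\omega_0-A$ does not follow from curvature bounds on $\omega_0$ and the equation $\omega_i^n=e^{F_i}\omega_0^n$ alone: it requires a uniform bound of the form $\Ric(\omega_i)\ge -C_1\omega_i-C_2\omega_0$ (the curvature of the \emph{target} $\omega_0$, which moreover must be bounded \emph{above}, only controls the other term). Here $\Ric(\omega_i)=\Ric(\omega_0)-dd^cF_i$ with $F_i=f+a\log(|s_D|^2+\delta_i)+\log c_i$, and for $a>0$ the positive part of $dd^c\log(|s_D|^2+\delta_i)$ blows up like $(|s_D|^2+\delta_i)^{-1}$ near $D$ as $\delta_i\to 0$; there is no a priori lower bound on $\omega_i$ in the conormal directions of $D$ that would let you absorb this into $C_1\omega_i$ --- the whole point of the lemma is that $\omega_i$ is expected to be \emph{small} along $D$. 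Since the relevant case in the paper has $a\ge 1/2$, the uniformity in $i$ of your constant $A$, and hence of $m_0$, is not justified. The paper (following the proof of \cite{BEGZ} Theorem 5.1) runs the second-order estimate in the opposite direction: it bounds $\Delta_{\omega_0}\varphi_i$, i.e.\ $\Tr_{\omega_0}\omega_i$, from above by $C|s_D|^{-2k}$ --- this needs only $\Delta_{\omega_0}F_i\ge -C$, which holds uniformly because $\log(|s_D|^2+\delta)$ is uniformly quasi-psh --- and then recovers the lower bound on each eigenvalue of $\omega_i$ by dividing the determinant $e^{F_i}\ge c|s_D|^{2a}$ by the product of the remaining eigenvalues. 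Your argument can be repaired by switching to this direction (or by the conical-metric device of adding $a\log(|s_D|^2+\delta_i)$ to the quantity being maximized), but as written the key estimate fails exactly where you need it.
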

\begin{proof}
Since the result follows from the proof of \cite{BEGZ} Theorem 5.1, we only sketch the proof.
Let $T$ be a closed positive $(1,1)$-current in $\alpha$ with minimal singularities which satisfies the complex  Monge-Amp\`{e}re equation
$$
\langle T^n\rangle=e^F\omega_0^n
$$
as in the statement. By \cite{BEGZ}, such $T$ exists and it is smooth K\"{a}hler on  $X\setminus D$.  We denote by $T=\theta+dd^c\varphi$. Let us define a smooth form by $\theta_i:=\theta+(1/i)\omega_0$ in a K\"{a}hler class $\alpha+(1/i)\omega_0$ and consider a smooth K\"{a}hler metric $\omega_i=\theta_i+dd^c\varphi_i$ defined by
$$
\omega_i^n=c_i(e^F+\frac{1}{i})\omega_0^n.
$$ 
Such $\omega_i$ exists by \cite{Yau78}.
We also consider $\varphi_{D'}$ defined by $[D']=\theta-\omega_0+dd^c\varphi_{D'}$. Then $\varphi_{D'}\ge \log|s_D|^{2k}-C$ for some $k>0$.
Here we remark that $F$ is upper bounded.
Then, by the proof of \cite{BEGZ} Theorem 5.1, we have
$$
\sqrt{-1}\Lambda_{\omega_0}dd^c\varphi_i=\Delta_{\omega_0}\varphi_i\le C_1\exp(C(C_2-\varphi_{D'}))\le C|s_D|^{-2k}
$$
for any $i>0$. Since $\theta_i\le \theta_1$ is smooth, we obtain $\sqrt{-1}\Lambda_{\omega_0}\omega_i\le C|s_D|^{-2k}$. Let us consider $\omega_0=\Sigma_{k=1}^n\sqrt{-1}dz_k\wedge d\overline{z_k}$ and $\omega_i=\Sigma_{k=1}^n\sqrt{-1}g_{i,k}dz_k\wedge d\overline{z_k}$ at a point $p$. Then we have $g_{i,k}\le C|s_D|^{-2k}$. Since $\omega_i$ satisfies the Monge-Amp\`{e}re equation, we have $g_{i,k}\ge C|s_D|^{2m}$ for some $m>0$. Hence we have $\omega_i\ge C|s_D|^{2m}\omega_0$ for all $i$. 

In \cite{BEGZ}, they showed that $\varphi_i\to \varphi$ in $L^1$-topology. Let us fix an Euclidean ball $B$ in a coordinate neighborhood in ${X}\setminus D$. Then, \cite{BEGZ} showed that there exists a constant $C_B>0$ such that the uniform estimate $\|\Delta_{\omega_0}\varphi_i\|_{L^{\infty}(B)}\le C_B$ holds for any $i$. It is also shown by \cite{GPTW24} that $\|\varphi_i\|_{L^{\infty}(B)}\le C_B$. Then, by the standard elliptic estimate (e.g. \cite{GilT} Theorem 9.11), we obtain the uniform $L^{p}_2$-estimate on $B$, that is, we obtain $\|\varphi_i\|_{L^p_2(B)}\le C_B$ for any $i$ and $p>0$. Then, the proof of \cite{Tru84} leads to the uniform $C^{2,\alpha}(B)$ estimate $\|\varphi_i\|_{C^{2,\alpha}(B)}\le C_B$. Then, by the classical Schauder estimates (e.g. \cite{GilT} Problem 6.1), we obtain the uniform $C^{k,\alpha}(B)$ estimate, that is, we have $\|\varphi_i\|_{C^{k,\alpha}(B)}\le C_B$ for any $k$. Therefore $\varphi_i$ converges to $\varphi$ locally smoothly on ${X}\setminus D$.
\end{proof}
\noindent If we can prove (\ref{L-infty estimate laplacian}) in Proposition \ref{L-infty estimate} for $T$ in Assumption \ref{assumption3}, then all remaining results in this paper valid for such $T$. 
We will use the following result.
\begin{lemm}[\cite{GPSS23} Theorem 2.1]\label{sob ineq}
Let $\omega_i$ be a sequence of K\"{a}hler metrics in Assumption \ref{assumption2} or Lemma \ref{positive current nef big}. Then there exist constants $q>1$ and $C>0$ which are independent of $i$ such that for any $\omega_i$ and any $u\in L^2_1(X)_{\omega_i}$, we have the following Sobolev-type inequality
$$
\left(\frac{1}{V_{\omega_i}}\int_X|u-\overline{u}|^{2q}\omega_i^n \right)^{1/q}\le \frac{C}{V_{\omega_i}}\int_X|\nabla u|^2_{\omega_i}\omega_i^n
$$
where $\overline{u}=\frac{1}{V_{\omega_i}}\int_Xu\omega_i^n$ is the average of u over $(X,\omega_i)$.
\end{lemm}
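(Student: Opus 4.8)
The plan is to deduce the inequality directly from the uniform Sobolev inequality of Guo--Phong--Song--Sturm, so that the only real work is to check that the hypotheses of \cite{GPSS23} Theorem~2.1 hold along the sequence $\{\omega_i\}$ with constants independent of $i$ --- in particular with constants insensitive to the codimension-one degeneration of the limiting current $T$. Recall that the data feeding into that theorem are: a fixed compact K\"ahler background $(X,\omega_0)$ (here $\omega_0=\omega_X$ in the setting of Assumption~\ref{assumption2}); K\"ahler metrics $\omega$ whose classes $\{\omega\}$ range over a bounded family of nef classes; a uniform positive lower bound $V_\omega=\int_X\omega^n\ge c_0>0$; and a uniform entropy bound $\int_X\bigl|\log(\omega^n/\omega_0^n)\bigr|^p\,\omega^n\le C$ for some fixed $p>n$. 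Granting these, the theorem produces the asserted Poincar\'e--Sobolev inequality for $u-\overline u$ with an exponent $q>1$ and constant $C$ depending only on $n$, $(X,\omega_0)$, the bound on the classes, $p$, $C$ and $c_0$, hence independent of $i$.

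I would then verify the three uniform bounds. Boundedness of the classes: in both Assumption~\ref{assumption2} and Lemma~\ref{positive current nef big} one has $\omega_i\in\alpha+\tfrac1i\omega_0$ (with $\alpha=\pi^*\{\omega\}$ in the former), so $\{\omega_i\}\le\{\omega_1\}$ and $\{\omega_i\}\to\alpha$; since each $\alpha+\tfrac1i\omega_0$ is K\"ahler, the $\{\omega_i\}$ form a bounded family of nef classes. Volume non-collapsing: $V_{\omega_i}=\{\omega_i\}^n=(\alpha+\tfrac1i\omega_0)^n\to\alpha^n=\langle\alpha^n\rangle>0$, using that $\alpha$ is nef and big; hence $V_{\omega_i}\ge c_0>0$ for all $i$. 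The entropy bound: this is precisely Lemma~\ref{regularity of omega}(c) in the setting of Assumption~\ref{assumption2}, and, in the setting of Assumption~\ref{assumption3}, it is the last bullet of Lemma~\ref{positive current nef big} rewritten through $\omega_i^n=e^{F_i}\omega_0^n$, so that $\int_X|F_i|^p e^{F_i}\omega_0^n=\int_X|\log(\omega_i^n/\omega_0^n)|^p\,\omega_i^n\le C$ uniformly in $i$ with $p>n$. Plugging these into \cite{GPSS23} Theorem~2.1 and applying it to each $\omega_i$ gives the inequality with one common pair $(q,C)$.

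The only delicate input --- and the sole place where the degenerate geometry could interfere --- is the entropy bound: one needs $L^p$-control of $\log(\omega_i^n/\omega_0^n)$ with exponent \emph{strictly} greater than $n$ to persist as $i\to\infty$, and it is exactly to secure this that Assumption~\ref{assumption2} (via \cite{CCHTST25}) and Lemma~\ref{positive current nef big} (via \cite{BEGZ}) are stated as they are. The other two inputs are purely cohomological and completely blind to the blow-down locus $D$; in particular no control on the potentials of $\omega_i$ near $D$ is required. If a self-contained argument were preferred one could instead rerun the auxiliary complex Monge--Amp\`ere comparison of \cite{GPSS23} in the present setting, but since all of their hypotheses hold verbatim, quoting the theorem is cleanest.
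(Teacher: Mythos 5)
The paper offers no proof of this lemma at all --- it is quoted directly as \cite{GPSS23} Theorem~2.1 --- and your proposal does exactly what the paper implicitly relies on: checking that the nef-class boundedness, the volume non-collapsing from bigness of $\alpha$, and the $p>n$ entropy bound from Lemma~\ref{regularity of omega}(c) (resp.\ the last bullet of Lemma~\ref{positive current nef big}) put the sequence $\omega_i$ within the scope of that theorem with $i$-independent constants. Your verification is correct and is essentially the same approach, just spelled out more explicitly than the paper bothers to.
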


\subsection{Proof of Theorem \ref{toward nef and big class}}
We again state Theorem \ref{toward nef and big class}.
\begin{theo}[=Corollary \ref{Uhlenbeck compactness} and Proposition \ref{nontrivial limit}]\label{limiting objects}
Let us work under Assumption \ref{assumption2}. Then there exists the following data:
\begin{itemize}
\item (Corollary \ref{Uhlenbeck compactness})  An analytic subset $\Sigma$ in $X\setminus D$ of $\codim \Sigma \ge 2$. Denote by $Z=D\cup \Sigma$.
\item  (Corollary \ref{Uhlenbeck compactness})  A  holomorphic hermitian vector bundle $E_{\infty}$ on $X\setminus Z$ whose underlying complex hermitian vector bundle coincides with the underlying complex hermitian vector bundle of $(E|_{X\setminus Z},h_0)$.
\item  (Corollary \ref{Uhlenbeck compactness}) A $T$-admissible HYM connection  $\nabla_{\infty}$ over $(E_{\infty}, h_0)$ and
\item (Proposition \ref{nontrivial limit}) nontrivial sheaf morphisms $\Phi_{\infty}:E_{\infty}\to E|_{X\setminus Z}$ and $\Psi_{\infty}: E|_{X\setminus Z}\to E_{\infty}$.
\end{itemize}
\end{theo}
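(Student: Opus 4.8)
The plan is to run a Uhlenbeck-type compactification argument on the approximating sequence, following the scheme of \cite{Chen} but on $X$ with the degenerate metrics $\omega_i$ instead of on $Y$. First I would, for each $i$, solve the $\omega_i$-Hermitian-Yang-Mills problem: since $E$ is $\alpha^{n-1}$-slope stable, Lemma \ref{approximating stability} should give $\{\omega_i\}^{n-1}$-slope stability of $E$ for $i$ large (the slopes vary continuously in the class, and strict inequalities are open), so the classical Donaldson--Uhlenbeck--Yau theorem produces an $\omega_i$-HYM metric $h_i$ on $E$ with HYM constant $\lambda_i$. The constants $\lambda_i$ are determined by $\deg_{\omega_i}(E)/\rk E$ and hence converge to the expected constant $\lambda_\infty$ computed from $\alpha$. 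Normalizing $h_i$ against the fixed background $h_0$ and setting $s_i = h_0^{-1}h_i \in \Herm(E,h_0)$, the HYM equation becomes a uniformly elliptic equation for $\det s_i$ away from $D$; the Sobolev inequality of Lemma \ref{sob ineq}, which is uniform in $i$, is exactly what is needed to run a Moser iteration for the $C^0_{\loc}$-bound on $X\setminus D$ together with the $L^p$-bound on $\log(\omega_i^n/\omega_X^n)$ from Assumption \ref{assumption2}.

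Next I would apply the Uhlenbeck compactness theorem to the sequence of HYM connections $\nabla_{h_i}$, whose curvatures have uniformly bounded $L^2$-norms (the $T$-admissible bound $\int |F|_T^2 T^n$ is the weak limit of $\int |F_{h_i}|_{\omega_i}^2 \omega_i^n = c\,\lambda_i^2 \langle\alpha^n\rangle$, a bounded sequence). This yields, after passing to a subsequence, an analytic bubbling locus $\Sigma_0 \subset X\setminus D$ of Hausdorff codimension $\ge 4$ — but since we only need $\codim_{\mathbb C}\Sigma \ge 2$ I would take $\Sigma$ to be the analytic set obtained from \cite{CW22} Corollary 48, on the complement of which the $h_i$ (after gauge transformations $u_i$) converge in $C^\infty_{\loc}$ to a smooth HYM connection $\nabla_\infty$ on a holomorphic hermitian bundle $E_\infty$; on $X\setminus(D\cup\Sigma)$ the underlying complex bundle of $E_\infty$ is identified with $E|_{X\setminus Z}$. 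That $\nabla_\infty$ is $T$-admissible follows from Fatou's lemma applied to $\int_{X\setminus Z}|F_{\nabla_\infty}|_T^2 T^n \le \liminf_i \int_X |F_{h_i}|_{\omega_i}^2 \omega_i^n < \infty$, using the local smooth convergence and the weak convergence $\omega_i \to T$.

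Finally, for the nontrivial morphisms: the gauge transformations $u_i$ relating $h_i$ to the limit are maps $E|_{X\setminus Z}\to E_\infty$, and I would normalize them, say $\sup |u_i| = 1$ after rescaling, and control $\int |\nabla u_i|^2$ via the HYM equations satisfied on both ends (a Weitzenböck/Bochner identity bounds the Laplacian of $|u_i|^2$ by curvature terms that are $L^2$-bounded). Passing to the limit one extracts a nonzero $\Psi_\infty: E|_{X\setminus Z}\to E_\infty$; applying the same argument to $u_i^{-1}$, or using the a priori bound on $\det u_i$ coming from the matching of HYM constants, gives a nonzero $\Phi_\infty: E_\infty \to E|_{X\setminus Z}$. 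Both are holomorphic (hence sheaf morphisms) because the limiting connections are integrable and the convergence is $C^\infty_{\loc}$. The main obstacle I anticipate is \emph{nonvacuity of the limit} — ensuring $\Psi_\infty \not\equiv 0$ — because the normalization that keeps $u_i$ bounded below in $L^2$ or $C^0$ must be compatible with the degeneration of $\omega_i$ along $D$ and with the potential escape of energy into $\Sigma$; this is where one must use stability of $E$ most essentially (as in \cite{Chen}), arguing that a vanishing limit would force the connections $\nabla_{h_i}$ to decouple, contradicting the uniform lower bound on the analytic torsion/determinant coming from the fixed degree, and here the codimension-one degeneracy locus $D$ is the genuinely new point not covered by \cite{Chen}.
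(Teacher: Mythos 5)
Your overall architecture matches the paper's: approximate $\alpha$ by K\"ahler classes $\alpha_i=\alpha+(1/i)\omega_0$, use Lemma \ref{approximating stability} and Donaldson--Uhlenbeck--Yau to get $\omega_i$-HYM metrics $h_i=h_0H_i$, conjugate to get HYM connections $\nabla_i=H_i^{1/2}\circ\nabla_{h_i}\circ H_i^{-1/2}$ on the \emph{fixed} hermitian bundle $(\underline{E},h_0)$ (you apply Uhlenbeck compactness to $\nabla_{h_i}$ directly, which live on varying hermitian bundles; the conjugation is the standard fix and is what the paper does in Lemma \ref{construction of HYM conns}), bound $\int_X|F_{\nabla_i}|^2_{\omega_i}\omega_i^n$ by Chern--Weil (your formula drops the $c_2$ term but the uniform bound is correct), and invoke \cite{CW22}/\cite{Nak88} to produce $\Sigma$, $E_\infty$, $\nabla_\infty$. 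Up to that point your proposal is essentially the paper's proof of Corollary \ref{Uhlenbeck compactness}.

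The genuine gap is in the last step, the nontriviality of $\Phi_\infty$ and $\Psi_\infty$, which you correctly flag as the main obstacle but do not resolve. Your proposed normalization $\sup_X|u_i|=1$ does not prevent the limit from vanishing on $X\setminus Z$: the supremum could be attained at points drifting into $D$ or into the bubbling set $\Sigma$, and then the $C^\infty_{\loc}(X\setminus Z)$ limit could be identically zero. Your suggested remedy --- that a vanishing limit would ``decouple'' the connections and contradict stability or a determinant bound --- is not a proof and is not the mechanism the paper uses; stability of $E$ plays no role at this stage (it is used only later, to upgrade $\Psi_\infty$ to an isomorphism). The paper's argument runs the other way around: take $\Phi_i=H_i^{-1/2}$ and $\Psi_i=\Phi_i^{-1}$, normalize in $L^2$ so that $\int_X|\Phi_i|^2\omega_i^n=1$, and then prove a \emph{global, $i$-uniform} $L^\infty$ bound $\sup_X|\Phi_i|\le C$ by Moser iteration applied to $u_i=\log(|\Phi_i|^2+1)$, starting from $\Delta_{\omega_i}u_i\ge -N$ and using the $i$-uniform Sobolev inequality of \cite{GPSS23} (Proposition \ref{L-infty estimate}). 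Nonvanishing of the $L^2_{\loc}$ limit is then forced: if $\Phi_\infty\equiv 0$ on $X\setminus Z$, the unit $L^2$ mass would have to concentrate on $Z$, but $\sup|\Phi_i|\le C$ together with $\omega_i^n\to\langle T^n\rangle$ weakly and $\langle T^n\rangle(Z)=0$ gives $\lim_\varepsilon\lim_i\int_{Z^\varepsilon}|\Phi_i|^2\omega_i^n=0$, a contradiction (Claim \ref{nontrivial limit lemma}). This quantitative $L^\infty$ estimate --- which is exactly where the uniform Sobolev inequality for the degenerating family $\omega_i$ enters --- is the missing ingredient in your proposal.
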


The proof of Theorem \ref{limiting objects} is divided in the following two subsections.

\subsubsection{Uhlenbeck limit of the HYM connections}
Although the following lemma was proved in \cite{Jin25}, we include the proof for the completeness. The proof of the following lemma also works if $\alpha$ is merely big, not necessarily nef. 
\begin{lemm}\label{max slope}
$(1)$ Let $\eta\in H^{2n-2}(X,\mathbb{R})$ be a cohomology class which is represented by a positive $(2n-2)$-current.
Then there is a constant $C>0$ such that for any subsheaf $\mathcal{F}\subsetneq E$, the following inequality holds:
$$
\int_Xc_1(\mathcal{F})\wedge\eta\le C.
$$
\noindent $(2)$ There is a nontrivial subsheaf $\mathcal{F}_0\subsetneq E$ such that
$$
\mu_{\alpha}(\mathcal{F}_0) = \sup\{\mu_{\alpha}(\mathcal{F})\mid 0\neq \mathcal{F}\subsetneq E\}.
$$
\end{lemm}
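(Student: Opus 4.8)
The plan is to prove (1) by bounding the $\eta$--degree of an arbitrary subsheaf from above by a constant independent of the subsheaf, using a Poincar\'e--Lelong estimate on its determinant, and then to obtain (2) by a compactness argument for a maximizing sequence.

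\emph{Part (1).} Since $E$ is locally free, a nonzero subsheaf $\mathcal{F}\subsetneq E$ is torsion free of some rank $r$ with $1\le r\le\rk E$, and $c_1(\mathcal{F})=c_1(\det\mathcal{F})$. The generically injective map $\bigwedge^r\mathcal{F}\to\bigwedge^r E$, after killing torsion and passing to the reflexive hull, induces an injection $\det\mathcal{F}\hookrightarrow V:=\bigwedge^r E$, equivalently a nonzero holomorphic section $\phi$ of $W:=\Hom(\det\mathcal{F},V)$. Fix a K\"ahler form $\omega$, smooth hermitian metrics on $E$, on $\det\mathcal{F}$ and on $V$, and give $W$ the induced metric; then $|\phi|$ is bounded on $X$ by a constant independent of $\mathcal{F}$. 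The Poincar\'e--Lelong inequality for holomorphic sections of hermitian vector bundles gives, as currents on $X$,
$$
dd^c\log|\phi|^2\;\ge\;c_1(\det\mathcal{F},h_{\det\mathcal{F}})-C_r\,\omega ,
$$
where $C_r$ bounds the Chern curvature of $\bigwedge^rE$ relative to $\omega$ and does not depend on $\mathcal{F}$. Hence $dd^c\log|\phi|^2+C_r\,\omega-c_1(\det\mathcal{F},h_{\det\mathcal{F}})$ is closed and positive and represents $C_r\{\omega\}-c_1(\mathcal{F})$, so this class is pseudoeffective. Pairing with $\eta=\alpha^{n-1}$ (the case needed below), which is a limit of the classes $(\alpha+\varepsilon\{\omega_0\})^{n-1}$, each carrying the smooth positive representative $\omega_\varepsilon^{n-1}$ for $\omega_\varepsilon\in\alpha+\varepsilon\{\omega_0\}$ K\"ahler, we wedge the positive $(1,1)$--current with $\omega_\varepsilon^{n-1}$ and let $\varepsilon\downarrow0$ to get $\bigl(C_r\{\omega\}-c_1(\mathcal{F})\bigr)\cdot\eta\ge0$; for a general $\eta$ carried by a positive $(2n-2)$--current one argues similarly. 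Therefore $\int_Xc_1(\mathcal{F})\wedge\eta\le C_r\int_X\omega\wedge\eta$, and $C:=\max_{1\le r\le\rk E}C_r\int_X\omega\wedge\eta$ works.

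\emph{Part (2).} By (1) with $\eta=\alpha^{n-1}$, $\mu_{\max}:=\sup\{\mu_\alpha(\mathcal{F})\mid 0\ne\mathcal{F}\subsetneq E\}<\infty$. Take $\mathcal{F}_j\subsetneq E$ with $\mu_\alpha(\mathcal{F}_j)\to\mu_{\max}$; replacing $\mathcal{F}_j$ by its saturation does not decrease the slope and keeps it proper (a full--rank saturated subsheaf of the reflexive sheaf $E$ is $E$), so after passing to a subsequence all $\mathcal{F}_j$ are saturated of one rank $r<\rk E$ with $\deg_\alpha\mathcal{F}_j\to r(n-1)!\,\mu_{\max}$. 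Following Uhlenbeck--Yau, represent $\mathcal{F}_j$ by the orthogonal projection $\pi_j\in L^2_1(\End E)$ onto it, a weakly holomorphic projection with $\|\pi_j\|_{L^\infty}\le\sqrt{\rk E}$; the Gauss--Codazzi identity reads $\sqrt{-1}\,\Tr\Theta_{\mathcal{F}_j}=\Tr(\pi_j\,\sqrt{-1}\,\Theta_E)-\Gamma_j$ with $\Gamma_j\ge0$ the curvature term produced by the second fundamental form $\delbar\pi_j$. Integrating this against a positive representative of $\alpha^{n-1}$, and using that $\Theta_E$ is fixed and $\deg_\alpha\mathcal{F}_j$ converges, one bounds $\|\delbar\pi_j\|_{L^2}$ uniformly, hence $\|\pi_j\|_{L^2_1}$ uniformly. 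Extract a weak $L^2_1$ limit $\pi_\infty$; by the Uhlenbeck--Yau regularity theorem $\pi_\infty$ is again a weakly holomorphic projection, of rank $r$, defining a saturated $\mathcal{F}_0\subsetneq E$, and weak lower semicontinuity of the $L^2$ norm of $\delbar\pi_j$ gives $\deg_\alpha\mathcal{F}_0\ge\limsup_j\deg_\alpha\mathcal{F}_j=r(n-1)!\,\mu_{\max}$. As $\mathcal{F}_0\subsetneq E$ forces $\mu_\alpha(\mathcal{F}_0)\le\mu_{\max}$, we conclude $\mu_\alpha(\mathcal{F}_0)=\mu_{\max}$.

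\emph{Main obstacle.} The delicate point is the uniform bound on the second fundamental forms in Part (2). Because $\alpha$ is only nef, $\Gamma_j$ is paired in the degree formula against $\alpha^{n-1}$, which has no smooth positive representative, so a naive computation controls only a sign--indefinite integral of $\Gamma_j$ rather than $\|\delbar\pi_j\|^2_{L^2(\omega_0)}$; equivalently, boundedness of the $\alpha$--degrees of the $\mathcal{F}_j$ does not by itself bound their $\omega_0$--degrees (subsheaves of a fixed bundle can have $\omega_0$--degree tending to $-\infty$), so the maximizing family need not be relatively compact. To go beyond the classical K\"ahler case, I would combine (1) with the approximating K\"ahler classes $\alpha+\varepsilon\{\omega_0\}$ — for which the existence of a maximal--slope subsheaf is classical and, by (1), $\mu_{\max}^{\alpha+\varepsilon\{\omega_0\}}(E)\to\mu_{\max}^{\alpha}(E)$ uniformly as $\varepsilon\downarrow0$ — and pass to the limit, controlling the variation of the maximizer (or, alternatively, invoking Bishop--type compactness for the graph cycles of the $\mathcal{F}_j$ in the Grassmann bundle $\mathrm{Gr}_r(E)$, whose volumes are bounded using (1)). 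This is the step that genuinely uses that the cone is nef rather than merely pseudoeffective.
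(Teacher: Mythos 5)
Your Part (1) follows the right general idea (curvature decreases in subsheaves), but it detours through the claim that the pseudoeffective class $C_r\{\omega\}-c_1(\mathcal{F})$ pairs nonnegatively with \emph{any} class $\eta$ represented by a positive $(2n-2)$-current. You verify this only for $\eta=\alpha^{n-1}$, which has smooth semipositive approximants, and say ``one argues similarly'' for general $\eta$. That similar argument does not exist: a pseudoeffective $(1,1)$-class paired with a positive-current class can be negative (on a surface, take $\eta=\{C\}=[C]$ for a $(-1)$-curve $C$; the class $\{C\}$ is pseudoeffective and represented by a positive current, yet $\{C\}\cdot\{C\}=-1$). The obstruction in your scheme is the Stokes step: $\int_X dd^c\log|\phi|^2\wedge S$ need not vanish (or even be defined) when $S$ is a non-smooth positive current, since $\log|\phi|^2$ has poles. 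The paper avoids this by estimating directly: writing the Chern current of $\det\mathcal{F}$ via the projection $p$ as $\sqrt{-1}\Tr(p\circ F_h\circ p+\delbar p\wedge\partial p)$, discarding the nonpositive second-fundamental-form term against $\eta\ge0$, and bounding $\sqrt{-1}\Tr(pF_hp)\le \|F_h\|_{L^\infty}\rk(\mathcal{F})\,\omega_0$ pointwise, so the upper bound is $C\int_X\omega_0\wedge\eta$ with no sign question about a mixed intersection number. The general-$\eta$ case is not optional: the paper's proof of (2) applies (1) to rational classes $\eta_1,\dots,\eta_k$ represented by positive currents that are not powers of nef classes.

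Your Part (2) is a genuinely different route (Uhlenbeck--Yau compactness of weakly holomorphic projections for a maximizing sequence), and, as you yourself note in the ``main obstacle'' paragraph, it does not close: pairing the Gauss--Codazzi identity against $\alpha^{n-1}$, which has no smooth strictly positive representative, fails to bound $\|\delbar\pi_j\|_{L^2(\omega_0)}$, so neither the weak $L^2_1$ limit nor the Bishop-type volume bound for the graph cycles is available; the two remedies you sketch are not carried out and each runs into the same missing compactness. The paper sidesteps analysis entirely with an integrality argument: choose a basis $\eta_1,\dots,\eta_k$ of $H^{2n-2}(X,\mathbb{Q})$ of classes represented by positive currents with $\alpha^{n-1}=\sum_{i<k}a_i\eta_i-a_k\eta_k$, $a_i\ge0$; part (1) bounds each $\mu_{\eta_i}(\mathcal{F})$ above, and the decomposition then forces lower bounds; since $\int_Xc_1(\mathcal{F})\wedge m\eta_i\in\mathbb{Z}$ for suitable $m$, the set of achievable slopes above any threshold is finite and the supremum is attained. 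You should either adopt that discreteness argument or supply the missing compactness; as written, both halves of your proposal have unresolved gaps.
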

\begin{proof}
$(1)$ The inclusion $\mathcal{F}\subset E$ induces a sheaf morphism $\iota:\det\mathcal{F}\to \bigwedge^{\rk\mathcal{F}}E$ which is injective on the locally free locus of $\mathcal{F}$ which is of codimension at least $3$. Since $\det\mathcal{F}$ is torsion free, $\iota$ is injective. Furthermore $\det\mathcal{F}$ is a subbundle of $\bigwedge^{\rk \mathcal{F}}E$ since $\det\mathcal{F}$ is a subbundle away from an analytic subset of codimension 3 and its saturation gives a subbundle.
Let $h$ be a hermitian metric on $\bigwedge^{\rk\mathcal{F}} E$. We denote by $h_{\det\mathcal{F}}$ the restriction of $h$ to $\det\mathcal{F}$. Then the curvature of $h_{\det\mathcal{F}}$ satisfies $F_{h_{\det\mathcal{F}}}=p\circ F_h \circ p + \delbar p\wedge \partial p$ where $p:\bigwedge^{\rk \mathcal{F}}E\to \bigwedge^{\rk\mathcal{F}}E$ is the $h$-orthogonal projection to $\det\mathcal{F}$ (e.g. \cite{Kob}).  Thus we can calculate as follows:
\begin{align*}
\int_Xc_1(\mathcal{F})\wedge\eta
&=\int_X\sqrt{-1}\Tr(p\circ F_h \circ p + \delbar p\wedge \partial p)\wedge\eta\\
&\le \sqrt{-1}\int_X\Tr(p\circ F_h\circ p)\wedge \eta\\
&\le\|F_h\|_{L^{\infty}}\rk(\mathcal{F})\int_X\omega_0\wedge\eta=C.
\end{align*}
$(2)$ Since we only consider the supremum, we can assume $-C< \mu_{\alpha}(\mathcal{F})$ for $\mathcal{F}\subsetneq E$. We can find elements $\eta_1,\ldots, \eta_k\in H^{2n-2}(X,\mathbb{Q})$ which forms a basis of $H^{2n-2}(X,\mathbb{Q})$ such that
\begin{itemize}
\item each $\eta_i$ is represented by a positive $(2n-2)$-current,
\item $\eta_1,\ldots, \eta_{k-1}$ lies near $\alpha^{n-1}$ and
\item there are nonnegative constants $a_1,\ldots, a_k\ge 0$ such that $\alpha^{n-1}=\Sigma_{i=1}^{k-1}a_i\eta_i-a_k\eta_k.$
\end{itemize}
These $\eta_i$ exist since $H^{2n-2}(X,\mathbb{Q})$ is dense in $H^{2n-2}(X,\mathbb{R})$ and the set $\{\alpha^{n-1}-\Sigma_{i=1}^{k-1}a_i\eta_i\mid a_i\in (1-\varepsilon,1+\varepsilon)\}$ is an open set.
By $(1)$, we have a constant $C>0$ so that $-C<\mu_{\alpha}(\mathcal{F})< C$ holds for  subsheaf $\mathcal{F}$, since $\alpha^{n-1}$ is represented by a positive current $\langle T_{\min}^{n-1}\rangle$ where $T_{\min}$ is a closed positive $(1,1)$-current with minimal singularities in $\alpha$.  Thus we also have $-C< \mu_{\eta_i}(\mathcal{F})<C$ for any $i=1,\ldots k-1$ and $\mathcal{F}\subsetneq E$ since for each $\mathcal{F}$, the slope $\mu_{\eta}(\mathcal{F})$ is a continuous function in $\eta\in H^{2n-2}(X,\mathbb{R})$. It implies the boundedness $-C<\mu_{\eta_k}(\mathcal{F})<C$. In fact, the upperboundedness follows from $(1)$. If there is no lower bound, there is a sequence of subsheaves $(\mathcal{F}_j)_j$ such that $\mu_{\eta_k}(\mathcal{F}_j)$ monotonically decreasing to $-\infty$. Then, the inequality
$-C<\mu_{\alpha}(\mathcal{F})=\Sigma_{i=1}^{k-1}a_i\mu_{\eta_i}(\mathcal{F})-a_k\mu_{\eta_k}(\mathcal{F})<C$ implies $\Sigma_{i=1}^{k-1}a_i\mu_{\eta_i}(\mathcal{F})$ monotonically decreases to $-\infty$. Since each $a_i$ is non-negative, it contradicts to the existence of a lower bound of $\mu_{\eta_i}(\mathcal{F})$ for $i=1,\ldots, k-1$. Since $\eta_i$ are $\mathbb{Q}$-cohomology classes, there is a positive integer $m$ such that $m\eta_i$ are $\mathbb{Z}$-cohomology classes. Of course $\mu_{m\eta_i}(\mathcal{F})$ is bounded. We also remark that $\int_Xc_1(\mathcal{F})\wedge m\eta_i$ is an integer. Then, we can see that
$$
\{\mu_{m\eta_i}(\mathcal{F})\mid 0\neq \mathcal{F}\subsetneq E, \mu_{\eta_i}(\mathcal{F})> -C\}\subset [-C,C]\cap \mathbb{Z}
$$
and the LHS is a finite set. Thus we obtain that
$$
\{\mu_{\alpha}(\mathcal{F})\mid 0\neq \mathcal{F}\subsetneq E, \mu_{\alpha}(\mathcal{F})> -C\}
$$
is a finite set and thus there is a nontrivial subsheaf $\mathcal{F}_0\subsetneq E$ which attains the supremum of the $\alpha^{n-1}$-slope.
\end{proof}
We remember that $E$ is $\alpha^{n-1}$-slope stable. Let us define K\"{a}hler classes $\alpha_i:=\alpha+(1/i)\omega_0$ for $i>0$.
\begin{lemm}\label{approximating stability}
$E$ is $\alpha_i^{n-1}$-slope stable for $i \gg 0$.
\end{lemm}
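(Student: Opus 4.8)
Here is how I would prove Lemma~\ref{approximating stability}.

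The plan is to compare, for each saturated proper subsheaf $\mathcal F\subsetneq E$, its $\alpha_i^{n-1}$-slope with its $\alpha^{n-1}$-slope, and to exploit that the perturbation $\alpha_i-\alpha=(1/i)\{\omega_0\}$ moves every such slope by an amount tending to $0$ uniformly in $\mathcal F$, while the $\alpha^{n-1}$-slope stability of $E$ supplies a fixed positive gap below $\mu_\alpha(E)$. First I would record that gap: by the finiteness shown in the proof of Lemma~\ref{max slope}$(2)$, the set $\{\mu_\alpha(\mathcal F)\mid 0\neq\mathcal F\subsetneq E\text{ saturated},\ \mu_\alpha(\mathcal F)>-C\}$ is finite, and since $E$ is $\alpha^{n-1}$-slope stable each of its elements is $<\mu_\alpha(E)$, while the remaining saturated proper subsheaves satisfy $\mu_\alpha(\mathcal F)\le-C<\mu_\alpha(E)$. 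Hence there is $\delta>0$ with $\mu_\alpha(\mathcal F)\le\mu_\alpha(E)-\delta$ for every saturated proper subsheaf $\mathcal F\subsetneq E$.

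Next I would build a closed positive $(2n-2)$-current $\eta_i$ in the class $\tfrac1{(n-1)!}(\alpha_i^{n-1}-\alpha^{n-1})$ of mass $O(1/i)$. Expanding
\[
\alpha_i^{n-1}-\alpha^{n-1}=\sum_{k=1}^{n-1}\binom{n-1}{k}i^{-k}\,\alpha^{n-1-k}\cdot\{\omega_0\}^{k},
\]
it suffices to represent each mixed product $\alpha^{n-1-k}\cdot\{\omega_0\}^{k}$, $k\ge1$, by a closed positive current: since $\alpha$ is nef, $(\alpha+t\{\omega_0\})^{n-1-k}\cdot\{\omega_0\}^{k}$ is represented by a wedge of K\"ahler forms, a closed positive $(n-1,n-1)$-form whose mass is cohomological (hence bounded independently of $t$), and a weak limit as $t\to0$ lies in $\alpha^{n-1-k}\cdot\{\omega_0\}^{k}$. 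Summing with the coefficients $\binom{n-1}{k}i^{-k}$ gives $\eta_i$, and $\int_X\omega_0\wedge\eta_i=\tfrac1{(n-1)!}\sum_{k\ge1}\binom{n-1}{k}i^{-k}\,\alpha^{n-1-k}\cdot\{\omega_0\}^{k+1}\le C_1/i$.

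Finally I would apply Lemma~\ref{max slope}$(1)$ with $\eta=\eta_i$. Retracing its proof, for every subsheaf $\mathcal F\subsetneq E$ one gets $\int_X c_1(\det\mathcal F)\wedge\eta_i\le\|F_{h_r}\|_{L^\infty}\,r\int_X\omega_0\wedge\eta_i$, where $r=\rk\mathcal F$ and $h_r$ is a fixed metric on $\bigwedge^{r}E$; as $r$ ranges over the finitely many values $1,\dots,\rk E-1$ this yields $\mu_{\alpha_i}(\mathcal F)-\mu_\alpha(\mathcal F)=\tfrac1{\rk\mathcal F}\int_X c_1(\det\mathcal F)\wedge\eta_i\le\varepsilon_i$ with $\varepsilon_i\to0$ independent of $\mathcal F$. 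Since $E$ is a single fixed sheaf, $|\mu_{\alpha_i}(E)-\mu_\alpha(E)|\le\varepsilon_i'\to0$ as well. Combining with the gap, for any saturated proper $\mathcal F\subsetneq E$,
\[
\mu_{\alpha_i}(\mathcal F)\le\mu_\alpha(E)-\delta+\varepsilon_i,\qquad \mu_{\alpha_i}(E)\ge\mu_\alpha(E)-\varepsilon_i',
\]
so $\mu_{\alpha_i}(\mathcal F)<\mu_{\alpha_i}(E)$ once $\varepsilon_i+\varepsilon_i'<\delta$, i.e. for $i\gg0$, which is exactly $\alpha_i^{n-1}$-slope stability.

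I expect the main obstacle to be the middle step: verifying that the mixed intersection classes $\alpha^{n-1-k}\cdot\{\omega_0\}^{k}$ are represented by closed positive currents of controlled mass, and that the ensuing estimate from Lemma~\ref{max slope}$(1)$ is genuinely uniform over all $\mathcal F\subsetneq E$. The gap step and the final comparison are formal once Lemma~\ref{max slope} is available. One could alternatively fix K\"ahler forms $\omega_i\in\alpha_i$ and estimate $\int_X c_1(\det\mathcal F)\wedge\omega_i^{n-1}$ directly, avoiding the limit over $t$, but the difference of classes is most transparent as written.
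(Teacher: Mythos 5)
Your proof is correct and follows essentially the same route as the paper: expand $\alpha_i^{n-1}-\alpha^{n-1}$ in powers of $1/i$, represent the mixed classes $\alpha^{n-1-k}\cdot\{\omega_0\}^{k}$ by closed positive currents so that Lemma \ref{max slope}$(1)$ gives a perturbation of the slopes that is $O(1/i)$ uniformly in $\mathcal F$, and absorb it into the positive gap $\mu_\alpha(E)-\sup_{\mathcal F}\mu_\alpha(\mathcal F)>0$ supplied by stability and Lemma \ref{max slope}$(2)$. The only cosmetic difference is that the paper takes $\langle T_{\min}^{n-1-k}\rangle\wedge\omega_0^{k}$ as the positive representative rather than a weak limit of K\"ahler wedges, and phrases the gap via the maximizing subsheaf $\mathcal F_0$ instead of the finiteness of the slope set.
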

\begin{proof}
Let $0\neq\mathcal{F}_0\subsetneq E$ be a subsheaf with maximal slope in Lemma \ref{max slope} $(2)$. Then there is a large $i_0>0$ which is independent of subsheaves of $E$ such that for any $i\ge i_0$ and $0\neq \mathcal{F}\subsetneq E$, the following inequality holds:
\begin{align*}
&\mu_{\alpha_i}(E)-\mu_{\alpha_i}(\mathcal{F})\\
&=\mu_{\alpha}(E)-\mu_{\alpha}(\mathcal{F})+\Sigma_{j=1}^{n-1}C_j(\frac{1}{i})^j\left(\frac{\int_Xc_1(E)\wedge\alpha^{n-1-j}\wedge\omega_0^j}{\rk E}-\frac{\int_Xc_1(\mathcal{F})\wedge\alpha^{n-1-j}\wedge\omega_0^j}{\rk \mathcal{F}} \right)\\
&\ge \mu_{\alpha}(E)-\mu_{\alpha}(\mathcal{F}_0)-\frac{1}{i}C\\
&>0.
\end{align*}
Here the inequality in the third line follows from Lemma \ref{max slope} since $\alpha^{n-1-j}\wedge\omega_0^j$ is represented by a positive current $\langle T_{\min}^{n-1-j}\rangle\wedge\omega_0^j$, where $T_{\min}$ is a closed positive $(1,1)$-current with minimal singularities in $\alpha$. And the last inequality comes from the $\alpha^{n-1}$-slope stability of $E$.
\end{proof}
We denote by $\underline{E}$ the underlying complex vector bundle of $E$.
By Lemma \ref{approximating stability} and the theorem of Uhlenbeck and Yau \cite{UY}, there is the $\omega_i$-HYM metric $h_i=h_0H_i$ on $E$, where $H_i\in \Herm^+(\underline{E}, h_0)$ is a positive definite $h_0$-hermitian smooth endomorphism of $\underline{E}$ with $\det H_i=1$.
\begin{lemm}\label{construction of HYM conns}
$(1)$ $\nabla_i:=H_i^{\frac{1}{2}}\circ \nabla_{h_i}\circ H_i^{-\frac{1}{2}}$ is an $\omega_i$-HYM connection of a complex hermitian vector bundle $(\underline{E},h_0)$.\\
\noindent $(2)$ There is a constant $C>0$ such that $\int_X|F_{\nabla_i}|_{\omega_i,h_0}^2\omega_i^n\le C$ holds for any $i$. Here $F_{\nabla}$ denotes the curvature tensor of a connection $\nabla$.
\end{lemm}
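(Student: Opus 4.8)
The plan is to obtain $(1)$ by a unitary gauge-change computation and $(2)$ from the Chern--Weil identity together with the convergence $\alpha_i\to\alpha$. For $(1)$, I would first note that since $H_i$ is $h_0$-Hermitian and positive definite, so is $H_i^{1/2}$, and $H_i^{1/2}$ is an isometry $(\underline E,h_i)\to(\underline E,h_0)$ with $h_i=h_0H_i$. Consequently the conjugated connection $\nabla_i=H_i^{1/2}\circ\nabla_{h_i}\circ H_i^{-1/2}$ is an $h_0$-connection, with curvature $F_{\nabla_i}=H_i^{1/2}\,F_{\nabla_{h_i}}\,H_i^{-1/2}$. I would then check the three conditions of Definition \ref{admissible HYM def}. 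Integrability: $H_i^{1/2}$ has bundle degree $0$, so the $(0,1)$-part of $\nabla_i$ equals $H_i^{1/2}\circ\delbar_E\circ H_i^{-1/2}$ with $\delbar_E$ the Dolbeault operator of $E$; composing and using $\delbar_E\circ\delbar_E=0$ gives $\delbar^{\nabla_i}\circ\delbar^{\nabla_i}=0$. The $\omega_i$-HYM equation: applying $\sqrt{-1}\Lambda_{\omega_i}$ to $F_{\nabla_i}=H_i^{1/2}F_{\nabla_{h_i}}H_i^{-1/2}$ and using that $h_i$ is an $\omega_i$-HYM metric, i.e. $\sqrt{-1}\Lambda_{\omega_i}F_{\nabla_{h_i}}=\lambda_i\Id$, gives $\sqrt{-1}\Lambda_{\omega_i}F_{\nabla_i}=\lambda_i\Id$. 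Finiteness of $\int_X|F_{\nabla_i}|_{\omega_i}^2\omega_i^n$ is automatic, as $\omega_i$ is a smooth K\"ahler metric on the compact manifold $X$.

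For $(2)$, the isometry property of $H_i^{1/2}$ gives the pointwise equality $|F_{\nabla_i}|_{\omega_i,h_0}=|F_{\nabla_{h_i}}|_{\omega_i,h_i}$, so it suffices to bound $\int_X|F_{\nabla_{h_i}}|_{\omega_i,h_i}^2\omega_i^n$ uniformly in $i$. Here I would invoke the Chern--Weil identity on $(X,\omega_i)$: because $F_{\nabla_{h_i}}$ is of type $(1,1)$, one has the pointwise equality
$$
|F_{\nabla_{h_i}}|_{\omega_i,h_i}^2\,\frac{\omega_i^n}{n!}=|\Lambda_{\omega_i}F_{\nabla_{h_i}}|_{\omega_i,h_i}^2\,\frac{\omega_i^n}{n!}+\Tr\big(F_{\nabla_{h_i}}\wedge F_{\nabla_{h_i}}\big)\wedge\frac{\omega_i^{n-2}}{(n-2)!}.
$$
Integrating over $X$: the first term on the right equals $\lambda_i^2\,\rk E\,\alpha_i^n/n!$ since $\sqrt{-1}\Lambda_{\omega_i}F_{\nabla_{h_i}}=\lambda_i\Id$; the second equals the cohomological pairing $\int_X(-4\pi^2c_1(E)^2+8\pi^2c_2(E))\wedge\alpha_i^{n-2}/(n-2)!$, because $\omega_i^{n-2}$ is closed and $[\Tr(F\wedge F)]$ is the corresponding characteristic class. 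To finish, taking the trace of the HYM equation and integrating yields the cohomological identity $\lambda_i\,\rk E\,\alpha_i^n/n!=2\pi\int_X c_1(E)\wedge\alpha_i^{n-1}/(n-1)!$, so by Definition \ref{stability def} $\lambda_i=2\pi\,n!\,\mu_{\alpha_i}(E)/\alpha_i^n$; since $\alpha_i=\alpha+(1/i)\omega_0\to\alpha$ and $\alpha$ is big (hence $\alpha^n>0$), the quantities $\lambda_i$, $\alpha_i^n$, $\int_X c_1(E)^2\wedge\alpha_i^{n-2}$ and $\int_X c_2(E)\wedge\alpha_i^{n-2}$ all converge as $i\to\infty$, and so are bounded independently of $i$. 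Combining gives $\int_X|F_{\nabla_i}|_{\omega_i,h_0}^2\omega_i^n\le C$ with $C$ independent of $i$.

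Both parts are essentially standard, and the only point that really deserves care is that the solution metrics $h_i$ (equivalently the endomorphisms $H_i$) are a priori \emph{not} uniformly controlled in $i$. The reason this causes no harm in $(2)$ is structural: once one has reduced to $|F_{\nabla_{h_i}}|_{\omega_i,h_i}$, the Chern--Weil identity rewrites the $L^2$-energy in terms of the HYM constant $\lambda_i$, which is topological, and the pairing $\int_X\Tr(F_{\nabla_{h_i}}\wedge F_{\nabla_{h_i}})\wedge\omega_i^{n-2}$, which is topological because $\omega_i^{n-2}$ is closed --- neither of these sees $h_i$. The only quantitative input needed is that $\alpha_i^n$ stays bounded away from $0$, which is guaranteed by the bigness of $\alpha$.
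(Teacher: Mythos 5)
Your proposal is correct and follows essentially the same route as the paper: for $(1)$ the paper likewise reduces everything to the fact that conjugation by the $h_0$-Hermitian square root $H_i^{1/2}$ turns the Chern connection of $h_i=h_0H_i$ into an $h_0$-connection (leaving integrability and the HYM equation as the immediate consequences you spell out), and for $(2)$ it invokes exactly the Chern--Weil identity $\int_X(|F_{\nabla_i}|^2-|\Lambda_{\omega_i}F_{\nabla_i}|^2)\omega_i^n=c_n(2c_2(E)-c_1(E)^2)\cdot\alpha_i^{n-2}$ together with the HYM condition, with boundedness following from $\alpha_i\to\alpha$ and $\alpha^n>0$ as you note. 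The only difference is that you supply more of the routine verifications (the isometry property of $H_i^{1/2}$ and the convergence of $\lambda_i$) that the paper leaves implicit.
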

\begin{proof}
(1) We only show that $\nabla_i$ is a $h_0$-connection.
Let $u$ and $v$ be a local section of $\underline{E}$. Then the following holds:
\begin{align*}
d\left(h_0(u,v)\right)
&=d\left(h_i(H_i^{-\frac{1}{2}}u, H_i^{-\frac{1}{2}}v)\right)\\
&=h_i\left(\nabla_{h_i}(H_i^{-\frac{1}{2}}u), H_i^{-\frac{1}{2}}v\right)+h_i\left(H_i^{-\frac{1}{2}}u,\nabla_{h_i}(H_i^{-\frac{1}{2}}v)\right)\\
&=h_0\left((H_i^{\frac{1}{2}}\circ\nabla_i\circ H_i^{-\frac{1}{2}})u, v \right)+h_0\left(u, (H_i^{\frac{1}{2}}\circ\nabla_i\circ H_i^{-\frac{1}{2}})v \right) \\
&=h_0(\nabla_iu, v)+h_0(u,\nabla_iv).
\end{align*}
 
 $(2)$ is an consequence of the well-known equation (e.g. \cite{Kob})
$$
(2c_2(E)-c_1(E)^2)\alpha_i^{n-2}=c_n\int_X(|F_{\nabla_i}|_{\omega_i,h_0}^2-|\Lambda_{\omega_i}F_{\nabla_i}|_{\omega_i,h_0}^2)\omega_i^n
$$
and the $\omega_i$-HYM condition of $\nabla_i$.
\end{proof}
Since $\omega_i$ in Assumption \ref{assumption2} smoothly converges to $T$ on $X\setminus D$ and $T$ is smooth K\"{a}hler on $X\setminus D$, we can take the Uhlenbeck limit (\cite{Nak88} and \cite{CW22}) of $\nabla_i$. That is, there are the following data:
\begin{corr}[\cite{CW22}, \cite{Nak88}]\label{Uhlenbeck compactness}
The following data exist:
\begin{itemize}
\item An analytic subset $\Sigma\subset X\setminus D$  with $\codim \Sigma \ge 2$. Denote by $Z=D\cup\Sigma$.
\item  A $T$-admissible HYM connection $\nabla_{\infty}$ on $(\underline{E}|_{X\setminus Z},h_0)$ such that
\item $\nabla_i\to \nabla_{\infty}$ in $C^{\infty}_{\loc}(X\setminus Z)$ up to $h_0$-unitary transformation.
\item  The holomorphic vector bundle $E_{\infty}:=(\underline{E}|_{X\setminus Z}, \delbar^{\nabla_{\infty}})$ given by $\nabla_{\infty}$. 
\end{itemize}
\end{corr}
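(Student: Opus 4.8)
The plan is to deduce this statement from the Uhlenbeck-type compactness theorems of Nakajima \cite{Nak88} and Chen--Wentworth \cite{CW22}, applied to the sequence $\nabla_i$ of $\omega_i$-HYM connections on $(\underline{E},h_0)$ constructed in Lemma \ref{construction of HYM conns}, but carried out over the \emph{noncompact} K\"{a}hler manifold $X\setminus D$ rather than over a compact K\"{a}hler manifold. The one point requiring care is that the background metric $\omega_i$ varies with $i$; I handle this by exhausting $X\setminus D$ by relatively compact open sets $U\Subset X\setminus D$ and invoking Assumption \ref{assumption2}, which gives $\omega_i\to T$ in $C^\infty_{\loc}(X\setminus D)$ with $T$ smooth K\"{a}hler there. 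Consequently, on each such $U$ the metrics $\omega_i$ are uniformly equivalent to $T|_U$ and uniformly bounded in every $C^k(U)$-norm, so the $\omega_i$-HYM equations form a uniformly elliptic family on $U$ with smoothly convergent coefficients. Moreover the HYM constants $\lambda_i$ of $\nabla_i$ are topological: taking traces and integrating in the $\omega_i$-HYM equation shows $\lambda_i$ is determined by $c_1(E)\cdot\{\omega_i\}^{n-1}$ and $\{\omega_i\}^n$, and since $\{\omega_i\}=\alpha+\tfrac1i\omega_0\to\alpha$ in cohomology and $\alpha^n>0$ (as $\alpha$ is big), we have $\lambda_i\to\lambda$ for a finite constant $\lambda$.

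The two a priori inputs are the uniform $L^2$ curvature bound $\int_X|F_{\nabla_i}|^2_{\omega_i,h_0}\omega_i^n\le C$ of Lemma \ref{construction of HYM conns}$(2)$ and the uniform Sobolev inequality of Lemma \ref{sob ineq} (equivalently, on compact subsets of $X\setminus D$ a uniform Sobolev constant coming from the $C^k$-control of $\omega_i$). Feeding these into Uhlenbeck's $\varepsilon$-regularity theorem, one obtains the bubbling set
\[
\Sigma:=\Bigl\{x\in X\setminus D\ \Bigm|\ \liminf_{i\to\infty}\int_{B_r(x)}|F_{\nabla_i}|^2_{\omega_i,h_0}\,\omega_i^n\ge\varepsilon_0\ \text{for all}\ r>0\Bigr\},
\]
a closed subset of $X\setminus D$; and on $X\setminus(D\cup\Sigma)$ there exist local $h_0$-unitary gauge transformations, which patch to a global one on $X\setminus Z$ with $Z:=D\cup\Sigma$, so that a subsequence of $\nabla_i$ converges in $C^\infty_{\loc}(X\setminus Z)$ to a smooth $h_0$-connection $\nabla_\infty$ on $(\underline{E}|_{X\setminus Z},h_0)$. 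Since HYM connections are Yang--Mills and the limit is stationary, Price's monotonicity formula together with the $L^2$ bound shows $\Sigma$ has locally finite $(2n-4)$-dimensional Hausdorff measure; by the analyticity of Yang--Mills blow-up loci (Tian; see \cite{CW22}), $\Sigma$ is then a complex analytic subset of $X\setminus D$ with $\codim\Sigma\ge2$.

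It remains to check that $\nabla_\infty$ is a $T$-admissible HYM connection and to name $E_\infty$. Integrability $\delbar^{\nabla_\infty}\circ\delbar^{\nabla_\infty}=0$ and the equation $\sqrt{-1}\Lambda_T F_{\nabla_\infty}=\lambda\,\Id$ on $X\setminus Z$ pass to the limit from $\delbar^{\nabla_i}\circ\delbar^{\nabla_i}=0$ and $\sqrt{-1}\Lambda_{\omega_i}F_{\nabla_i}=\lambda_i\,\Id$, using the $C^\infty_{\loc}$ convergence of $\nabla_i$, the $C^\infty_{\loc}$ convergence $\omega_i\to T$, and $\lambda_i\to\lambda$. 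The $T$-admissible condition follows from lower semicontinuity of the $L^2$-energy under smooth local convergence (Fatou):
\[
\int_{X\setminus Z}|F_{\nabla_\infty}|^2_T\,T^n\le\liminf_{i\to\infty}\int_X|F_{\nabla_i}|^2_{\omega_i,h_0}\,\omega_i^n\le C<\infty.
\]
Finally, $E_\infty:=(\underline{E}|_{X\setminus Z},\delbar^{\nabla_\infty})$ is a holomorphic vector bundle by integrability, and its underlying complex hermitian bundle is $(\underline{E}|_{X\setminus Z},h_0)$ by construction, since only $h_0$-unitary gauge transformations were applied; this matches \cite{CW22} Corollary 48.

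I expect the main obstacle to be precisely the passage from the standard fixed-metric Uhlenbeck theory to the varying-metric situation on the noncompact space $X\setminus D$, and in particular confirming that the bubbling set $\Sigma$ is a genuine complex-analytic subvariety rather than merely a closed set of finite Hausdorff measure; this rests on the deepest ingredient (the analyticity of Yang--Mills blow-up loci), which I invoke from \cite{CW22} and \cite{Nak88}. A secondary technical point is that the local Coulomb gauges patch to a single gauge equivalence on $X\setminus Z$, which is routine once $\Sigma$ has been removed and the uniform elliptic estimates on compact subsets of $X\setminus D$ are in place.
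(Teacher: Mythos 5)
Your proposal is correct and follows essentially the same route as the paper, which simply invokes the Uhlenbeck compactness results of \cite{Nak88} and \cite{CW22} using the two inputs you identify: the uniform $L^2$ curvature bound of Lemma \ref{construction of HYM conns}$(2)$ and the smooth local convergence $\omega_i\to T$ on $X\setminus D$ with $T$ K\"ahler there. Your writeup merely spells out the internals of those cited theorems (the bubbling set, Price monotonicity, analyticity of the blow-up locus, gauge patching), which the paper leaves to the references.
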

From now on, we denote as $E_i:=(\underline{E}, \delbar^{\nabla_i})$ the holomorphic vector bundle given by $\nabla_i$, and $Z:=D\cup \Sigma$ a closed subset of $X$. By \cite{Bando91}, we know that
\begin{lemm}[\cite{Bando91}]
The holomorphic vector bundle $E_{\infty}$ over $X\setminus Z$ extends to the reflexive sheaf, also denoted by $E_{\infty}$, over $X\setminus D$.
\end{lemm}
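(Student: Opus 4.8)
The plan is to reduce to a local statement along $\Sigma$ and then invoke the removable-singularity theorem of Bando \cite{Bando91} (see also Bando--Siu \cite{BS}). First, note that the claim is local near $\Sigma$: by Corollary \ref{Uhlenbeck compactness} the set $\Sigma\subset X\setminus D$ is analytic with $\codim\Sigma\ge 2$, and $E_{\infty}$ is a holomorphic vector bundle -- in particular a reflexive sheaf -- on $X\setminus Z=(X\setminus D)\setminus\Sigma$; so it suffices to produce, for each $p\in\Sigma$, a coherent reflexive sheaf on a small ball $B\subset X\setminus D$ centred at $p$ that restricts to $E_{\infty}$ on $B\setminus\Sigma$, and then to glue these. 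To set up Bando's theorem I would record that, by Corollary \ref{Uhlenbeck compactness}, $\nabla_{\infty}$ is an integrable $h_0$-connection on $\underline{E}|_{X\setminus Z}$ satisfying $\sqrt{-1}\Lambda_T F_{\nabla_{\infty}}=\lambda\Id$ and $\int_{X\setminus Z}|F_{\nabla_{\infty}}|_T^2\,T^n<\infty$; hence, with respect to the holomorphic structure $\delbar^{\nabla_{\infty}}$ on $E_{\infty}$, the metric $h_0$ has Chern connection $\nabla_{\infty}$, so $h_0$ is a $T$-admissible HYM metric on $E_{\infty}$ over $X\setminus Z$. Since $T$ is a smooth K\"{a}hler metric on a neighbourhood of $\Sigma$ in $X\setminus D$ (Assumption \ref{assumption2}, Lemma \ref{regularity of omega}), on such a ball $B$ the pair $(E_{\infty},h_0)|_{B\setminus\Sigma}$ is a holomorphic bundle carrying a Hermitian--Einstein metric for the smooth K\"{a}hler metric $T|_B$ with locally finite $\int|F|^2$.

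These are exactly the hypotheses of \cite{Bando91}, so that result applies: the pushforward $j_*(E_{\infty}|_{B\setminus\Sigma})$ under $j:B\setminus\Sigma\hookrightarrow B$ is a coherent reflexive sheaf on $B$, and $h_0$ extends to an admissible HYM metric on it. The content of this input -- the one nontrivial point -- is that the HYM equation together with the $L^2$ bound on the curvature yields a mean-value/elliptic estimate forcing locally $L^2$ (equivalently, locally bounded) $\delbar^{\nabla_{\infty}}$-holomorphic sections of $E_{\infty}$ over $B\setminus\Sigma$ to extend holomorphically across $\Sigma$, and that the sheaf of such extended sections is locally finitely generated; the condition $\codim\Sigma\ge 2$ is used both in the extension of sections and in the reflexivity of the resulting sheaf.

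To finish, I would glue: a reflexive sheaf on a smooth complex manifold is recovered from its restriction to the complement of an analytic set of codimension $\ge 2$ as the pushforward of that restriction, so the local sheaves $j_*(E_{\infty}|_{B\setminus\Sigma})$ agree on overlaps and patch to a single coherent reflexive sheaf on $X\setminus D$ restricting to $E_{\infty}$ on $X\setminus Z$; we keep the notation $E_{\infty}$ for it. The main obstacle is precisely the analytic heart of Bando's theorem recalled above: without the HYM equation and the $T$-admissibility (the $L^2$ curvature bound), a holomorphic bundle on the complement of a codimension-$2$ analytic set need not extend to a coherent sheaf at all, so it is exactly these two properties of $\nabla_{\infty}$ that are being used. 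The remaining points -- the localization, the verification that $T$ is smooth K\"{a}hler near $\Sigma$, and the gluing -- are routine.
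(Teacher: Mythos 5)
Your proposal is correct and matches the paper, which proves this lemma simply by citing Bando's removable-singularity theorem \cite{Bando91}: you have correctly identified that the hypotheses (holomorphic bundle off an analytic set of codimension $\ge 2$, Hermitian metric with $L^2$ curvature and bounded $\Lambda F$ via the HYM equation, smooth K\"ahler metric $T$ near $\Sigma$ inside $X\setminus D$) are exactly what Corollary \ref{Uhlenbeck compactness} supplies. The localization and gluing via reflexivity are routine, as you say, and the paper leaves all of this implicit.
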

\noindent The goal is to prove that $E_{\infty}$ is holomorphically isomorphic to $E|_{X\setminus D}$.
To construct the isomorphism, let us consider the following morphism:
\begin{lemm}\label{app isoms}
The section $\Phi_i:=H_i^{-\frac{1}{2}}\in H^0(X, \Hom(E_i, E))$ gives a holomorphic isomorphism from $E_i$ to $E$.
Here the holomorphic structure of $\Hom(E_i,E)$ is given by the $\omega_i$-HYM connection $\nabla_i$ on $E_i$ in Lemma \ref{construction of HYM conns} and the Chern connection $\nabla_{h_0}$ of $(E,h_0)$.
\end{lemm}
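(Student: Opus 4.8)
The plan is to observe that $\nabla_i$ and the Chern connection $\nabla_{h_i}$ of $h_i = h_0 H_i$ differ by the complex gauge transformation $g := H_i^{1/2}$, and that this forces $\Phi_i = g^{-1} = H_i^{-1/2}$ to intertwine the two holomorphic structures $\delbar^{\nabla_i}$ and $\delbar^{\nabla_{h_0}}$. Once this is set up, the statement reduces to a one-line formal identity, and invertibility is immediate from positive-definiteness of $H_i$.

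Concretely, I would proceed as follows. First, note that $h_i = h_0 H_i$ is a Hermitian metric on the \emph{same} holomorphic bundle $E$, so the $(0,1)$-part of its Chern connection is the original holomorphic structure of $E$, i.e. $\delbar^{\nabla_{h_i}} = \delbar^{\nabla_{h_0}}$. Next, by Lemma \ref{construction of HYM conns} we have $\nabla_i = g \circ \nabla_{h_i} \circ g^{-1}$ with $g = H_i^{1/2}$; since $g$ has form-degree $0$ it does not mix $(1,0)$ and $(0,1)$ parts, so taking $(0,1)$-components gives
\[
\delbar^{\nabla_i} \;=\; g \circ \delbar^{\nabla_{h_i}} \circ g^{-1} \;=\; H_i^{1/2} \circ \delbar^{\nabla_{h_0}} \circ H_i^{-1/2}.
\]
Now recall that the induced $\delbar$-operator on $\Hom(E_i, E)$ sends a section $\phi$ to $s \mapsto \delbar^{\nabla_{h_0}}(\phi(s)) - \phi(\delbar^{\nabla_i} s)$, so $\phi$ is holomorphic exactly when $\delbar^{\nabla_{h_0}} \circ \phi = \phi \circ \delbar^{\nabla_i}$. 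Plugging in $\phi = \Phi_i = H_i^{-1/2}$ and using the displayed identity, $\Phi_i \circ \delbar^{\nabla_i} = H_i^{-1/2} \circ H_i^{1/2} \circ \delbar^{\nabla_{h_0}} \circ H_i^{-1/2} = \delbar^{\nabla_{h_0}} \circ \Phi_i$, which is precisely the required equation; hence $\Phi_i \in H^0(X, \Hom(E_i, E))$. Finally, since $H_i \in \Herm^+(\underline{E}, h_0)$ is globally defined, smooth, and positive definite, $H_i^{-1/2}$ is likewise a globally defined smooth endomorphism (functional calculus) and is invertible on every fibre with inverse $H_i^{1/2}$, so $\Phi_i$ is a holomorphic isomorphism $E_i \xrightarrow{\ \sim\ } E$.

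There is no genuine obstacle in this lemma: the content is carried entirely by the gauge relation already recorded in Lemma \ref{construction of HYM conns}. The only points requiring minor care are bookkeeping ones --- checking that conjugation by $g$ transports the $\delbar$-operator as $\delbar^{\nabla_i} = g \circ \delbar^{\nabla_{h_i}} \circ g^{-1}$ (which is where form-degree $0$ of $g$ enters) and that $\delbar^{\nabla_{h_i}}$ is the original holomorphic structure of $E$ rather than a new one. Equivalently, the whole argument can be phrased in one sentence: conjugation by $H_i^{-1/2}$ is a bundle isomorphism intertwining $\delbar^{\nabla_i}$ with $\delbar^{\nabla_{h_0}}$, which is exactly the assertion that $\Phi_i$ is a holomorphic isomorphism from $E_i$ to $E$.
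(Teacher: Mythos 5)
Your proof is correct and is exactly the paper's argument: the paper disposes of this lemma in one line by citing the definition $\nabla_i=H_i^{1/2}\circ\nabla_{h_i}\circ H_i^{-1/2}$ together with $\delbar^{\nabla_{h_i}}=\delbar^{E}$, which are precisely the two facts you use. You have simply written out the intertwining computation and the invertibility of $H_i^{-1/2}$ explicitly, which the paper leaves implicit.
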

\begin{proof}
The proof is the direct consequence of the definition of $\nabla_i$ and $\delbar^{\nabla_{h_i}}=\delbar^{E}$.
\end{proof}
In the next subsection, we prove that $\Phi_i$ converges to a nontrivial holomorphic morphism $\Phi_{\infty}$ from $E_{\infty}$ to $E|_{X\setminus Z}$.

\subsubsection{nontrivial limit of $\Phi_i$}
The assumptions $h_0=\pi^*h_F$ and $T=\pi^*\omega\ge C^{-1}\pi^*\omega_Y$ are essential in the following proposition.
\begin{prop}\label{L-infty estimate}
We consider the setting in Assumption \ref{assumption2}.
Then the following holds:
\begin{itemize}
\item By replacing $\Phi_i$ by $\Phi_i/\int_X|\Phi_i|_{h_0}^2\omega_i^n$, we have $\|\Phi_{i}\|_{L^{\infty}(X)}<C$.
\item By replacing $\Phi_i^{-1}$ by $\Psi_i:=\Phi_i^{-1}/\int_X|\Phi_i^{-1}|_{h_0}^2\omega_i^n$, we have $\|\Psi_i\|_{L^{\infty}(X)}<C$.
\end{itemize}
\end{prop}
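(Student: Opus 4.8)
The plan is to reduce both assertions to a single uniform mean-value inequality applied to $|\Phi_i|^2_{h_0}$ and $|\Phi_i^{-1}|^2_{h_0}$. First I would record the relevant Bochner inequalities. By Lemma \ref{app isoms} and its mirror, $\Phi_i=H_i^{-1/2}$ is a global holomorphic section of $\Hom(E_i,E)$ and $\Phi_i^{-1}=H_i^{1/2}$ is a global holomorphic section of $\Hom(E,E_i)$, where $E_i$ carries $\nabla_i$, $E$ carries $\nabla_{h_0}$, and both are equipped with $h_0$. For any holomorphic section $s$ of a Hermitian holomorphic bundle $(\mathcal V,H)$ over $(X,\omega_i)$ the Bochner identity gives $\Delta_{\omega_i}|s|^2_H\ge-\langle\sqrt{-1}\Lambda_{\omega_i}F_H\cdot s,s\rangle_H\ge-\|\sqrt{-1}\Lambda_{\omega_i}F_H\|_{\mathrm{op}}\,|s|^2_H$; on $\Hom(E_i,E)$ the mean curvature acts by $\phi\mapsto(\sqrt{-1}\Lambda_{\omega_i}F_{h_0})\circ\phi-\lambda_i\,\phi$ because $\sqrt{-1}\Lambda_{\omega_i}F_{\nabla_i}=\lambda_i\Id$ by Lemma \ref{construction of HYM conns}, and analogously on $\Hom(E,E_i)$. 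Hence, writing $a_i:=\|\sqrt{-1}\Lambda_{\omega_i}F_{h_0}\|_{\mathrm{op}}+|\lambda_i|$,
$$
\Delta_{\omega_i}|\Phi_i|^2_{h_0}\ \ge\ -a_i\,|\Phi_i|^2_{h_0},\qquad \Delta_{\omega_i}|\Phi_i^{-1}|^2_{h_0}\ \ge\ -a_i\,|\Phi_i^{-1}|^2_{h_0}.
$$

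Next I would run the De Giorgi--Nash--Moser iteration on $(X,\omega_i)$ for nonnegative $\omega_i$-subsolutions of $\Delta_{\omega_i}v\ge-a_iv$. The only inputs are the uniform Sobolev inequality of Lemma \ref{sob ineq}; the two-sided volume bound $0<c\le V_{\omega_i}=\alpha_i^n\le C$ (valid since $\alpha$ is big, so $\langle\alpha^n\rangle=\alpha^n>0$, and $\alpha_i=\alpha+\tfrac1i\{\omega_X\}\to\alpha$); and a uniform bound $\|a_i\|_{L^p(\omega_i^n)}\le C$ for some fixed $p>n$. Granting these one obtains $\sup_X v\le C\,V_{\omega_i}^{-1}\int_X v\,\omega_i^n$ with $C$ independent of $i$. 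The constants $\lambda_i$ are topological, determined by $c_1(E)\cdot\alpha_i^{n-1}$, $\rk E$ and $\alpha_i^n$, hence convergent, so $\sup_i|\lambda_i|<\infty$. Moreover the \emph{unnormalized} function $|\Phi_i|^2_{h_0}=\Tr(H_i^{-1})$ is $\ge\rk E$ pointwise (because $\det H_i=1$), so $\int_X|\Phi_i|^2_{h_0}\omega_i^n\ge\rk E\cdot V_{\omega_i}\ge c>0$; dividing the mean-value inequality through by this integral as in the statement gives $\sup_X\!\big(|\Phi_i|^2_{h_0}/\!\int_X|\Phi_i|^2_{h_0}\omega_i^n\big)\le C\,V_{\omega_i}^{-1}\le C'$, and identically with $\Phi_i^{-1}$ in place of $\Phi_i$ after its normalization — these are precisely the two claimed bounds.

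The heart of the matter, and the step I expect to be the main obstacle, is the uniform bound $\|\sqrt{-1}\Lambda_{\omega_i}F_{h_0}\|_{L^p(\omega_i^n)}\le C$ for some $p>n$. This is exactly where the hypotheses $h_0=\pi^*h_F$ and $T=\pi^*\omega\ge C^{-1}\pi^*\omega_Y$ (recall $\omega$ is a K\"ahler current on $Y$, \cite{CCHTST25}) enter: since $F_{h_0}=\pi^*F_{h_F}$ with $F_{h_F}$ a \emph{fixed} smooth $\End(F)$-valued $(1,1)$-form on $Y$ of bounded $\omega_Y$-norm, one has $-C\,\pi^*\omega_Y\otimes\Id\le\sqrt{-1}\pi^*F_{h_F}\le C\,\pi^*\omega_Y\otimes\Id$ on $X$, hence $\|\sqrt{-1}\Lambda_{\omega_i}F_{h_0}\|_{\mathrm{op}}\le C\,\Lambda_{\omega_i}(\pi^*\omega_Y)$, and the problem reduces to the uniform integrability $\int_X\big(\Lambda_{\omega_i}(\pi^*\omega_Y)\big)^p\,\omega_i^n\le C$ for some $p>n$. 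The difficulty is genuine: since $\pi^*\{\omega\}$ is not K\"ahler, $\omega_i$ must degenerate along $D=E_{nK}(\pi^*\{\omega\})$, and the only pointwise lower bound at hand, $\omega_i\ge c\,|s_D|^{2m}\omega_X$ from the proof of Lemma \ref{positive current nef big}, yields merely $\Lambda_{\omega_i}(\pi^*\omega_Y)\lesssim|s_D|^{-2m}$, which lies in no $L^p$.

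To overcome this I would exploit that $\pi^*\omega$ \emph{dominates} $\pi^*\omega_Y$, so $\Lambda_{\pi^*\omega}(\pi^*\omega_Y)\le C$ on $X\setminus D$, together with the precise degeneration $(\pi^*\omega)^n=e^f|s_D|^{q}\omega_X^n$ with $q\ge1$ and the integrability $\int_X|\log(\omega_i^n/\omega_X^n)|^p\omega_i^n\le C$ for $p>n$ from Assumption \ref{assumption2} (Lemma \ref{regularity of omega}), so that the vanishing of the measures $\omega_i^n$ near $D$ absorbs the blow-up of $\Lambda_{\omega_i}(\pi^*\omega_Y)$; the cleanest route is to upgrade the lower bound to a uniform Schwarz-lemma estimate $\omega_i\ge c\,\pi^*\omega_Y$, which would even make $a_i$ bounded in $L^\infty$ and render the Moser iteration elementary. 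This Laplacian-type estimate is the only ingredient of the whole argument tied to the Monge--Amp\`ere geometry of $T=\pi^*\omega$, which is exactly why, for a general positive current $T$ as in Assumption \ref{assumption3}, it has to be imposed as a hypothesis rather than proved.
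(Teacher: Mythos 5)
Your proposal is correct in outline and runs on the same engine as the paper's proof: a Weitzenb\"ock inequality for the holomorphic sections $\Phi_i=H_i^{-1/2}$ and $\Phi_i^{-1}$, the uniform Sobolev inequality of Lemma \ref{sob ineq}, and a Moser-type iteration, with the crux in both cases being uniform control of $\Lambda_{\omega_i}F_{h_0}$ coming from $h_0=\pi^*h_F$ and the domination of $\pi^*\omega_Y$. The one genuine divergence is the choice of test function. You iterate on $v=|\Phi_i|^2$ itself, so your inequality is $\Delta_{\omega_i}v\ge -a_iv$ with a zeroth-order coefficient, and you must (i) establish $\sup_i\|a_i\|_{L^p(\omega_i^n)}<\infty$ for some $p>n$ and (ii) upgrade the iteration to an $L^1\to L^\infty$ bound, since the normalization only controls $\int_X|\Phi_i|^2\omega_i^n$. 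The paper instead iterates on $u_i=\log(|\Phi_i|^2+1)$: the prefactor $|\Phi_i|^2/(|\Phi_i|^2+1)\le 1$ turns the zeroth-order term into a constant inhomogeneity $\Delta_{\omega_i}u_i\ge -N$, and the elementary bound $\bigl(\log(|\Phi_i|^2+1)\bigr)^2\le|\Phi_i|^2$ converts the $L^2$ norm of $u_i$ (which is what the iteration produces) directly into the $L^1$ norm of $|\Phi_i|^2$, which is normalized to $1$. This buys a cleaner closing step (no $L^1$-refinement needed, though your step (ii) is standard via $\|v\|_{L^2}^2\le\sup v\cdot\|v\|_{L^1}$) and, more importantly, requires only an $L^\infty$ bound on $\Lambda_{\omega_i}F_{h_0}$ rather than a genuinely quantitative $L^p$ coefficient theory. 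On point (i), be aware that the paper disposes of it in one line, passing from $|\Lambda_{\omega_i}F_{h_0}|$ to $|\Lambda_{\omega_Y}F_{h_F}|$, which is precisely the uniform Schwarz-type estimate $\omega_i\gtrsim\pi^*\omega_Y$ that you identify as the main obstacle and leave conditional; so your diagnosis of where the Monge--Amp\`ere geometry enters is accurate, and your route would close once that single estimate is in hand, exactly as the paper's does.
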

\begin{proof}
We prove the uniform $L^{\infty}(X)$-estimate on $\Phi_{i}$.
Let $\omega_i$ be a K\"{a}hler metric in $\alpha+(1/i)\omega_0$ such that $\omega_i\to T$ in ${C}^{\infty}_{\loc}(X\setminus D)$ and weakly on $X$ as in Assumption \ref{assumption2}. We give a hermitian metric $h_0^*\otimes h_0$ on the underlying smooth bundle of ${\rm{Hom}}(E_i,E)$ and $(h_0^*\otimes h_0)$-connection $\nabla_i^*\otimes\nabla_{h_0}$ which is denoted by $\nabla_i$.
Let $x\in X$ be any point. If we denote by $e_1,\ldots,e_n$ a local frame of $T^{1,0}_X$, then the following calculation works at $x$:
\begin{align}\label{L-infty estimate laplacian}
&\Box^{\delbar}_{\omega_i}\log(|\Phi_i|^2+1)\notag\\
&=i(\overline{e_i})\partial_j\left((|\Phi_i|^2+1)^{-1}\langle\Phi_i,\partial^{\nabla_i}\Phi_i\rangle\right)\notag\\
&=-(|\Phi_i|^2+1)^{-2}\langle\partial^{\nabla_i}_j\Phi_i,\Phi_i\rangle\langle\Phi_i,\partial^{\nabla_i}_j\Phi_i\rangle+(|\Phi_i|^2+1)^{-1}\left(|\partial^{\nabla_i}_j\Phi_i|^2+\langle\Phi_i,\delbar^{\nabla_i}_j\partial^{\nabla_i}_j\Phi_i\rangle\right)\notag\\
&\ge (|\Phi_i|^2+1)^{-1}\langle \Phi_i, \Lambda_{\omega_i}F_{h_0}\circ \Phi_i-\Phi_i\circ \Lambda_{\omega_i}F_{\nabla_i}\rangle\notag\\
&\ge (|\Phi_i|^2+1)^{-1}|\Phi_i|^2\left(-|\Lambda_{\omega_i}F_{h_0}|_{h_0^*\otimes h_0}-|\lambda_i|\rk E\right)\notag\\
&\ge-|\Lambda_{\omega_Y}F_{h_F}|_{h_0^*\otimes h_0}-|\lambda_{\infty}|\rk E\notag\\
&\ge -N_1.
\end{align}
In the third inequality, we used $\lambda_i\to\lambda_{\infty}$. Since the LHS is smooth on $X$, we obtain 
\begin{equation}\label{lap ineq}
\Delta_{\omega_i}\log(|\Phi_i|^2_{h_0}+1)\ge -N
\end{equation}
on $X$.
Here we denote $\Phi_i/\int_X|\Phi_i|^2\omega_i^n$ by the same notation $\Phi_i$. We set $u_i=\log(|\Phi_i|^2+1)$. Then we show that there exists a constant $C>0$ such that $\sup_Xu_i\le C$ holds for any $i$. Although the following proof is the same with \cite{GPSS23} Lemma 6.3, we include the proof for completeness. We omit to write $\omega_i^n$ in the integration of $u_i$. We replace $u_i$ by $u_i+C^2$. Let $m>0$ be a positive number. By (\ref{lap ineq}), we have
$$
C\int_Xu_i^m\ge \int_Xu_i^m\cdot(-\Delta_{\omega_i}u_i)=m\int_X\langle u_i^{m-1}\nabla u_i,\nabla u_i\rangle_{\omega_i}=m\int_Xu_i^{m-1}|\nabla u_i|^2_{\omega_i}.
$$
Hence, by $\frac{1}{2m}\le \frac{2m}{(m+1)^2}$, we obtain
\begin{equation}\label{L-infty-eq1}
\frac{1}{m}\int_X|\nabla(u_i^{\frac{m+1}{2}})|^2_{\omega_i}\le {C}\int_Xu_i^m.
\end{equation}
By Theorem \ref{sob ineq}, we have
\begin{align*}
\left(\frac{C}{V_{\omega_i}}\int_X \big|\nabla(u_i^{\frac{m+1}{2}})\big|^2_{\omega_i}\right)^{1/2}
&\ge \left(\frac{1}{V_{\omega_i}}\int_X \big|u_i^{\frac{m+1}{2}}-\overline{u_i^{\frac{m+1}{2}}}\big|^{2q} \right)^{1/(2q)}\\
&\ge \left(\frac{1}{V_{\omega_i}}\int_X u_i^{(m+1)q}\right)^{1/(2q)}
 -\left(\frac{1}{V_{\omega_i}}\int_X u_i^{m+1}\right)^{1/2}.
\end{align*}
and thus,
\begin{equation}\label{L-infty-eq2}
\left(\frac{1}{V_{\omega_i}}\int_X u_i^{(m+1)q}\right)^{1/q}
\le \frac{C}{V_{\omega_i}}\int_X \big|\nabla(u_i^{\frac{m+1}{2}})\big|^2_{\omega_i}
 +\frac{1}{V_{\omega_i}}\int_X u_i^{m+1}.
\end{equation}
By (\ref{L-infty-eq1}) and (\ref{L-infty-eq2}), we get
\begin{equation}\label{L-infty-eq3}
\left(\frac{1}{V_{\omega_i}}\int_Xu_i^{(m+1)q}\right)^{1/q}\le \frac{1}{V_{\omega_i}}\left(C^2m\int_Xu_i^m+\int_Xu_i^{m+1}\right).
\end{equation}
Since $u_i\ge C^2$, the following calculation works. Here $N>\frac{q}{q-1}$ and $N^*=\frac{N}{N-1}$.
\begin{align}\label{L-infty-eq4}
\frac{C^2}{V_{\omega_i}}\int_Xu_i^m
&\le \frac{C^2}{V_{\omega_i}}\int_X\frac{u_i^{m+1}}{C^2}\notag\\
&\le \left(\frac{1}{V_{\omega_i}}\int_Xu_i^{(m+1)N^*}\right)^{1/N^*}\notag\\
&\le \left(\frac{1}{V_{\omega_i}}\int_Xu_i^{(m+1)\frac{q}{(q-1)(N-1)}}\cdot u_i^{(m+1)\frac{N(q-1)-q}{(q-1)(N-1)}} \right)^{1/N^*}\notag\\
&\le \left(\frac{1}{V_{\omega_i}}\int_Xu_i^{(m+1)q}\right)^{\frac{1}{(q-1)N}}\left(\frac{1}{V_{\omega_i}}\int_Xu_i^{m+1}\right)^{\frac{N(q-1)-q}{(q-1)N}}\notag\\
&\le \frac{1}{2m}\|u_i^{m+1}\|_{L^q,\omega_i}+(2m)^{\frac{q}{(q-1)N-q}}\|u_i^{m+1}\|_{L^1,\omega_i}.
\end{align}
In the last inequality, we use the Young's inequality. By (\ref{L-infty-eq3}) and (\ref{L-infty-eq4}), we get
\begin{equation}\label{L-infty eq5}
\left(\frac{1}{V_{\omega_i}}\int_Xu_i^{(m+1)q}\right)^{1/q}\le \frac{C}{V_{\omega_i}}((2m)^{\alpha}+1)\int_Xu_i^{m+1}
\end{equation}
where $\alpha=\frac{(q-1)N}{(q-1)N-q}$. Hence we obtain 
$$
\left(\frac{1}{V_{\omega_i}}\int_Xu_i^{(m+1)q}\right)^{\frac{1}{q(m+1)}}
\le C^{\frac{1}{m+1}}(2m+1)^{\frac{\alpha}{m+1}}\left(\frac{1}{V_{\omega_i}}\int_Xu_i^{m+1}\right)^{\frac{1}{m+1}}.
$$
If we set $m=q^j$ for $j\in \mathbb{Z}_{\ge0}$ and repeatedly use the above inequality, we obtain
\begin{equation}\label{L-infty-eq6}
\left(\frac{1}{V_{\omega_i}}\int_Xu_i^{(q^j+1)q}\right)\le C^{\Sigma_{j=0}^{\infty}\frac{1}{q^j+1}}\prod_{j=0}^{\infty}(q^j+1)^{\frac{\alpha}{q^j+1}}\left(\frac{1}{V_{\omega_i}}\int_Xu_i^2\right)^{\frac{1}{2}}
\end{equation}
for any $j$.
Therefore, by $j\to \infty$, we obtain 
$$
\sup_Xu_i\le C\left(\frac{1}{V_{\omega_i}}\int_Xu_i^2\right)^{\frac{1}{2}}.
$$
We now recall that $u_i=\log(|\Phi_i|^2+1)+C^2$, $\left(\log(|\Phi_i|^2+1)\right)^2\le |\Phi_i|^2$ and $\int_X|\Phi_i|^2\omega_i^n=1$. Thus we get
$$
\sup_X\log(|\Phi_i|^2+1)\le C
$$
and it shows the result.
We can prove the second assertion by replacing $\nabla_i:=\nabla_i^*\otimes\nabla_{h_0}$ by $\widetilde{\nabla_i}:=\nabla_{h_0}^*\otimes\nabla_i$.
\end{proof}
\noindent From now on, we denote the normalized morphism $\Phi_i/(\int_X|\Phi_i|_{h_0}^2\omega_i^n)$ by $\Phi_i$ and\\ $\Phi_i^{-1}/\int_X|\Phi_i^{-1}|_{h_0}^2\omega_i^n$ by $\Psi_i$. Furthermore,
\begin{assu}\label{assumption4}
We work under Assumption \ref{assumption3} in the rest of this paper by assuming that Proposition \ref{L-infty estimate} holds for a closed positive $(1,1)$-current $T$ in a nef and big class $\alpha$ in Assumption \ref{assumption3}.
\end{assu}
Then we can derive the existence of a nontrivial limit of $\Phi_i$ and $\Psi_i$.
\begin{prop}\label{nontrivial limit}
$(1)$ There exists a subsequence $(\Phi_{i_j})_j$ of $(\Phi_i)_i$ and a nonzero holomorphic global section $\Phi_{\infty}\in H^0(X\setminus Z, \Hom(E_{\infty}, E))$ such that $\Phi_{i_j}\to \Phi_{\infty}$ in $j\to \infty$  in $L^2_{\loc}(X\setminus Z)$.\\
$(2)$ There exists a subsequence $(\Psi_{i_j})_j$ of $(\Psi_i)_i$ and a nonzero holomorphic global section $\Psi_{\infty}\in H^0(X\setminus Z, \Hom(E, E_{\infty}))$ such that $\Psi_{i_j}\to \Psi_{\infty}$ in $j\to \infty$  in $L^2_{\loc}(X\setminus Z)$.
\end{prop}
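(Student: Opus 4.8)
The plan is to extract the limits of the sequences $(\Phi_i)_i$ and $(\Psi_i)_i$ via the usual combination of a uniform $L^\infty$ bound, a uniform $L^2_1$ bound on compact subsets of $X\setminus Z$, weak compactness, and a Moser–Harnack type argument to rule out that the limit is identically zero. The normalizations $\int_X|\Phi_i|^2_{h_0}\omega_i^n=1$ and $\int_X|\Psi_i|^2_{h_0}\omega_i^n=1$, together with the uniform pointwise bounds $\|\Phi_i\|_{L^\infty(X)}\le C$ and $\|\Psi_i\|_{L^\infty(X)}\le C$ from Proposition \ref{L-infty estimate}, are the starting point. I will carry out the argument for $\Phi_i$; the argument for $\Psi_i$ is identical after swapping the roles of the connections $\nabla_i^*\otimes\nabla_{h_0}$ and $\nabla_{h_0}^*\otimes\nabla_i$ as in the proof of Proposition \ref{L-infty estimate}.

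First I would fix an exhaustion of $X\setminus Z$ by relatively compact open sets $U_1\Subset U_2\Subset\cdots$, recalling that on $X\setminus D$ the metrics $\omega_i$ converge in $C^\infty_{\loc}$ to $T$, which is smooth K\"ahler there, and that on $X\setminus Z$ the connections $\nabla_i$ converge in $C^\infty_{\loc}$ to the $T$-admissible HYM connection $\nabla_\infty$ modulo $h_0$-unitary gauge (Corollary \ref{Uhlenbeck compactness}). On each $U_k$ the sections $\Phi_i$ are holomorphic with respect to $\delbar^{\nabla_i}$, i.e. $\delbar^{\nabla_i}\Phi_i=0$ where $\nabla_i$ here abbreviates $\nabla_i^*\otimes\nabla_{h_0}$ on $\Hom(\underline E,\underline E)$. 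Using $\delbar^{\nabla_i}\Phi_i=0$ and the Chern–Weitzenb\"ock/Bochner identity together with the $L^\infty$ bound, one gets on $U_k$ a uniform bound $\int_{U_k}|\nabla_i\Phi_i|^2_{\omega_i,h_0}\,\omega_i^n\le C_k$: indeed the standard computation gives $\Delta_{\omega_i}|\Phi_i|^2 = |\nabla^{1,0}_i\Phi_i|^2 + \langle(\text{curvature terms})\Phi_i,\Phi_i\rangle$, and integrating against a cutoff supported in $U_{k+1}$ and equal to $1$ on $U_k$, the boundary terms are controlled by $\|\Phi_i\|_{L^\infty}$ while the curvature terms are controlled by $\|F_{\nabla_i}\|_{L^2}$, which is uniformly bounded by Lemma \ref{construction of HYM conns}(2), and $\|F_{h_0}\|_{L^\infty}$. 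Since $\omega_i\to T$ smoothly on $U_k$, these are genuine uniform $L^2_1(U_k)$ bounds with respect to a fixed metric. By weak-$L^2_1$ compactness and a diagonal argument over $k$, after passing to a subsequence $\Phi_{i_j}\rightharpoonup\Phi_\infty$ weakly in $L^2_1(U_k)$ and strongly in $L^2_{\loc}(X\setminus Z)$ by Rellich. Passing to the limit in $\delbar^{\nabla_{i_j}}\Phi_{i_j}=0$, using the $C^\infty_{\loc}$ convergence $\nabla_{i_j}\to\nabla_\infty$, shows $\delbar^{\nabla_\infty}\Phi_\infty=0$ weakly, hence (elliptic regularity for $\delbar$) $\Phi_\infty$ is a genuine holomorphic section of $\Hom(E_\infty,E)$ over $X\setminus Z$, and it is bounded by $C$.

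The crux is nontriviality: ruling out $\Phi_\infty\equiv 0$. Here I would use the normalization $\int_X|\Phi_i|^2_{h_0}\omega_i^n=1$ and a reverse-type estimate to prevent all the mass from escaping to $Z$. The cleanest route is a mean-value / Harnack argument: since $u_i:=\log(|\Phi_i|^2+1)+C^2$ satisfies $\Delta_{\omega_i}u_i\ge -N$ on all of $X$ and $\sup_X u_i \le C$ uniformly (the conclusion of Proposition \ref{L-infty estimate}), one also gets a lower bound $\int_X u_i\,\omega_i^n\ge c_0>0$ from the normalization, because $\int_X u_i\,\omega_i^n \ge \int_X\log(|\Phi_i|^2+1)\,\omega_i^n$ and, since $\log(1+t)\ge t/(1+C)$ for $0\le t\le C$ while $\int_X |\Phi_i|^2\omega_i^n=1$, the integral is bounded below by $1/(1+C)$. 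Combined with the subharmonicity $\Delta_{\omega_i}u_i\ge -N$ and the uniform Sobolev inequality of Lemma \ref{sob ineq}, a De Giorgi–Moser iteration in the other direction (local lower bound by the average) gives a uniform lower bound $\inf_{U_k}|\Phi_{i}|^2\ge \delta_k>0$ on a fixed $U_k$ disjoint from a neighborhood of $Z$ — or, more elementarily, since $|\Phi_\infty|^2 = \lim |\Phi_{i_j}|^2$ in $L^1_{\loc}(X\setminus Z)$ and $|\Phi_{i_j}|^2\le C$, if $\Phi_\infty\equiv 0$ then $\int_{U_k}|\Phi_{i_j}|^2\omega_{i_j}^n\to 0$ for every $k$, so all the unit mass concentrates in arbitrarily small neighborhoods of $Z$; but $|\Phi_i|^2\le C$ is a uniform bound and $\omega_i^n=e^{F_i}\omega_0^n$ with $\int_X|F_i|^p e^{F_i}\omega_0^n\le C$ and $p>n>1$, so $e^{F_i}$ is uniformly integrable, hence the $\omega_i^n$-mass of a small neighborhood of $Z$ is uniformly small — contradiction. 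I expect this last concentration-compactness step to be the main obstacle, precisely because $Z$ contains the divisor $D$ where $\omega_i$ degenerates and one must use the uniform integrability of $e^{F_i}$ (equivalently the $L^p$ bound on $\log(\omega_i^n/\omega_0^n)$ from Assumption \ref{assumption2}(c) / Lemma \ref{positive current nef big}) to show no mass escapes to $D$; the codimension-$\ge 2$ part $\Sigma$ is harmless by the usual capacity argument. Finally, the same scheme applied with $\nabla_{h_0}^*\otimes\nabla_i$ and the normalization $\int_X|\Phi_i^{-1}|^2_{h_0}\omega_i^n=1$ yields the nonzero holomorphic $\Psi_\infty\in H^0(X\setminus Z,\Hom(E,E_\infty))$, completing both parts.
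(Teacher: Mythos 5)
Your proposal is correct and follows essentially the same route as the paper: a local $L^2_1$ bound on compact subsets of $X\setminus Z$ obtained from $\delbar^{\nabla_i}\Phi_i=0$ and integration by parts against cutoffs, Rellich compactness with a diagonal argument, weak holomorphicity of the limit upgraded by elliptic regularity (the paper uses convolution plus Montel), and nontriviality via the non-concentration of the unit $\omega_i^n$-mass near $Z$ (the paper phrases this through the weak convergence $\omega_i^n\to\langle T^n\rangle$, which puts no mass on $Z$, rather than through uniform integrability of $e^{F_i}$, but these are the same point).
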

\begin{proof}
We only prove $(1)$ since the proof of $(2)$ is the same. Let $h_D'$ be a smooth hermitian metric on $\mathcal{O}(D)$. We denote by $\nabla_{i}:=\nabla_{i}^*\otimes\nabla_{h_0}\otimes\nabla_{h_D'}$ an $(h_0^*\otimes h_0\otimes h_D')$-connection on $\Hom(E_i,E)\otimes \mathcal{O}(D)$.
If we use the identity $\Box^{\partial^{\nabla_i}}_{\omega_i}=\Box^{\delbar^{\nabla_i}}_{\omega_i}-\sqrt{-1}[\Lambda_{\omega_i},F_{\nabla_i}]$ and $\delbar^{\nabla_i}(s_D^m\Phi_i)=0$, we obtain
\begin{align*}
\Delta^{\nabla_i}_{\omega_i}(s_D^m\Phi_i)
&=2\Box^{\delbar^{\nabla_i}}_{\omega_i}(s_D^m\Phi_i) -\sqrt{-1}\Lambda_{\omega_i}F_{\nabla_i}\cdot (s_D^m\Phi_i)\\
&=-\sqrt{-1}\Lambda_{\omega_i}F_{\nabla_i}\cdot (s_D^m\Phi_i).
\end{align*}
Thus we obtain
\begin{equation}\label{laplacian L-infty estimate}
\|\Delta_{\omega_i}^{\nabla_i}(s_D^m\Phi_i)\|_{L^{2}(X\setminus Z^{\varepsilon})}\le C_{\varepsilon}
\end{equation}
for any $\varepsilon>0$. Let $(B_k, D_j)_{k=1,\ldots,l, j=1,\ldots m}$ be an open cover of $X$ consisting of finite number of Euclidean open balls in $X$ such that $X\setminus Z^{\varepsilon}\Subset \bigcup_kB_k\Subset X\setminus Z^{\varepsilon/2}$, $Z^{\varepsilon/2}\Subset \bigcup_jD_j\Subset Z^{\varepsilon}$ and $(\eta_k, \psi_j)_{k,j}$ be a partition of unity associated to $(B_k, D_j)_{k,j}$. Then $\Supp(\eta_k)\subset X\setminus Z^{\varepsilon/2}$. Next we show 
\begin{equation}\label{L21 estimate}
\|\nabla_i(\eta_ks_D^m\Phi_i)\|^2_{L^2(X)}\le (C'_{\varepsilon/2}+1)\|s_D^m\Phi_i\|_{L^2(X\setminus Z^{\varepsilon/2})}^2 + \|\Delta_{\omega_i}^{\nabla_i}(s_D^m\Phi_i)\|_{L^2(X\setminus Z^{\varepsilon/2})}^2\le C_{\varepsilon}.
\end{equation}
First we compute as follows:
\begin{align*}
(\Delta^{\nabla_i}_{\omega_i}(s_D^m\Phi_i),\eta_k^2s_D^m\Phi_i)_{L^2}
&=(\nabla_i(\eta_k^2s_D^m\Phi_i),\nabla_is_D^m\Phi_i)\\
&=(2\eta_k\nabla\eta_k\cdot s_D^m\Phi_i+\eta_k^2\nabla_i(s_D^m\Phi_i),\nabla_i(s_D^m\Phi_i))\\
&=2(s_D^m\Phi_i\nabla\eta_k,\eta_k\nabla_i(s_D^m\Phi_i))+\|\eta_k\nabla_i(s_D^m\Phi_i)\|^2\\
&\ge-2(2\|s_D^m\Phi_i\nabla\eta_k\|)(\frac{1}{2}\|\eta_k\nabla_i(s_D^m\Phi_i)\|) +\|\eta_k\nabla_i(s_D^m\Phi_i)\|^2\\
&\ge-2(4\|s_D^m\Phi_i\nabla\eta_k\|^2+\frac{1}{4}\|\eta_k\nabla_i(s_D^m\Phi_i)\|^2)+\|\eta_k\nabla_i(s_D^m\Phi_i)\|^2\\
&=-8\|s_D^m\Phi_i\nabla\eta_k\|^2+\frac{1}{2}\|\eta_k\nabla_i(s_D^m\Phi_i)\|^2.
\end{align*}
Thus, we obtain
\begin{align*}
\|\eta_k\nabla_i(s_D^m\Phi_i)\|^2
&\le 2(\Delta^{\nabla_i}_{\omega_i}(s_D^m\Phi_i),\eta_k^2s_D^m\Phi_i)+16\|s_D^m\Phi_i\nabla\eta_k\|^2\\
&\le 2\|\Delta^{\nabla_i}_{\omega_i}(s_D^m\Phi_i)\|_{L^2(X\setminus Z^{\varepsilon/2})}\|s_D^m\Phi_i\|_{L^2(X\setminus Z^{\varepsilon/2})}+C_{\varepsilon/2}\|s_D^m\Phi_i\|_{L^2(X\setminus Z^{\varepsilon/2})}^2\\
&\le (1+C_{\varepsilon/2})\|s_D^m\Phi_i\|_{L^2(X\setminus Z^{\varepsilon/2})}^2 + \|\Delta^{\nabla_i}_{\omega_i}(s_D^m\Phi_i)\|^2_{L^2(X\setminus Z^{\varepsilon/2})}.
\end{align*}
Therefore,  by proposition \ref{L-infty estimate} together with $(\ref{laplacian L-infty estimate})$, the following estimate follows:
\begin{align*}
\|\nabla_i(\eta_ks_D^m\Phi_i)\|^2
&\le 2\|\nabla\eta_k\|^2\|s_D^m\Phi_i\|^2_{L^2(X\setminus Z^{\varepsilon/2})}+2\|\eta_k\nabla_i(s_D^m\Phi_i)\|^2\\
&\le 2(1+C'_{\varepsilon/2})\|s_D^m\Phi_i\|^2_{L^2(X\setminus Z^{\varepsilon/2})}+2\|\Delta^{\nabla_i}_{\omega_i}(s_D^m\Phi_i)\|^2_{L^2(X\setminus Z^{\varepsilon/2})}\\
&\le C''_{\varepsilon/2},
\end{align*}
which shows (\ref{L21 estimate}). 
By Corollary \ref{Uhlenbeck compactness}, a sequence of $\End(\underline{E})$-valued 1-forms $\nabla_i-\nabla_{\infty}=: A_i$ converges to $0$ in $C^{\infty}_{\loc}(X\setminus Z)$ up to $h_0$-unitary transformations. Remember that $\omega_i\to T$ smoothly on $X\setminus Z^{\varepsilon/2}$ and $T$ is smooth K\"{a}hler on $X\setminus Z$.  Then, since $h_0$-unitary transformations preserve the norms, we obtain
\begin{equation}\label{uniform L21 estimate}
\|\nabla_{\infty}(\eta_ks_D^m\Phi_i)\|^2_{L^2(X)}
\le C_{\varepsilon/2}\|s_D^m\Phi_i\|^2_{L^2(X\setminus Z^{\varepsilon/2})}+2\|\Delta^{\nabla_i}_{\omega_i}(s_D^m\Phi_i)\|^2_{L^2(X\setminus Z^{\varepsilon/2})} \le C'_{\varepsilon/2}. 
\end{equation}
Since $|s_D|>C_{\varepsilon}^{-1}$ on $X\setminus Z^{\varepsilon}$, we have $\int_{X\setminus Z^{\varepsilon}}|\Phi_i|^2\omega_i^n\le C_{\varepsilon}$ and $\int_{X\setminus Z^{\varepsilon}}|\nabla_{\infty}(\eta_k\Phi_i)|^2\omega_i^n\le C_{\varepsilon}$.

Next we construct a nonzero subsequential $L^2_{\loc}$-limit $\Phi_{\infty}$ of $\Phi_i$.
Since $\Supp(\eta_k)\subset X\setminus Z^{\varepsilon/2}$, each $\eta_k\Phi_i$ is compactly supported in $X\setminus Z^{\varepsilon/2}$.
We recall that the inclusion $L^2_{1,0}(X\setminus Z^{\varepsilon/2})\to L^2(X\setminus Z^{\varepsilon/2})$ is compact by the Sobolev embedding theorem (e.g. \cite{GilT} Corollary 7.11). Thus, there exists a subsequence $(\Phi_{i,\varepsilon})_{i}$ of $(\Phi_i)_i$ such that, $\Sigma_{k=1}^{l}\eta_k\Phi_{i,\varepsilon}\to \Phi_{\varepsilon}$ in $L^2(X\setminus Z^{\varepsilon/2})$ as a section of $\Hom(\underline{E},\underline{E})$. We can see $\Sigma_{k}^{l}\eta_k=1$ on $X\setminus Z^{\varepsilon}$ by the construction of $\eta_k$. Therefore we have $\Phi_{i\varepsilon}\to \Phi_{\varepsilon}$ in $L^2(X\setminus Z^{\varepsilon})$. Let $(\varepsilon_j)_j$ be a sequence of small numbers which monotonically decrease to 0. Then we can inductively extract a subsequence $(\Phi_{k,\varepsilon_j})_k$ for each $\varepsilon_j$ such that $(\Phi_{k,\varepsilon_j})_k$ is a subsequence of $(\Phi_{k,\varepsilon_{j-1}})_k$ and each $(\Phi_{k,\varepsilon_j})_k$ converges to $\Phi_{\varepsilon_j}$ in $L^2(X\setminus Z^{\varepsilon_j})$. Then the subsequence $(\Phi_{j\varepsilon_j})_j$ converges to $\Phi_{\infty}$ in $L^2_{\loc}(X\setminus Z)$, where $\Phi_{\infty}$ is an $L^2$-local section of $\Hom(\underline{E}, \underline{E})$ over $X\setminus Z$. 

We recall that $\Phi_i\to \Phi_{\infty}$ in $L^2_{\loc}(X\setminus Z)$, $\int_X|\Phi_i|^2\omega_i^n=1$ and $\sup_X|\Phi_i|\le C$. Then, 
if we put $f_i:=|\Phi_i|$ in the following claim, we obtain that $\Phi_{\infty}\ne 0$.
\begin{claim}\label{nontrivial limit lemma}
Suppose that a sequence of functions $f_i$ on $X$ satisfies the following conditions:
\begin{itemize}
\item $\int_X|f_i|^2\omega_i^n= 1$,
\item $\sup_{X\setminus Z}|f_i|\le C$,
\item $f_i\to f$ in $L^2_{\loc}(X\setminus Z)$.
\end{itemize}
Then $f\in L^2_{\loc}(X\setminus Z)$ is nonzero.
\end{claim}
\begin{proof}
By the first assumption, $|f_i|^2\omega_i^n$ subsequentially converges to a probability measure $d\mu$ weakly: $|f_i|^2\omega_i^n\to d\mu$. Hence, if $f_i\to f=0$ in $L^2_{\loc}(X\setminus Z)$, then we obtain 
$$
\int_Zd\mu
=\lim_{\varepsilon\to0}\int_{Z^{\varepsilon}}d\mu
=\lim_{\varepsilon\to 0}\lim_{i\to\infty}\int_{Z^{\varepsilon}}|f_i|^2\omega_i^n
=\int_Xd\mu-\lim_{\varepsilon\to 0}\lim_{i\to\infty}\int_{X\setminus Z^{\varepsilon}}|f_i|^2\omega_i^n
=1
$$
On the other hand, the second assumption leads to
$$
\int_Zd\mu
=\lim_{\varepsilon\to0}\int_{Z^{\varepsilon}}d\mu
=\lim_{\varepsilon\to 0}\lim_{i\to\infty}\int_{Z^{\varepsilon}}|f_i|^2\omega_i^n
\le C\lim_{\varepsilon\to 0}\lim_{i\to\infty}\int_{Z^{\varepsilon}}\omega_i^n
=C\lim_{\varepsilon\to0}\int_{Z^{\varepsilon}}\langle T^n\rangle
=0,
$$
it is a contradiction. Thus $f\ne 0$.
\end{proof}
Finally we check that $\Phi_{\infty}$ is a holomorphic section of $\Hom(E_{\infty},E)$ with respect to a holomorphic structure $\delbar_{\infty}:=(\delbar^{\nabla_{\infty}})^*\otimes \delbar^{\nabla_{h_0}}$. We denote by $\delbar_i:=(\delbar^{\nabla_i})^*\otimes \delbar^{\nabla_{h_0}}$. We remark that these operators are differential operators on a fixed smooth bundle $\Hom(\underline{E},\underline{E})$ and $\delbar_i$ smoothly converges to $\delbar_{\infty}$ on $X\setminus Z$. Then we can see $\delbar_{\infty}\Phi_{\infty}=\lim_{j\to\infty}\delbar_j\Phi_{j\varepsilon_j}=0$, where the limit means the weak convergence. Let $U\Subset X\setminus Z$ be a small open set so that $\Hom(E_{\infty}, E)$ is holomorphically trivialized. Let $V\Subset U$ be an open subset and $\eta:X\to \mathbb{R}_{\ge 0}$ be a cut-off function such that $\eta\equiv 1$ on $V$ and $\Supp(\eta)\subset U$. Then we can see that $\eta\Phi_{\infty}$ is a bounded function over $\mathbb{C}^n$ which is weakly holomorphic over $V$. Let $\rho_{\varepsilon}$ be a smoothing kernel on $\mathbb{C}^n$. Then the convolution $\rho_{\varepsilon}*(\eta\Phi_{\infty})$ is smooth on $\mathbb{C}^n$ and holomorphic on $V$. By the mean value inequality, we can see
 $$
 \|\rho_{\varepsilon}*(\eta\Phi_{\infty})\|_{L^{\infty}(B)}\le C\|\rho_{\varepsilon}*(\eta\Phi_{\infty})\|_{L^2(B)}
 $$
 for any Euclidean ball $B\Subset V$.
 Since $\rho_{\varepsilon}*(\eta\Phi_{\infty})$ converges to $\eta\Phi_{\infty}$ in the $L^2$-topology, we obtain the uniform estimate
 $$
  \|\rho_{\varepsilon}*(\eta\Phi_{\infty})\|_{L^{\infty}(B)}\le C_B.
 $$
 Since each $\rho_{\varepsilon}*(\eta\Phi_{\infty})$ is holomorphic on $B$, the Montel theorem in the theory of several complex variables implies the limit $\eta\Phi_{\infty}=\Phi_{\infty}$ is holomorphic over $B$. Thus we obtain that $\Phi_{\infty}$ is a nonzero holomorphic section of $\Hom(E_{\infty}, E)$ over $X\setminus Z$.
\end{proof}
Since $\Phi_i(z)\to \Phi_{\infty}(z)$, $\Psi_i(z)\to \Psi_{\infty}(z)$ almost everywhere $z\in X\setminus Z$, $\nabla_i\to\nabla_{\infty}$ up to $h_0$-unitary transformations and $h_0$-unitary transformations preserve norms, we obtain the following by Proposition \ref{L-infty estimate}:
\begin{corr}\label{L-infty estimate of limit object}
We obtain nontrivial sheaf morphisms $\Phi_{\infty}:E_{\infty}\to E|_{X\setminus D}$ and $\Psi_{\infty}:E|_{X\setminus D}\to E_{\infty}$ which satisfy
$\|\Phi_{\infty}\|_{L^{\infty}(X\setminus Z)}<\infty$ and  $\|\Psi_{\infty}\|_{L^{\infty}(X\setminus Z)}<\infty$
\end{corr}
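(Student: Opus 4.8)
The plan is to deduce the two $L^{\infty}$ bounds directly from the uniform estimates of Proposition \ref{L-infty estimate} by passing to the limit along the subsequence produced in Proposition \ref{nontrivial limit}. First I would record the bookkeeping point that makes this legitimate: by construction $E_{\infty}$, each $E_i$ and $E$ all share the same underlying complex vector bundle $\underline{E}$ equipped with the fixed Hermitian metric $h_0$, so $\Hom(E_i,E)$ and $\Hom(E_{\infty},E)$ have the same underlying smooth bundle $\End(\underline{E})$ with the fixed metric $h_0^{*}\otimes h_0$. Hence the pointwise norms $|\Phi_i|_{h_0}$, $|\Phi_{\infty}|_{h_0}$ (and likewise for $\Psi_i$, $\Psi_{\infty}$) are all measured by one and the same background metric, independent of $i$.

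Next I would upgrade the $L^{2}_{\loc}$-convergence to pointwise almost everywhere convergence: since $\Phi_{i_j}\to\Phi_{\infty}$ in $L^{2}_{\loc}(X\setminus Z)$ by Proposition \ref{nontrivial limit}, after extracting a further subsequence (not relabelled) we obtain $\Phi_{i_j}(z)\to\Phi_{\infty}(z)$ for almost every $z\in X\setminus Z$, and similarly $\Psi_{i_j}(z)\to\Psi_{\infty}(z)$ a.e. Here one must keep track of the fact that the convergence in Proposition \ref{nontrivial limit} takes place only after the $h_0$-unitary gauge transformations of Corollary \ref{Uhlenbeck compactness} have been applied to identify $\nabla_i$ with $\nabla_{\infty}$; but the uniform bound $\|\Phi_i\|_{L^{\infty}(X)}<C$ of Proposition \ref{L-infty estimate} is \emph{invariant} under $h_0$-unitary gauge change, so it holds just as well for the gauge-transformed sequence and may be transported to the limit.

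Then I would simply pass to the limit: for almost every $z\in X\setminus Z$ we have $|\Phi_{\infty}(z)|_{h_0}=\lim_{j}|\Phi_{i_j}(z)|_{h_0}\le C$, with $C$ the constant of Proposition \ref{L-infty estimate}. Since $\Phi_{\infty}\in H^{0}(X\setminus Z,\Hom(E_{\infty},E))$ is holomorphic, hence continuous, on $X\setminus Z$, the inequality $|\Phi_{\infty}|_{h_0}\le C$ that holds a.e. must hold at every point of $X\setminus Z$, so $\|\Phi_{\infty}\|_{L^{\infty}(X\setminus Z)}\le C<\infty$. The identical argument, applied to the second bullet of Proposition \ref{L-infty estimate} and to $\Psi_{i_j}\to\Psi_{\infty}$, yields $\|\Psi_{\infty}\|_{L^{\infty}(X\setminus Z)}<\infty$.

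Because this is a routine limiting procedure, there is no serious obstacle; the only delicate points are the two flagged above, namely that all endomorphism norms are measured by the fixed background metric $h_0$ on the fixed bundle $\underline{E}$, and that the $L^{\infty}$ estimate is gauge invariant so that it survives the Uhlenbeck gauge fixing before being carried over to the limit object.
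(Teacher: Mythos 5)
Your proposal is correct and is essentially identical to the paper's argument: the paper likewise deduces the bound from the uniform $L^{\infty}$ estimate of Proposition \ref{L-infty estimate} by using almost-everywhere pointwise convergence of $\Phi_i$ and $\Psi_i$ together with the fact that the $h_0$-unitary gauge transformations of Corollary \ref{Uhlenbeck compactness} preserve the norms. The two points you flag (fixed background metric on the fixed underlying bundle, gauge invariance of the $L^{\infty}$ bound) are exactly the ones the paper invokes.
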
 
We also need the uniform estimates of the first derivatives.
\begin{prop}\label{uniform esti of diff}
There exists a constant $C>0$ such that the following inequality holds for any $i$:
$$
\int_X|(\nabla_i\otimes\nabla_{h_0}^*\otimes\nabla_{h_D^m})s_D^{2m}\Psi_i|^2_{h_0\otimes h_0^*\otimes h_{D^m}, \omega_i}\omega_i^n\le C.
$$
The same estimate holds for $\Phi_i$. In particular, we have 
$$
\int_{X\setminus Z}|(\nabla_{\infty}\otimes\nabla_{h_0}^*\otimes\nabla_{h_D^m})s_D^{2m}\Psi_{\infty}|^2_{h_0\otimes h_0^*\otimes h_{D^m}, T}T^n<\infty.
$$
\end{prop}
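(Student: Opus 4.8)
The plan is to establish the uniform $L^{2}$-bound on each compact K\"ahler manifold $(X,\omega_{i})$ by a Bochner/integration-by-parts argument, and then to deduce the limiting statement by weak lower semicontinuity on an exhaustion of $X\setminus Z$ by compact sets. First I would note that $\sigma_{i}:=s_{D}^{2m}\Psi_{i}$ is a \emph{smooth global} section of $\Hom(E,E_{i})\otimes\mathcal{O}(2mD)$ over all of $X$ (since $H_{i}$ is smooth and positive definite on $X$), and that it is \emph{holomorphic}: $\Psi_{i}=\Phi_{i}^{-1}$ intertwines $\delbar^{E}$ and $\delbar^{\nabla_{i}}$, being the inverse of the holomorphic isomorphism $\Phi_{i}$ of Lemma \ref{app isoms}, and $s_{D}$ is holomorphic. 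I write $\nabla_{i}$ also for the induced connection $\nabla_{i}^{*}\otimes\nabla_{h_{0}}\otimes\nabla_{h_{D^{2m}}}$ on this twisted bundle, which is its Chern connection because $\nabla_{i}$ on $E_{i}$ is the Chern connection of $(E_{i},h_{0})$ by Lemma \ref{construction of HYM conns}$(1)$; thus $\delbar^{\nabla_{i}}\sigma_{i}=0$. Since $X$ is compact without boundary, the Bochner--Kodaira identity for holomorphic sections gives, with no boundary terms,
$$
\int_{X}|\nabla_{i}\sigma_{i}|^{2}_{\omega_{i}}\,\omega_{i}^{n}=\int_{X}\big\langle\sqrt{-1}\Lambda_{\omega_{i}}F_{\nabla_{i}}(\sigma_{i}),\,\sigma_{i}\big\rangle\,\omega_{i}^{n}.
$$

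Next I would expand the twisted curvature as $\sqrt{-1}\Lambda_{\omega_{i}}F_{\nabla_{i}}(\sigma_{i})=\lambda_{i}\sigma_{i}-\sigma_{i}\circ(\sqrt{-1}\Lambda_{\omega_{i}}F_{h_{0}})+(\sqrt{-1}\Lambda_{\omega_{i}}F_{h_{D^{2m}}})\sigma_{i}$, the first term coming from the $\omega_{i}$-HYM equation, and bound the three contributions uniformly in $i$. First, $|\lambda_{i}|$ is bounded ($\lambda_{i}\to\lambda_{\infty}$) and $\int_{X}|\sigma_{i}|^{2}\omega_{i}^{n}\le C\int_{X}\omega_{i}^{n}=C\alpha_{i}^{n}$ is bounded, using $|\sigma_{i}|=|s_{D}|_{h_{D}}^{2m}|\Psi_{i}|\le C$ by Proposition \ref{L-infty estimate}. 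Second, $\sqrt{-1}\Lambda_{\omega_{i}}F_{h_{0}}$ is controlled exactly as in the proof of Proposition \ref{L-infty estimate} (via $h_{0}=\pi^{*}h_{F}$ and $T=\pi^{*}\omega\ge C^{-1}\pi^{*}\omega_{Y}$), so that term is also $\le C\int_{X}|\sigma_{i}|^{2}\omega_{i}^{n}$. Third, the $\mathcal{O}(2mD)$-term acts as a scalar, so its integral equals $2mn\int_{X}|\sigma_{i}|^{2}\,\sqrt{-1}F_{h_{D}}\wedge\omega_{i}^{n-1}$, and since $|\sigma_{i}|^{2}\le C$ and $-C\omega_{0}\le\sqrt{-1}F_{h_{D}}\le C\omega_{0}$ this is bounded in absolute value by $C\int_{X}\omega_{0}\wedge\omega_{i}^{n-1}=C\,\{\omega_{0}\}\cdot\alpha_{i}^{n-1}$, hence uniformly bounded as $\alpha_{i}\to\alpha$. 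Thus $\int_{X}|\nabla_{i}\sigma_{i}|^{2}_{\omega_{i}}\omega_{i}^{n}\le C$ with $C$ independent of $i$, and the same computation with the curvature signs of $\Hom(E_{i},E)$ reversed handles $s_{D}^{2m}\Phi_{i}$.

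For the limiting statement I would fix $\varepsilon>0$ and work on the compact set $X\setminus Z^{\varepsilon}\subset X\setminus Z$, where $\omega_{i}\to T$ and $\nabla_{i}\to\nabla_{\infty}$ in $C^{\infty}$ up to $h_{0}$-unitary gauge, which preserves all norms; hence $\int_{X\setminus Z^{\varepsilon}}|\nabla_{\infty}\sigma_{i}|^{2}_{T}T^{n}\le 2C$ for $i$ large. By Proposition \ref{nontrivial limit}, $\sigma_{i}\to s_{D}^{2m}\Psi_{\infty}$ in $L^{2}(X\setminus Z^{\varepsilon})$, so together with the uniform $L^{2}$-bound on the derivatives this gives $\sigma_{i}\rightharpoonup s_{D}^{2m}\Psi_{\infty}$ weakly in $L^{2}_{1}(X\setminus Z^{\varepsilon},T)$, and weak lower semicontinuity of the $L^{2}$-norm yields
$$
\int_{X\setminus Z^{\varepsilon}}\big|\nabla_{\infty}(s_{D}^{2m}\Psi_{\infty})\big|^{2}_{T}\,T^{n}\le\liminf_{i}\int_{X\setminus Z^{\varepsilon}}|\nabla_{\infty}\sigma_{i}|^{2}_{T}\,T^{n}\le 2C.
$$
Letting $\varepsilon\to0$, so that $X\setminus Z^{\varepsilon}$ increases to $X\setminus Z$, finishes the proof, the $\Phi_{\infty}$ case being identical.

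The main obstacle, and the only place requiring real care, is the uniformity in $i$ of the curvature estimates \emph{near the divisor $D$}, where $\omega_{i}$ degenerates and a priori $\sqrt{-1}\Lambda_{\omega_{i}}F_{h_{0}}$ and $\sqrt{-1}\Lambda_{\omega_{i}}F_{h_{D}}$ could blow up; this is exactly where the normalized $L^{\infty}$-bounds of Proposition \ref{L-infty estimate}, the hypotheses $h_{0}=\pi^{*}h_{F}$ and $T=\pi^{*}\omega\ge C^{-1}\pi^{*}\omega_{Y}$, the cohomological identity $\sqrt{-1}\Lambda_{\omega_{i}}F_{h_{D}}\,\omega_{i}^{n}=n\,\sqrt{-1}F_{h_{D}}\wedge\omega_{i}^{n-1}$, and a sufficiently high choice of $m$ in $s_{D}^{2m}$ (which moreover is what makes the final constant independent of $\varepsilon$, so that the bound survives in the current limit) all enter.
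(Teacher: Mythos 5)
Your proof is correct and follows essentially the same route as the paper: integrate the Weitzenb\"ock identity for the holomorphic section $s_D^{2m}\Psi_i$ over the compact manifold $(X,\omega_i)$, use the $\omega_i$-HYM equation to handle the $F_{\nabla_i}$-part, bound the remaining fixed-curvature contributions uniformly in $i$ using the $L^\infty$-bound of Proposition \ref{L-infty estimate}, and pass to the limit by lower semicontinuity on an exhaustion of $X\setminus Z$. The only (immaterial) difference is in how the background curvature terms are absorbed: the paper uses $\omega_i\ge C|s_D|^{2m}\omega_0$, hence $|s_D|^{2m}\Lambda_{\omega_i}\omega_0\le C$, so that the twist by $s_D^{2m}$ kills the degeneration of $\Lambda_{\omega_i}$ near $D$, whereas you control the $F_{h_0}$-term via the pullback structure $h_0=\pi^*h_F$ as in Proposition \ref{L-infty estimate} and the $\mathcal{O}(2mD)$-term via the cohomological identity $\sqrt{-1}\Lambda_{\omega_i}F_{h_D}\,\omega_i^n=n\sqrt{-1}F_{h_D}\wedge\omega_i^{n-1}$; both yield the same uniform constant.
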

\begin{proof}
Let $h_D'$ be a smooth hermitian metric on $\mathcal{O}(D)$. We denote by $\nabla_D:=\nabla_{i}^*\otimes\nabla_{h_0}\otimes\nabla_{h_D'}$ a $h_0^*\otimes h_0\otimes h_D'$-connection on $\Hom(E_i,E)\otimes \mathcal{O}(D)$.
If we use the identity $\Box^{\partial^{\nabla_D}}_{\omega_i}=\Box^{\delbar^{\nabla_D}}_{\omega_i}-\sqrt{-1}[\Lambda_{\omega_i},F_{\nabla_D}]$, the following calculation works:
\begin{align}\label{estimate on difference}
&\int_X\langle \nabla_D(s_D^{2m}\Phi_i),\nabla_D(s_D^{2m}\Phi_i)\rangle_{h_0^*\otimes h_0\otimes h_D'}\omega_i^n\notag\\
&=\int_X\langle\Delta^{\nabla_D}_{\omega_i}(s_D^{2m}\Phi_i),s_D^{2m}\Phi_i\rangle_{h_0^*\otimes h_0\otimes h_D'}\omega_i^n \notag\\
&=-\int_X\langle\sqrt{-1}\Lambda_{\omega_i}F_{\nabla_D}\cdot (s_D^{2m}\Phi_i), s_D^{2m}\Phi_i\rangle_{h_0^*\otimes h_0\otimes h_D'}\omega_i^n \notag\\
&=\int_X\langle-\sqrt{-1}\Lambda_{\omega_i}F_{\nabla_{h_0}\otimes\nabla_{h_D'}}\circ(s_D^{2m}\Phi_i),s_D^{2m}\Phi\rangle_{h_0^*\otimes h_0\otimes h_D'}\omega_i^n\notag\\
&\hspace{4mm}+\int_X\langle(s_D^{2m}\Phi_i)\circ(\sqrt{-1}\Lambda_{\omega_i}F_{\nabla_i}),s_D^{2m}\Phi_i\rangle_{h_0^*\otimes h_0\otimes h_D'}\omega_i^n\notag\\
&\le C\int_X\langle|s_D|^{2m}\sqrt{-1}\Lambda_{\omega_i}\omega_0\cdot(s_D^m\Phi_i),s_D^m\Phi_i\rangle_{h_0^*\otimes h_0\otimes h_D}\omega_i^n\notag\\
&\hspace{4mm}+|\lambda_i|\int_X|s_D^{2m}\Phi_i|_{h_0^*\otimes h_0\otimes h_D'}^2\omega_i^n\notag\\
&\le C.
\end{align}
\end{proof}

\subsection{proof of the main result}\label{pf of KH corr}
We work under Assumption \ref{assumption4}.
We remember that the underlying smooth hermitian vector bundle $\underline{E_{\infty}}$ of $E_{\infty}$ coincides with the underlying hermitian vector bundle $\underline{E}|_{X\setminus Z}$ by \cite{CW22} (refer to Corollary \ref{Uhlenbeck compactness}).
Furthermore, as a smooth section of $\Hom(\underline{E_{\infty}},\underline{E}|_{X\setminus Z})=\Hom(\underline{E}|_{X\setminus Z},\underline{E}|_{X\setminus Z})=\Hom(\underline{E}|_{X\setminus Z}, \underline{E_{\infty}})$, smooth endomorphisms $\Phi_{\infty}$ and $\Psi_{\infty}$ are both hermitian.
We first show the following:
\begin{prop}\label{adHYM}
Assume that $\Psi_{\infty}:E|_{X\setminus D}\to E_{\infty}$ is isomorphic. Let $h_0$ be a smooth hermitian metric on $\underline{E}$ and $\nabla_{\infty}$ be the $T$-admissible HYM connection on $(E_{\infty},h_0)$. Then $h_{\infty}:=\Psi_{\infty}^*h_0$ is a $T$-admissible HYM metric on $E$ whose Chern connection is $\Psi_{\infty}^{-1}\circ\nabla_{\infty}\circ\Psi_{\infty}$.
\end{prop}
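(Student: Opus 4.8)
The plan is to transport the $T$-admissible Hermitian-Yang-Mills structure from $E_{\infty}$ to $E$ along the isomorphism $\Psi_{\infty}$. Throughout I work on the Zariski open set $\Omega:=X\setminus Z$; on $\Omega$ the current $T$ is smooth K\"{a}hler, the bundle $E_{\infty}$ is locally free with $T$-admissible HYM connection $\nabla_{\infty}$, and, by hypothesis, $\Psi_{\infty}$ restricts to an isomorphism of holomorphic vector bundles $E|_{\Omega}\to E_{\infty}$. Set $h_{\infty}:=\Psi_{\infty}^{*}h_{0}$, i.e. $h_{\infty}(u,v):=h_{0}(\Psi_{\infty}u,\Psi_{\infty}v)$. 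Since $\Psi_{\infty}$ is a smooth, fibrewise invertible bundle map on $\Omega$ and $h_{0}$ is smooth, $h_{\infty}$ is a smooth (positive definite) hermitian metric on $E|_{\Omega}$, and by construction $\Psi_{\infty}\colon(E|_{\Omega},h_{\infty})\to(E_{\infty},h_{0})$ is a holomorphic isometry.

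First I would identify the Chern connection of $(E,h_{\infty})$. Consider $\nabla':=\Psi_{\infty}^{-1}\circ\nabla_{\infty}\circ\Psi_{\infty}$. Because $\Psi_{\infty}$ is an isometry between $(E|_{\Omega},h_{\infty})$ and $(E_{\infty},h_{0})$ and $\nabla_{\infty}$ is an $h_{0}$-connection, a direct computation with the Leibniz rule shows that $\nabla'$ is an $h_{\infty}$-connection; because $\Psi_{\infty}$ is a holomorphic section of $\Hom(E,E_{\infty})$, i.e. $\delbar^{\nabla_{\infty}}\circ\Psi_{\infty}=\Psi_{\infty}\circ\delbar^{E}$, its $(0,1)$-part satisfies $\delbar^{\nabla'}=\Psi_{\infty}^{-1}\circ\delbar^{\nabla_{\infty}}\circ\Psi_{\infty}=\delbar^{E}$. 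By uniqueness of the Chern connection of a holomorphic hermitian bundle, $\nabla_{h_{\infty}}=\nabla'=\Psi_{\infty}^{-1}\circ\nabla_{\infty}\circ\Psi_{\infty}$, which is the asserted formula; in particular $F_{\nabla_{h_{\infty}}}=\Psi_{\infty}^{-1}\circ F_{\nabla_{\infty}}\circ\Psi_{\infty}$.

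It then remains to verify the three defining properties of a $T$-admissible HYM connection for $\nabla_{h_{\infty}}$ on $\Omega$. Integrability is immediate since $\delbar^{\nabla_{h_{\infty}}}\circ\delbar^{\nabla_{h_{\infty}}}=\delbar^{E}\circ\delbar^{E}=0$. For the $T$-HYM equation, the operator $\sqrt{-1}\Lambda_{T}$ acts only on the form part, so it commutes with conjugation by $\Psi_{\infty}$ and
\[
\sqrt{-1}\Lambda_{T}F_{\nabla_{h_{\infty}}}=\Psi_{\infty}^{-1}\circ\big(\sqrt{-1}\Lambda_{T}F_{\nabla_{\infty}}\big)\circ\Psi_{\infty}=\Psi_{\infty}^{-1}\circ(\lambda\,\Id)\circ\Psi_{\infty}=\lambda\,\Id,
\]
so $\nabla_{h_{\infty}}$ solves the $T$-HYM equation with the same constant $\lambda$. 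Finally, since $\Psi_{\infty}$ is a pointwise isometry, conjugation by it preserves the fibrewise norm of $\End$-valued forms, whence $|F_{\nabla_{h_{\infty}}}|_{h_{\infty},T}=|F_{\nabla_{\infty}}|_{h_{0},T}$ pointwise on $\Omega$; integrating and using the $T$-admissibility of $\nabla_{\infty}$ yields $\int_{\Omega}|F_{\nabla_{h_{\infty}}}|_{T}^{2}\,T^{n}=\int_{\Omega}|F_{\nabla_{\infty}}|_{T}^{2}\,T^{n}<\infty$. Hence $h_{\infty}$ is a $T$-admissible HYM metric on $E$ whose Chern connection is $\Psi_{\infty}^{-1}\circ\nabla_{\infty}\circ\Psi_{\infty}$.

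This argument is essentially formal once $\Psi_{\infty}$ is an isomorphism, so the substantive difficulty does not lie here but in that hypothesis (which is established separately); the only genuine points of care are (i) reading ``isomorphic'' as a bundle isomorphism over the smooth locus $\Omega=X\setminus Z$, so that $h_{\infty}$ and $\Psi_{\infty}^{-1}\circ\nabla_{\infty}\circ\Psi_{\infty}$ are honestly smooth there, and (ii) the invariance of the $T$-admissibility integral under the isometric conjugation, which relies on the norm in the admissibility condition being the one induced by $h_{\infty}$ on $E$ (resp. $h_{0}$ on $E_{\infty}$), with respect to which $\Psi_{\infty}$ is an isometry.
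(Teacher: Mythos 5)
Your proof is correct and follows essentially the same route as the paper: identify the Chern connection of $h_{\infty}$ as the conjugate $\Psi_{\infty}^{-1}\circ\nabla_{\infty}\circ\Psi_{\infty}$ and then transport the integrability, the $T$-HYM equation, and the $L^2$ curvature bound through the conjugation. The only (harmless) difference is in how the pointwise identity $|F_{h_{\infty}}|_{h_{\infty},T}=|F_{\nabla_{\infty}}|_{h_0,T}$ is justified: you use directly that conjugation by the isometry $\Psi_{\infty}:(E|_{\Omega},h_{\infty})\to(E_{\infty},h_0)$ preserves the induced norm on endomorphism-valued forms, while the paper deduces it from a Chern--Weil-type pointwise identity whose two terms (a trace of $F\wedge F$ and the norm of $\Lambda_T F=\lambda\,\Id$) are each conjugation-invariant; both arguments are valid.
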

\begin{proof}
We can see that $\Psi_{\infty}^{-1}\circ\nabla_{\infty}\Psi_{\infty}$ is the Chern connection of $h_{\infty}$ by the similar way with Lemma \ref{construction of HYM conns}. Therefore it suffices to prove that the curvature tensor $F_{h_{\infty}}$ is $L^2$-integrable with respect to the norm defined by $h_{\infty}$ and $T$. We remark that $F_{h_{\infty}}=\Psi_{\infty}^{-1}\circ F_{\nabla_{\infty}}\circ \Psi_{\infty}$ and $\sqrt{-1}F_{h_{\infty}}^{*_{h_{\infty}}}=\sqrt{-1}F_{h_{\infty}}$. Then, the point-wise identity $|F_{h_{\infty}}|_{h_{\infty},T}^2T^n=\Tr(\sqrt{-1}F_{h_{\infty}}\wedge \sqrt{-1}F_{h_{\infty}})\wedge T^n+|\Lambda_TF_{h_{\infty}}|_{h_{\infty}}^2T^n$ on $X\setminus Z$ (c.f. \cite{Kob} Chapter 4 section 4) shows $|F_{h_{\infty}}|_{h_{\infty},T}^2=|F_{\nabla_{\infty}}|_{h_0,T}^2$ on $X\setminus Z$. Thus $h_{\infty}$ is a $T$-admissible HYM metric on $E|_{X\setminus D}$.
\end{proof}
Then we provide the proof in the case $E=\mathcal{O}(ND)$ for large $N>0$ where $D=E_{nK}(\alpha)$.
\begin{prop}\label{KH corr on nK locus}
We work under Assumption \ref{assumption4}.\\
If $N\ge3m+1$ and set $E=\mathcal{O}(ND)$, then $\Psi_{\infty}:\mathcal{O}(ND)|_{X\setminus D}\to \mathcal{O}(ND)_{\infty}$ is isomorphic. 
\end{prop}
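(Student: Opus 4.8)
The plan is to run the machinery developed above with $E=\mathcal{O}(ND)$ in place of $\pi^*F$, and then to show that the resulting morphism $\Psi_\infty$ has no zeros on $X\setminus D$. Since $\mathcal{O}(ND)$ has rank one, there is no bubbling for the associated abelian connections, so $\Sigma=\emptyset$ and $Z=D$; moreover $\mathcal{O}(ND)_\infty$ and $\Hom(\mathcal{O}(ND),\mathcal{O}(ND)_\infty)$, $\Hom(\mathcal{O}(ND)_\infty,\mathcal{O}(ND))$ are line bundles on $X\setminus D$, a nowhere-vanishing holomorphic morphism of line bundles is an isomorphism, and such an isomorphism automatically extends across a closed analytic subset of codimension $\ge 2$. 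Thus it suffices to produce the data of Theorem~\ref{limiting objects} and Corollary~\ref{L-infty estimate of limit object} for $E=\mathcal{O}(ND)$ and then to rule out zeros of $\Psi_\infty$; the holomorphic isomorphisms $\Phi_i=H_i^{-1/2}\colon E_i\to E$ of Lemma~\ref{app isoms} and their limits are taken verbatim.

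The only place where the hypothesis $N\ge 3m+1$ is used is in re-establishing the $L^\infty$-estimate of Proposition~\ref{L-infty estimate} for the line bundle $\mathcal{O}(ND)$. That proposition was proved under the hypothesis $h_0=\pi^*h_F$, which makes $\Lambda_{\omega_i}F_{h_0}$ uniformly bounded; for a fixed smooth metric $h_0$ on $\mathcal{O}(ND)$ this fails, since the curvature of $h_0$ is not compatible with the degeneration of $T$ and, by Lemma~\ref{positive current nef big}$(1)$, only $|\Lambda_{\omega_i}F_{h_0}|\le C|s_D|^{-2m}$ holds near $D$. To remedy this I would choose $h_0$ adapted to $D$, e.g. $h_0=(h_D')^{\otimes N}$ for a smooth metric $h_D'$ on $\mathcal{O}(D)$, replace $\Phi_i$ and $\Psi_i$ by suitable twists by powers of $s_D$, and rerun the Moser iteration of Proposition~\ref{L-infty estimate}, feeding in the uniform Sobolev inequality of Lemma~\ref{sob ineq}. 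The bound $N\ge 3m+1$ is precisely what supplies enough powers of $s_D$ to absorb, on one hand, the $|s_D|^{-2m}$ blow-up of $\Lambda_{\omega_i}F_{h_0}$ in the analogue of inequality~(\ref{L-infty estimate laplacian}) and, on the other hand, the partition-of-unity error terms coming from the local-to-global passage $\nabla_i\to\nabla_\infty$, while still leaving a genuine holomorphic section of $\mathcal{O}(ND)$ to iterate with. The outcome is uniform bounds $\|\Phi_i\|_{L^\infty(X)},\|\Psi_i\|_{L^\infty(X)}\le C$ after the usual normalization, together with the twisted $L^2$-gradient bounds of Proposition~\ref{uniform esti of diff}; passing to the limit as in Proposition~\ref{nontrivial limit} then yields nonzero bounded holomorphic morphisms $\Phi_\infty\colon\mathcal{O}(ND)_\infty\to\mathcal{O}(ND)|_{X\setminus Z}$ and $\Psi_\infty\colon\mathcal{O}(ND)|_{X\setminus Z}\to\mathcal{O}(ND)_\infty$.

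Granting this, the conclusion is short. The composition $\Phi_\infty\circ\Psi_\infty$ is a holomorphic section of $\Hom(\mathcal{O}(ND),\mathcal{O}(ND))=\mathcal{O}_X$ over $X\setminus Z$, and it is bounded because $\Phi_\infty$ and $\Psi_\infty$ are; hence it extends over the codimension-$\ge 2$ part $Z\setminus D$, and, being a bounded holomorphic function, across the divisor $D$ by the Riemann removable-singularity theorem, so it equals a constant $c$ on $X$. Since $\Phi_\infty$ and $\Psi_\infty$ are nonzero holomorphic sections of line bundles over the connected manifold $X\setminus Z$, their product is not identically zero, so $c\neq 0$; therefore $\Psi_\infty$ is nowhere vanishing on $X\setminus Z$, and the identity $\Phi_\infty\circ\Psi_\infty=c\cdot\Id$ (which persists over $Z\setminus D$) forces $\Psi_\infty\colon\mathcal{O}(ND)|_{X\setminus D}\to\mathcal{O}(ND)_\infty$ to be an isomorphism. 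Equivalently, one may argue directly by the maximum principle: $\log|\Psi_\infty|^2$ is bounded above by the $L^\infty$-estimate, while the Bochner inequality $\Delta_T\log|\Psi_\infty|^2\ge -C|s_D|^{-2m}$ (the analogue of (\ref{L-infty estimate laplacian}), using $\lambda_\infty=0$, which follows from $c_1(\mathcal{O}(ND))\cdot\alpha^{n-1}=N\,[D]\cdot\alpha^{n-1}=0$ by Proposition~\ref{nonkahler null}) together with the twisted gradient bound gives a matching lower bound, again with $N\ge 3m+1$ controlling the contribution near $D$.

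The main obstacle is the $L^\infty$-estimate of the second paragraph: unlike in Proposition~\ref{L-infty estimate}, one cannot rely on $h_0$ being a pullback and must instead exploit the explicit divisor structure of $\mathcal{O}(ND)$—the freedom to twist by $s_D$—while carefully tracking how many powers of $s_D$ are consumed by the degeneration $\Lambda_{\omega_i}(\cdot)\lesssim|s_D|^{-2m}$, by the gradient estimates of Proposition~\ref{uniform esti of diff}, and by the error terms in the local-to-global passage. That bookkeeping is what pins the threshold at $N\ge 3m+1$; once the uniform $L^\infty$ bound is secured, everything else is formal.
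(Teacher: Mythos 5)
Your overall architecture (run the Uhlenbeck/limit machinery for $E=\mathcal{O}(ND)$, get bounded nonzero $\Phi_\infty,\Psi_\infty$, then rule out zeros) matches the paper, but you have misplaced where the hypothesis $N\ge 3m+1$ actually does its work. In the paper's proof the $L^\infty$ and gradient estimates are simply taken as given from Assumption \ref{assumption4} (via Corollary \ref{L-infty estimate of limit object} and Proposition \ref{uniform esti of diff}); the threshold $N\ge 3m+1$ enters only in the integration-by-parts step: one considers the auxiliary holomorphic section $\Psi_\infty\circ s_D^N$ of $\Hom(\mathcal{O}_X,\mathcal{O}(ND)_\infty)$, notes $\mu_\alpha(\mathcal{O}(ND))=0$ by Proposition \ref{nonkahler null} so that the Weitzenb\"ock identity reads $\Delta_T|\Psi_\infty\circ s_D^N|^2=|\nabla(\Psi_\infty\circ s_D^N)|_T^2$, and then kills $\int\eta_\varepsilon\Delta_T|\Psi_\infty\circ s_D^N|^2\,T^n$ with cut-offs. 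Since $|\nabla\eta_\varepsilon|_T\le |s_D|^{-2m}|\nabla\eta_\varepsilon|_{\omega_0}$ and only the twisted bounds $|s_D^m\Psi_\infty|\le C$, $\int|s_D^{2m}\nabla_\infty\Psi_\infty|_T^2T^n<\infty$ are used, the surviving exponents are $2N-5m$ and $2N-4m-1$, and $N\ge 3m+1$ is exactly what makes these boundary terms vanish as $\varepsilon\to0$. The conclusion is that $\Psi_\infty\circ s_D^N$ is parallel, hence of constant rank, hence nowhere zero. Your claim that ``the only place where $N\ge 3m+1$ is used is in re-establishing the $L^\infty$-estimate'' is therefore not how the paper proceeds, and your proposed re-derivation of Proposition \ref{L-infty estimate} for $\mathcal{O}(ND)$ by twisting the Moser iteration is not substantiated: twisting by $s_D^k$ replaces $\Lambda_{\omega_i}F_{h_0}$ by $\Lambda_{\omega_i}F_{h_0}+k\Lambda_{\omega_i}F_{h_D'}$, which is still only $O(|s_D|^{-2m})$, so the uniform differential inequality $\Delta_{\omega_i}u_i\ge -N$ needed to start the iteration is not restored. (You have, to be fair, put your finger on a real soft spot of the paper: the proof of Proposition \ref{L-infty estimate} does use $h_0=\pi^*h_F$, and $\mathcal{O}(ND)$ is not a pullback; the paper hides this inside Assumption \ref{assumption4} rather than resolving it.)

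Your endgame --- $\Phi_\infty\circ\Psi_\infty$ is a bounded holomorphic function on $X\setminus Z$, hence a nonzero constant by Riemann extension and Liouville --- is genuinely different from the paper's and would be a much shorter proof \emph{if} the untwisted sup bounds of Corollary \ref{L-infty estimate of limit object} were really available; note that it would then make the hypothesis $N\ge 3m+1$ entirely superfluous, which should have been a warning sign. The estimates the paper actually invokes in this and the subsequent proofs are the twisted ones $\|s_D^m\Psi_\infty\|_{L^\infty}<\infty$, under which your composition is only $O(|s_D|^{-2m})$ near $D$, Riemann extension across the divisor fails, and one is forced back to an argument that trades the pole along $D$ against the positive power $s_D^N$ and the numerical vanishing $[D]\cdot\alpha^{n-1}=0$ --- which is precisely the paper's Weitzenb\"ock-plus-cut-off argument. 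So the gap to flag is twofold: the missing justification of the uniform $L^\infty$ bound for the non-pullback bundle $\mathcal{O}(ND)$, and the reliance of your final step on an untwisted boundedness that the quantitative part of the paper does not actually deliver.
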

\begin{proof}
Let us consider a composition of sheaf morphisms
$$
\mathcal{O}_{X\setminus D}\xrightarrow{s_D^N} \mathcal{O}(ND)\xrightarrow{\Psi_{\infty}} \mathcal{O}(ND)_{\infty}.
$$
We denote by $h_0$ a reference hermitian metric on the underlying smooth line bundles $\underline{\mathcal{O}(ND)}$ and $\underline{\mathcal{O}(ND)_{\infty}}=\underline{\mathcal{O}(ND)}|_{X\setminus Z}$. We give the Euclidean metric $h_1$ on $\mathcal{O}_{X}$. Then $\Psi_{\infty}\circ s_D^N\in \Gamma(X\setminus Z, \Hom(\mathcal{O}_X,\mathcal{O}(ND)_{\infty}))$ is holomorphic with respect to $\nabla_{\infty}\otimes \nabla_{h_1}^*$. Hence, if we recall that $\mu_{\alpha}(\mathcal{O}(ND))=0$, we obtain the following equation on $X\setminus Z$ by the direct calculation:
\begin{equation}\label{KH corr on nK locus eq1}
\Delta_T|\Psi_{\infty}\circ s_D^N|^2=|(\nabla_{\infty}\otimes\nabla_{h_1}^*)(\Psi_{\infty}\circ s_D^N)|_T^2.
\end{equation}
By proposition \ref{uniform esti of diff}, the RHS is integrable on $X\setminus Z$. Let us show that the integral of the LHS equals to 0. We denote by $\nabla_D$ the Chern connection of a smooth hermitian metric $h_0$ on $\mathcal{O}(ND)$. We denote by $Z^{\varepsilon}$ an ${\varepsilon}$-neighborhood of $Z$.  Let $\eta_{\varepsilon}:X\to \mathbb{R}$ be a cut-off function such that $\Supp(\eta_{\varepsilon})\subset X\setminus Z^{\varepsilon/2}$, $\eta_{\varepsilon}\equiv 1$ on  $X\setminus Z^{\varepsilon}$ and $|\nabla\eta_{\varepsilon}|_{\omega_0}<1/\varepsilon$. We first compute as follows:
\begin{align*}
\left|\nabla|\Psi_{\infty}\circ s_D^N|^2\right|_T
&=2\left|\langle (\nabla_{\infty}\otimes\nabla_{h_1}^*)(\Psi_{\infty}\circ s_D^N), \Psi_{\infty}\circ s_D^N\rangle\right|_T\\
&=2\left|\langle \nabla_{\infty}\Psi_{\infty}\circ s_D^N+\Psi_{\infty}\circ(Ns_D^{N-1}\nabla_Ds_D), \Psi_{\infty}\circ s_D^N\rangle\right|_T\\
&\le 2|s_D^{2m}\nabla_{\infty}\Psi_{\infty}|_T|s_D^m\Psi_{\infty}||s_D^{2N-3m}|
+2N|s_D^m\Psi_{\infty}|^2|s_D^{2N-2m-1}||\nabla_Ds_D|\\
&\le C|s_D^{2m}\nabla_{\infty}\Psi_{\infty}|_T|s_D^{2N-3m}|+C|s_D^{2N-2m-1}|.
\end{align*}
In the fourth line, we used $\|s_D^m\Psi_{\infty}\|_{L^{\infty}(X\setminus Z)}<\infty$ (refer to Corollary \ref{L-infty estimate of limit object}).
Then, if we remark that $\eta_{\varepsilon}$ is compactly supported in $X\setminus Z$ and $T\ge |s_D|^{2m}\omega_0$, we can calculate as follows:
\begin{align}\label{KH corr on nK locus eq3}
\left|\int_{X\setminus Z}\eta_{\varepsilon}\Delta_T|\Psi_{\infty}\circ s_D^N|^2T^n\right|
&=\left|\int_{X\setminus Z}\langle \nabla\eta_{\varepsilon},\nabla|\Psi_{\infty}\circ s_D^N|^2\rangle_TT^n\right|\notag\\
&\le \int_{X\setminus Z} |\nabla\eta_{\varepsilon}|_T|\nabla|\Psi_{\infty}\circ s_D^N|^2|_TT^n\notag\\
&\le C\int_{X\setminus Z}(|\nabla\eta_{\varepsilon}|_{\omega_0}|s_D|^{-2m})(|s_D^{2m}\nabla_{\infty}\Psi_{\infty}|_T|s_D^{2N-3m}|+|s_D^{2N-2m-1}|)T^n\notag\\
&\le C \int_{X\setminus Z}|\nabla\eta_{\varepsilon}|_{\omega_0}|s_D^{2m}\nabla_{\infty}\Psi_{\infty}|_T|s_D^{2N-5m}|T^n + C\int_{X\setminus Z}|\nabla\eta_{\varepsilon}||s_D^{2N-4m-1}|T^n\notag\\
&\le C\left(\int_{Z^{\varepsilon}\setminus Z^{\varepsilon/2}}|\nabla\eta_{\varepsilon}|^2_{\omega_0}|s_D|^{2m+4}e^F\omega_0^n \right)^{1/2}\left(\int_{X\setminus Z}|s_D^{2m}\nabla_{\infty}\Psi_{\infty}|_T^2T^n \right)^{1/2}\notag \\
&\hspace{4mm}+C\int_{X\setminus Z}|\nabla\eta_{\varepsilon}|_{\omega_0}|s_D|^{2m+1}e^F\omega_0^n.
\end{align}
Since $|\nabla\eta_{\varepsilon}|^2_{\omega_0}|s_D|^{2m+4}e^F\le C$ around $D$ and $Z\setminus D=\Sigma$ has codimension 2, we obtain
$\lim_{\varepsilon\to 0}\int_X|\nabla\eta_{\varepsilon}|_{\omega_0}^2\omega_0^n=0$. Then, by Proposition \ref{uniform esti of diff}, we obtain that
\begin{equation}\label{KH corr on nK locus eq2}
\int_{X\setminus Z}\Delta_T|\Psi_{\infty}\circ s_D^N|^2T^n=\lim_{\varepsilon\to 0}\int_{X\setminus Z}\eta_{\varepsilon}\Delta_T|\Psi_{\infty}\circ s_D^N|^2T^n=0.
\end{equation}
By (\ref{KH corr on nK locus eq1}) and (\ref{KH corr on nK locus eq2}), we obtain $\int_{X\setminus Z}|(\nabla_{\infty}\otimes\nabla_{h_1}^*)\Psi_{\infty}\circ s_D^N|_T^2T^n=0$. In particular $(\nabla_{\infty}\otimes\nabla_{h_1}^*)(\Psi_{\infty}\circ s_D^N)=0$ on $X\setminus Z$.
Hence the rank of the image of $\Psi_{\infty}\circ s_D^N$ is constant on $X\setminus Z$. Since $\Psi_{\infty}\ne 0$, we obtain that $\Psi_{\infty}\circ s_D^N: \mathcal{O}_{X\setminus Z}\to \mathcal{O}(ND)_{\infty}$ is isomorphic on $X\setminus Z$.
 Since $s_D$ has no zero on $X\setminus Z$, we can conclude that $\Psi_{\infty}:\mathcal{O}(ND)\to \mathcal{O}(ND)_{\infty}$ is isomorphic on $X\setminus Z$. Now we recall that $\Sigma\subset X\setminus D$ is an analytic subset of codimension at least 2. Thus $\Psi_{\infty}$ is isomorphic on $X\setminus D$. 
\end{proof}
We obtain the existence of a $T$-admissible HYM connection in rank 1 case.
\begin{theo}\label{KH corr for rank1}
We work under Assumption \ref{assumption4}.
Let $X$ be a compact K\"{a}hler manifold and $\alpha$ be a nef and big class on $X$ such that $D=E_{nK}(\alpha)$ is a snc divisor. Let $T$ be a closed positive $(1,1)$-current in $\alpha$ which satisfies Assumption \ref{assumption3}. Then, for any holomorphic line bundle $E$ on $X$, the sheaf morphism $\Psi_{\infty}:E|_{X\setminus D}\to E_{\infty}$ is isomorphic. 
\end{theo}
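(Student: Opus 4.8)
The plan is to reduce the assertion to the fact that a bounded holomorphic function extends holomorphically across the snc divisor $D$, so that the whole argument collapses onto a single bounded holomorphic function on $X\setminus D$; this is in the spirit of Proposition \ref{KH corr on nK locus}, but simpler. First I would record that, since $E$ has rank one, the reflexive extension of $E_{\infty}$ over $X\setminus D$ provided by the Lemma following Corollary \ref{Uhlenbeck compactness} is again a line bundle (a rank-one reflexive sheaf on the smooth manifold $X\setminus D$ is invertible). Hence both $\Phi_{\infty}:E_{\infty}\to E|_{X\setminus D}$ and $\Psi_{\infty}:E|_{X\setminus D}\to E_{\infty}$ of Corollary \ref{L-infty estimate of limit object} are nonzero morphisms of line bundles over $X\setminus D$, and the composition $\sigma:=\Psi_{\infty}\circ\Phi_{\infty}$ is a global section of $\End(E_{\infty})\cong\mathcal{O}_{X\setminus D}$, that is, a holomorphic function on $X\setminus D$.

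Next I would show that $\sigma$ is a nonzero constant. By Corollary \ref{L-infty estimate of limit object} we have $\sup_{X\setminus Z}|\Phi_{\infty}|<\infty$ and $\sup_{X\setminus Z}|\Psi_{\infty}|<\infty$ with respect to the fixed smooth hermitian metrics, so $|\sigma|\le |\Psi_{\infty}|\,|\Phi_{\infty}|$ is bounded on $X\setminus Z$, hence on the dense subset $X\setminus D$. Riemann's removable singularity theorem then extends the bounded holomorphic function $\sigma$ across the snc divisor $D$ to a holomorphic function on the compact manifold $X$, which by the maximum principle is a constant $c$. Moreover $c\neq 0$: each of $\Phi_{\infty},\Psi_{\infty}$, being a nonzero morphism of line bundles, vanishes only on a proper divisor of $X\setminus Z$, so $\sigma$ vanishes only on the union of those two divisors and is not identically zero.

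Finally, from $\Psi_{\infty}\circ\Phi_{\infty}=c\,\Id_{E_{\infty}}$ with $c\neq 0$ I would conclude that $\Psi_{\infty}$ is surjective — hence, being a morphism between line bundles, an isomorphism — at every point of $X\setminus Z$. Consequently the zero divisor of the section $\Psi_{\infty}$ of the line bundle $\Hom(E|_{X\setminus D},E_{\infty})$ is contained in the analytic set $\Sigma$ with $\codim\Sigma\ge 2$, which forces it to be empty; therefore $\Psi_{\infty}:E|_{X\setminus D}\to E_{\infty}$ is an isomorphism, and $h_{\infty}:=\Psi_{\infty}^{*}h_0$ is the desired $T$-admissible HYM metric by Proposition \ref{adHYM}.

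The main obstacle is the extension step across the codimension-one locus $D$. In \cite{Chen} the locus on which $E_{\infty}$ is undefined has codimension $\ge 2$, so Siu's Hartogs-type extension \cite{Siu75} applies to $E_{\infty}$ itself; here $D$ is a divisor and no such extension of $E_{\infty}$ is available, and the argument works only because the particular function $\sigma$ is globally bounded — precisely the content of the uniform $L^{\infty}$-estimates in Corollary \ref{L-infty estimate of limit object} (ultimately Proposition \ref{L-infty estimate}, valid under Assumption \ref{assumption4}). A secondary point requiring care is that near $D$ one must compare $E_{\infty}$ and $E|_{X\setminus D}$ as genuine line bundles, which uses the smoothness of $X\setminus D$ and the rank-one reflexivity of the extension of $E_{\infty}$ there.
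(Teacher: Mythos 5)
Your argument is correct, and it reaches the conclusion by a genuinely different and more elementary route than the paper. The paper also reduces matters to showing that the holomorphic function $\Psi_{\infty}(\Phi_{\infty})$ on $X\setminus Z$ has no zeros, but it does so analytically: it twists by $s_D^{2N}$ and by the isomorphism $\Psi_{D,\infty}$ of Proposition \ref{KH corr on nK locus} to produce a section $s$ of $\mathcal{O}(2ND)_{\infty}$, derives $\Delta_T|s|^2=|\nabla_{D,\infty}s|_T^2$ from $\mu_{\alpha}(\mathcal{O}(2ND))=0$, and kills the right-hand side by integration by parts against cut-offs, using the first-derivative bounds of Proposition \ref{uniform esti of diff} and the lower bound $T\ge|s_D|^{2m}\omega_0$; parallelism of $s$ then gives nonvanishing. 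You instead observe that in rank one the composition $\Psi_{\infty}\circ\Phi_{\infty}\in H^0(X\setminus Z,\End(E_{\infty}))$ is a single bounded holomorphic function (boundedness being exactly Corollary \ref{L-infty estimate of limit object}), extend it first across $\Sigma$ by Hartogs (codimension $\ge 2$, no bound needed) and then across the divisor $D$ by the Riemann removable singularity theorem, and conclude it is a nonzero constant because its zero set --- the union of the zero divisors of the two nonzero line-bundle morphisms --- is a proper analytic subset of the connected set $X\setminus Z$. This is sound and bypasses Proposition \ref{KH corr on nK locus} and the derivative estimates of Proposition \ref{uniform esti of diff} entirely in the rank-one case; what the paper's heavier route buys is a template that survives in the higher-rank Theorem \ref{KH corr nef and big}, where the composition is no longer a scalar (two nonzero maps can compose to zero) and the twisted Weitzenb\"{o}ck/cut-off machinery, together with the isomorphism $\Psi_{D,\infty}$ itself, is genuinely used before reducing to determinants. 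Two points worth making explicit in your write-up: the sup bound on $X\setminus Z$ passes to the extension over $X\setminus D$ by continuity and density, and the final emptiness of the zero divisor of $\Psi_{\infty}$ over $X\setminus D$ already follows from $\Psi_{\infty}\circ\Phi_{\infty}\equiv c\neq 0$ on the extension (a divisor contained in the codimension-two set $\Sigma$ is empty, as you say).
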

\begin{proof}
Fix a natural number $N\ge 3m+1$ and denote by $\Psi_{D,\infty}:\mathcal{O}(2ND)|_{X\setminus D}\to\mathcal{O}(2ND)_{\infty}$ the isomorphism in Proposition \ref{KH corr on nK locus}. Since $\Psi_{\infty}$ is a holomorphic nonzero section of $E^*\otimes E_{\infty}$ and $\Phi_{\infty}$ is a holomorphic nonzero section of $E\otimes E_{\infty}^*$, we naturally obtain a holomorphic function $\Psi_{\infty}(\Phi_{\infty})$ on $X\setminus D$. If we consider a sheaf morphism 
$$
\mathcal{O}_{X}\xrightarrow{s_D^{2N}} \mathcal{O}(2ND)\xrightarrow{\Psi_{D,\infty}}\mathcal{O}(2ND)_{\infty},
$$
we obtain a holomorphic nonzero global section $s:=\Psi_{D,\infty}\left(s_D^{2N}(\Psi_{\infty}(\Phi_{\infty}))\right)\in H^0(X\setminus D,\mathcal{O}(2ND)_{\infty})$. 
We give a connection on each line bundle by 
\begin{itemize}
\item the Chern connection $\nabla_0$ of $h_0$ on $E$,
\item the $T$-admissible HYM $h_0$-connection $\nabla_{\infty}$ on $E_{\infty}$,
\item the Chern connection $\nabla_{D}$ of a smooth hermitian metric $h_D^{2N}$ on $\mathcal{O}(2ND)$ and 
\item the $T$-admissible HYM $h_D^{2N}$-connection $\nabla_{D,\infty}$ on $\mathcal{O}(2ND)_{\infty}$.
\end{itemize}
For simplicity, we denote $\nabla_0\otimes\nabla_{\infty}^*$ on $E\otimes E_{\infty}^*$ and $\nabla_0^*\otimes\nabla_{\infty}$ on $E^*\otimes E_{\infty}$ by the same notation $\nabla_{\infty}$. We also denote the connection $\nabla_D^*\otimes\nabla_{D,\infty}$ on $\mathcal{O}(2ND)^*\otimes\mathcal{O}(2ND)_{\infty}$ by $\nabla_{D,\infty}$. Then, as the previous Proposition \ref{KH corr on nK locus}, we have
$$
\Delta_T|s|^2=|\nabla_{D,\infty}s|_T^2-\mu_{\alpha}(\mathcal{O}(2ND))|s|^2=|\nabla_{D,\infty}s|_T^2.
$$
If we remark that $|s_D^m\Psi_{D,\infty}|$, $|s_D^m\Psi_{\infty}|$ and $|s_D^m\Phi_{\infty}|$ are contained in $L^{\infty}(X\setminus Z)$ (refer to Corollary \ref{L-infty estimate of limit object}), the following calculation works at each point on $X\setminus Z$:
\begin{align*}
|\nabla_{D,\infty}s|_T
&\le |\nabla_{D,\infty}\Psi_{D,\infty}||s_D^{2N}||\Psi_{\infty}||\Phi_{\infty}|
+|\Psi_{D,\infty}||Ns_D^{2N-1}\nabla_Ds_D|_T|\Psi_{\infty}||\Phi_{\infty}|\\
&\hspace{4mm} +|\Psi_{D,\infty}||s_D|^{2N}(|\nabla_{\infty}\Psi_{\infty}|+|\nabla_{\infty}\Phi_{\infty}|)\\
&\le |s_D^{2m}\nabla_{D,\infty}\Psi_{D,\infty}||s_D^{m}\Psi_{\infty}||s_D^m\Phi_{\infty}||s_D^{2N-4m}|+N|s_D^m\Phi_{D,\infty}||\nabla_Ds_D|_{\omega_0}|s_D^m\Psi_{\infty}||s_D^m\Phi_{\infty}||s_D^{2N-1-5m}|\\
&\hspace{4mm} +|s_D^{m}\Psi_{D,\infty}|(|s_D^{2m}\nabla_{\infty}\Psi_{\infty}|_T+|s_D^{2m}\nabla_{\infty}\Phi_{\infty}|_T)|s_D^{2N-3m}|\\
&\le |s_D|^{2N-6m}\left(C|s_D^{2m}\nabla_{D,\infty}\Psi_{D,\infty}|_T+C+C|s_D^{2m}\nabla_{\infty}\Psi_{\infty}|_T+C|s_D^{2m}\nabla_{\infty}\Phi_{\infty}|_T\right).
\end{align*}
Since each term in RHS is $L^2$-integrable with respect to $T^n$ by Proposition \ref{uniform esti of diff}, we can see that $|\nabla_{D,\infty}s|_T$ is $L^2$-integrable with respect to $T^n$. Then, the same calculation with (\ref{KH corr on nK locus eq3}) shows 
$$
0=\int_{X\setminus Z}\Delta_T|s|^2T^n=\int_{X\setminus Z}|\nabla_{D,\infty}s|_TT^n.
$$
In particular, the rank of the image of $\cdot s: \mathcal{O}_{X\setminus Z}\to \mathcal{O}(ND)_{\infty}$ is constant. Then, since $s_D$ and $\Psi_{D,\infty}$ has no zero point on $X\setminus D$ by Proposition \ref{KH corr on nK locus} and the rank of each vector bundle equals to 1, we obtain that $\Psi_{\infty}(\Phi_{\infty})$ has no zero point on $X\setminus Z$. As a consequence, the sheaf morphism $\Psi_{\infty}:E|_{X\setminus D}\to E_{\infty}$ is isomorphic.
\end{proof}
Then we can prove in general rank.
\begin{theo}\label{KH corr nef and big} 
We work under Assumption \ref{assumption4}.
Let $X$ be a compact K\"{a}hler manifold and $\alpha$ be a nef and big class on $X$ such that $D=E_{nK}(\alpha)$ is a snc divisor. Let $T$ be a closed positive $(1,1)$-current in $\alpha$ which satisfies Assumption \ref{assumption3}. If $E$ is an $\alpha^{n-1}$-slope stable holomorphic vector bundle on $X$, then $\Psi_{\infty}:E|_{X\setminus D}\to E_{\infty}$ is an isomorphism on $X\setminus D$. 
\end{theo}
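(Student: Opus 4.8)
The plan is to deduce the general-rank statement from the rank-one result Theorem \ref{KH corr for rank1}, arguing by contradiction through the kernel of $\Psi_\infty$.

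First I would pin down $\det\Psi_\infty$. Since the $\omega_i$-HYM metrics $h_i=h_0H_i$ are normalized by $\det H_i=1$, the induced metrics on $\det E$ are fixed and the induced connections $\det\nabla_i$ are $\omega_i$-HYM; their Uhlenbeck limit is $\det\nabla_\infty$, so that $(\det E)_\infty$ coincides with $\det E_\infty$ off $Z$, and Theorem \ref{KH corr for rank1} applied to the line bundle $\det E$ produces an isomorphism $\det E|_{X\setminus D}\xrightarrow{\sim}\det E_\infty$. Through it, $\det\Psi_\infty$ becomes a holomorphic function $g$ on $X\setminus D$; by Corollary \ref{L-infty estimate of limit object}, applied to $E$ and to $\det E$ (boundedness of $\Psi_\infty,\Phi_\infty$, hence of $\det\Psi_\infty$, together with boundedness of $\Psi_\infty^{\det E}$ and of its inverse $\simeq \Phi_\infty^{\det E}$), this $g$ is bounded on $X\setminus D$. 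By Riemann's removable singularity theorem $g$ extends holomorphically across $D$, and since $X$ is compact $g$ is a constant. Hence either $g\ne 0$, in which case $\det\Psi_\infty$ is nowhere zero on $X\setminus D$ and $\Psi_\infty$ is already the desired isomorphism, or $g\equiv 0$, i.e.\ $\det\Psi_\infty\equiv 0$. The same input also gives $\mu_\alpha(E_\infty)=\mu_\alpha(E)$ (equivalently, identifies the $T$-HYM constant $\lambda_\infty$), because $\det E_\infty\cong\det E$ on $X\setminus D$ and $\alpha^{n-1}\cdot[D]=0$ by Proposition \ref{nonkahler null}.

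It then remains to exclude $\det\Psi_\infty\equiv 0$. In that case $\mathcal K:=\ker\Psi_\infty\subset E|_{X\setminus Z}$ is a nonzero saturated subsheaf of some rank $k$ with $1\le k\le\rk E-1$ (saturated since $E|_{X\setminus Z}/\mathcal K$ embeds into the bundle $E_\infty$ via $\Psi_\infty$). I would extend $\mathcal K$ to a saturated subsheaf $\widehat{\mathcal K}\subsetneq E$ over all of $X$: across $\Sigma=Z\setminus D$, of codimension $\ge 2$, by taking the reflexive hull, and across $D$, of codimension one, by the standard existence of a coherent subsheaf of $E$ on $X$ restricting to this extension off $D$, followed by saturation. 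The crucial point is that $c_1(\widehat{\mathcal K})$ depends on the extension across $D$ only up to a multiple of $[D]$, so by Proposition \ref{nonkahler null} and the nefness of $\alpha$ the slope $\mu_\alpha(\widehat{\mathcal K})$ is unambiguous and equals the $\alpha$-slope of $\mathcal K$; stability of $E$ then forces $\mu_\alpha(\widehat{\mathcal K})<\mu_\alpha(E)$, hence $\mu_\alpha(E/\widehat{\mathcal K})>\mu_\alpha(E)$ from the defining exact sequence. On the other hand $\Psi_\infty$ factors through an injection of $(E/\widehat{\mathcal K})|_{X\setminus Z}$ into $E_\infty$, and $E_\infty$ carries the $T$-admissible HYM connection $\nabla_\infty$; running the Chern--Weil computation for a saturated subsheaf of $E_\infty$ as in the proofs of Proposition \ref{KH corr on nK locus} and Theorem \ref{KH corr for rank1} --- the weighted $L^2$-estimates of Proposition \ref{uniform esti of diff} and the $L^\infty$-bounds of Corollary \ref{L-infty estimate of limit object} being used to kill the boundary contributions along $Z$, in particular along $D$, and $\langle T^{n-1}\rangle=\alpha^{n-1}$ together with $\codim\Sigma\ge 2$ to identify the Chern--Weil degree with the cohomological one --- yields $\mu_\alpha(E/\widehat{\mathcal K})\le\mu_\alpha(E_\infty)=\mu_\alpha(E)$. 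Combining, $\mu_\alpha(E)<\mu_\alpha(E/\widehat{\mathcal K})\le\mu_\alpha(E)$, a contradiction; so $\det\Psi_\infty\not\equiv 0$ and $\Psi_\infty$ is an isomorphism on $X\setminus D$.

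The hard part, as flagged in the introduction, is everything happening across the codimension-one divisor $D$, where Siu-type Hartogs extension of $E_\infty$ as in \cite{Chen} is not available: one must build the coherent extension $\widehat{\mathcal K}$ of the kernel with first Chern class controlled modulo $[D]$, and, in the Chern--Weil argument for the admissible HYM bundle $E_\infty$, one must verify directly --- via the weighted estimates of Proposition \ref{uniform esti of diff} and an integration-by-parts argument in the spirit of Proposition \ref{KH corr on nK locus} --- that no spurious boundary term survives along $D$; the vanishing $\alpha^{n-1}\cdot[D]=0$ and the nefness of $\alpha$ are exactly what make the resulting slope bookkeeping consistent.
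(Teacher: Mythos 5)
Your overall architecture is the right one and matches the paper's in outline (contradiction via the kernel/image of $\Psi_{\infty}$, a coherent extension across $D$, and a slope comparison), but the two steps you yourself flag as ``the hard part'' are exactly where the proof lives, and in both places you assert rather than supply the mechanism. First, the coherent extension of $\mathcal{K}=\ker\Psi_{\infty}$ across the codimension-one set $D$ is \emph{not} standard: a saturated subsheaf of $E|_{X\setminus D}$ corresponds to an analytic subvariety of a Grassmann bundle over $X\setminus D$, and without an a priori volume (degree) bound there is no Bishop/Thullen-type extension across a divisor. The paper's actual device is to use the global $L^{\infty}$ bound on $\Psi_{\infty}$ (Corollary \ref{L-infty estimate of limit object}) to define an $\mathcal{O}_X$-linear morphism $\psi=s_D^N\Psi_{\infty}:E\to j_*E_{\infty}$ on all of $X$, take $\widetilde{\mathcal{G}}={\rm{Im}}(\psi)$, and prove coherence by dualizing into $E^*$ and invoking Oka's coherence theorem; the kernel $\mathcal{F}=\ker(E\to\widetilde{\mathcal{G}})$ is then automatically the coherent extension you want. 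Your remark that $c_1$ of the extension is well defined modulo $[D]$ (killed by Proposition \ref{nonkahler null}) is correct, but it presupposes the extension exists.

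Second, your closing inequality $\mu_{\alpha}(E/\widehat{\mathcal{K}})\le\mu_{\alpha}(E_{\infty})$ is obtained by ``running the Chern--Weil computation for a saturated subsheaf of $E_{\infty}$.'' That Bando--Siu-type slope inequality is not available here: it would require identifying the analytic degree of the subsheaf with its cohomological degree and bounding $\int|\delbar\pi|_T^2T^n$ for the orthogonal projection $\pi$ onto the image of $E/\widehat{\mathcal{K}}$ in $E_{\infty}\otimes\mathcal{O}(ND)$ near $D$, where $T$ degenerates; the estimates of Proposition \ref{uniform esti of diff} and Corollary \ref{L-infty estimate of limit object} control $\Psi_{\infty}$ and $\Phi_{\infty}$, not projections onto arbitrary subsheaves. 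The paper sidesteps this entirely: it forms the holomorphic section $s'=\Psi_{D,\infty}\circ(s_D^{2m}s)\circ\Phi_{\mathcal{G},\infty}$ of a tensor product of $T$-HYM line/vector bundles built from the rank-one results, shows $|s'|\to 0$ along $D$ after raising $N$, computes $\Delta_T\log(|s'|+1)\ge -q(\mu_{\alpha}(E)-\mu_{\alpha}(\mathcal{G}))>0$ using only the stability inequality for the quotient $\mathcal{G}$ and the already-proved weighted bounds, and concludes by the weak maximum principle that $s'\equiv 0$, a contradiction. Your opening reduction via $\det\Psi_{\infty}$ is fine in spirit (the paper uses the same determinant trick, but only after full rank is established), so the proposal is a correct skeleton with two genuine gaps precisely at the steps the theorem is about.
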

\begin{proof}
Let us consider a sheaf morphism $\Psi_{\infty}:E|_{X\setminus D}\to E_{\infty}$. We firstly prove that the rank of the image of $\Psi_{\infty}$ coincides with $r=\rk E_{\infty}$. Suppose $\rk(\Psi_{\infty})<r$. Then we obtain the following exact sequence of coherent sheaves on $X\setminus D$:
$$
0\to \mathcal{F}\to E \xrightarrow{s_D^N\Psi_{\infty}} \mathcal{G} \to 0
$$
where $\mathcal{G}\subset E_{\infty}\otimes \mathcal{O}(ND)$ is the image sheaf of $\rk \mathcal{G}=q<r$.
\begin{claim}
Above $\mathcal{F}$ and $\mathcal{G}$ extend to coherent sheaves on $X$.
\end{claim}
\begin{proof}
Let $j: X\setminus D\hookrightarrow X$ be the natural inclusion. Then $s_D^N\Psi_{\infty}: E|_{X\setminus D}\to E_{\infty}$ naturally defines an $\mathcal{O}_X$-linear morphism $\psi: E\to j_*E_{\infty}$ by $\psi(e)=s_D^N\Psi_{\infty}(e|_{X\setminus D})$ for any local section $e$ of $E$. Let us define $\widetilde{\mathcal{G}}:={\rm{Im}}(\psi)$. Then $\widetilde{\mathcal{G}}$ is a finitely generated $\mathcal{O}_X$-module. We have a surjection $\psi: E\to \widetilde{\mathcal{G}}$. Hence its dual gives an injection $\psi^*:\widetilde{\mathcal{G}}^*\hookrightarrow E^*$. Hence $\widetilde{\mathcal{G}}^*\simeq {\rm{Im}}(\psi^*)$ is a finitely generated $\mathcal{O}_X$-submodule of a locally free sheaf $E$. Then, by Oka's coherence theorem (c.f.\cite{Kob} Lemma 5.3), $\widetilde{\mathcal{G}}^*$ is a coherent sheaf on $X$. We remark that $\widetilde{\mathcal{G}}$ is a torsion free $\mathcal{O}_X$-module. In fact, $\widetilde{\mathcal{G}}$ is $j_*\mathcal{O}_{X\setminus D}$-torsion free and there is a natural inclusion $\mathcal{O}_X\hookrightarrow j_*\mathcal{O}_{X\setminus D}$. Hence we have the natural inclusion $\widetilde{\mathcal{G}}\hookrightarrow\widetilde{\mathcal{G}}^{**}$. Since $\widetilde{\mathcal{G}}^{**}$ is a coherent reflexive sheaf on $X$, we have a local injection $\widetilde{\mathcal{G}}^{**}\hookrightarrow \mathcal{O}^{\oplus M}$. Therefore $\widetilde{\mathcal{G}}$ is locally a finitely generated $\mathcal{O}_X$-submodule of $\mathcal{O}^{\oplus M}$ and thus $\widetilde{\mathcal{G}}$ is a coherent sheaf on $X$ again by Oka's theorem.
\end{proof}
By the $\alpha^{n-1}$-stability of $E$, we have $\mu_{\alpha}(E)<\mu_{\alpha}(\mathcal{G})$.
We consider the following diagram:

\begin{center}
\begin{tikzpicture}[auto]
\node (e) at (0,2) {$\bigwedge^qE|_{X\setminus D}$}; \node (einfty) at (4,2) {$\bigwedge^qE_{\infty}\otimes \mathcal{O}(ND)$}; \node (g) at (2,0) {$\bigwedge^q\mathcal{G}/\Tor|_{X\setminus D}$};
\draw[->>] (e) to node[swap] {$f$} (g); \draw[->] (e) to node {$s_D^N\Psi_{\infty}$} (einfty); 
\draw[->] (g) to node[swap] {$s$} (einfty); 
\end{tikzpicture}
\end{center}

Via the inclusion $\bigwedge^q\mathcal{G}/\Tor\hookrightarrow \left(\bigwedge^q\mathcal{G}/\Tor\right)^{**}=\det\mathcal{G}$, we restrict  a smooth hermitian metric $h_{\mathcal{G},0}$ on $\det\mathcal{G}$ to $\bigwedge^q\mathcal{G}/\Tor$. 
\begin{claim}\label{bdd of s}
$|s|_{h_{\mathcal{G},0}^*\otimes h_{0}\otimes h_{D,0}^N}$ is globally bounded on the locally free locus of $\bigwedge^q\mathcal{G}/\Tor$. 
\end{claim}
\begin{proof}[proof of claim]
Let $U$ be an open set in $X$ and $e_1,\ldots,e_k\in \Gamma(U,\bigwedge^qE)$ be a local holomorphic frame. Then, for any $p\in U\setminus \Sing(\mathcal{G})$, there exists $i$ such that $f(e_i)$ gives a local frame of $\bigwedge^q\mathcal{G}/\Tor$ around $p$ since $f: \bigwedge^qE\to \bigwedge^q\mathcal{G}/\Tor$ is surjective. 
On $X\setminus \Sing(\mathcal{G})$, the morphism $s:\bigwedge^q\mathcal{G}/\Tor|_{X\setminus D}\to\bigwedge^qE_{\infty}\otimes\mathcal{O}(ND)_{\infty}$ is the natural inclusion given by $\mathcal{G}\subset  E_{\infty}\otimes\mathcal{O}(ND)_{\infty}$. Hence $f(e_i)=s_D^N\Psi_{\infty}(e_i)$ on $X\setminus (D\cup\Sing(\mathcal{G}))$.
We remark that $(f(e_i))^{-1}$ is a local frame of $\left(\bigwedge^q\mathcal{G}/\Tor\right)^*$.
Let us locally denote by $s=\tilde{s}\cdot(f(e_i))^{-1}$. Obviously we have $\tilde{s}=s(f(e_i))=s_D^N\Psi_{\infty}(e_i)$. Then, since hermitian metrics $h_{\mathcal{G},0}$, $h_0$ and $h_{D,0}$ are smooth and bounded on $X$, we have the following estimate on $X\setminus (D\cup\Sing(\mathcal{G}))$:
$$
|s|_{h_{\mathcal{G},0}^*\otimes h_{0}\otimes h_{D,0}^N}=|\tilde{s}\cdot f(e_i)^{-1}|_{h_{\mathcal{G},0}^*\otimes h_{0}\otimes h_{D,0}^N}\le C|s_D^N\Psi_{\infty}(e_i)\cdot (s_D^N\Psi_{\infty}(e_i))^{-1}|\le C.
$$ 
\end{proof}
Let us consider $\Psi_{D, \infty}:\mathcal{O}((N+2m)D)\to \mathcal{O}((N+2m)D)_{\infty}$ and $\Phi_{\mathcal{G},\infty}:(\det\mathcal{G})_{\infty}\to \det\mathcal{G}$ on $X\setminus D$. These are isomorphic by Theorem \ref{KH corr for rank1}. By Corollary \ref{L-infty estimate of limit object}, we also have $s_D^m\Phi_{\mathcal{G},\infty}$ and $s_D^m\Psi_{D,\infty}$ are uniformly bounded on $X\setminus D$. Let us consider an isomorphism $\Id_{\bigwedge^qE_{\infty}}\otimes(\Psi_{D,\infty}\circ s_D^{2m})\otimes\Phi_{\mathcal{G}_{\infty}}^*:\bigwedge^qE_{\infty}\otimes\mathcal{O}(ND)\otimes(\det\mathcal{G})^*\to \bigwedge^qE_{\infty}\otimes\mathcal{O}((N+2m)D)_{\infty}\otimes(\det\mathcal{G})_{\infty}^*$. Then the morphism $s\in H^0(X\setminus D, \bigwedge^qE_{\infty}\otimes\mathcal{O}(ND)\otimes(\det\mathcal{G})^*)$ in Claim \ref{bdd of s} defines $s':=\Psi_{D,\infty}\circ(s_D^{2m}s)\circ\Phi_{\mathcal{G},\infty}\in H^0(X\setminus D,\bigwedge^qE_{\infty}\otimes\mathcal{O}((N+2m)D)_{\infty}\otimes(\det\mathcal{G})_{\infty}^*)$.
Then $|s_D^{2m}s|_{h_0\otimes h_{D,\infty}\otimes h_{\mathcal{G},\infty}}$ is uniformly bounded. In fact, we can see the boundedness by
$$
|s'|_{h_0\otimes h_{D,0}\otimes h_{\mathcal{G},0}^*}
\le |s_D^m\Psi_{D,\infty}|_{h_{D,0}}|s_D^m\Phi_{\mathcal{G},\infty}|_{h_{D,0}\otimes h_{\mathcal{G},0}}|s|_{h_{\mathcal{G},0}^*\otimes h_0\otimes h_{D,0}^N}
$$
and Claim \ref{bdd of s} and Corollary \ref{L-infty estimate of limit object}. Therefore, by choosing $N$ large, we can assume $|s'|(x_j)\to0$ for any sequence $x_j\in X\setminus D$ which converges to a point $x\in D$. We then consider 
\begin{align}\label{KH corr nef and big eq1}
\Delta_T\log(|s'|_{h_0\otimes h_{D,0}\otimes h_{\mathcal{G},0}}+1)
&\ge -q(\mu_{\alpha}(E)-\mu_{\alpha}(\mathcal{G}))> 0.
\end{align}
Since $T$ is smooth K\"{a}hler on $X\setminus D$ and each vector bundle is $T$-HYM, we see that\\
 $\log(|s'|^2_{h_0\otimes h_{D,0}\otimes h_{\mathcal{G},0}}+1)$ is $L^2_1$-local on $X\setminus D$ by \cite{Chen} Proposition 2.17. Then, by the weak maximum principle (c.f. \cite{GilT} Theorem 8.1), we obtain that 
$$
\sup_{X\setminus D}\log(|s'|_{h_0\otimes h_{D,0}\otimes h_{\mathcal{G},0}}+1)=\sup_D\log(|s'|_{h_0\otimes h_{D,0}\otimes h_{\mathcal{G},0}}+1)=0.
$$
It contradicts to (\ref{KH corr nef and big eq1}). Hence $\Psi_{\infty}:E\to E_{\infty}$ has to be full rank.\\
Then we can see that $\Psi_{\infty}$ is isomorphic. In fact, $\det(\Psi_{\infty}):\det E\to \det E_{\infty}$ gives a nontrivial sheaf morphism. Let us consider a nontrivial sheaf morphism $\Phi_{\infty}:\det E_{\infty}\to \det E$. Then, by the proof of Theorem \ref{KH corr for rank1}, we can see that a holomorphic function $\det\Psi_{\infty}(\Phi_{\infty})$ on $X\setminus D$ has no zero point on $X\setminus D$. In particular $\det \Psi_{\infty}$ is nowhere vanish on $X\setminus D$, which implies $\Psi_{\infty}$ is isomorphic.
\end{proof}
By replacing $T$ by $\pi^*\omega$ in Assumption \ref{assumption2}, we obtain our main result:
\begin{theo}\label{main thm thm}Let $(Y,\omega_Y)$ be a compact K\"{a}hler manifold.
Let $\omega$ be the solution to the complex Monge-Amp\`{e}re equation $\omega^n=e^F\omega_Y^n$, where $F$ satisfies the following conditions:
\begin{itemize}
\item there is an analytic subset $Z\subset Y$ with $\codim Z\ge 2$ such that $F\in C^{\infty}(Y\setminus Z)$ and
\item there is a constant  $0\le a \le 1/2$ such that $F=-a\log(|f_1|^2+\cdots+|f_l|^2)+O(1)$ locally where $(f_1,\ldots,f_l)$ is a local defining functions of $Z$. 
\end{itemize}
Then, if a holomorphic vector bundle $F$ on $Y$ is $\{\omega\}^{n-1}$-slope polystable, then $F$ admits an $\omega$-admissible HYM metric.
\end{theo}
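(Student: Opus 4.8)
\emph{Plan of proof.} The strategy is to reduce Theorem~\ref{main thm thm} to the stable case already settled in Theorem~\ref{KH corr nef and big}, after pulling everything back to a resolution. By Lemma~\ref{regularity of omega} the solution $\omega$ exists, is a K\"{a}hler current on $Y$, and there is a modification $\pi\colon X\to Y$ along $Z$ with $D=\pi^{-1}(Z)$ an snc divisor, $(\pi^*\omega)^n=e^f|s_D|^q\omega_X^n$ for some $q\ge 1$ and $f\in C^\infty(X)$, and approximating K\"{a}hler metrics $\omega_i$ as in Assumption~\ref{assumption2}. Since $\{\omega\}=\{\omega_Y\}$ is a K\"{a}hler class, $\pi^*\{\omega\}$ is nef and big with $E_{nK}(\pi^*\{\omega\})=D$ and $\langle(\pi^*\omega)^n\rangle=e^f|s_D|^q\omega_X^n$, so $T:=\pi^*\omega$ satisfies Assumption~\ref{assumption3} (with $a=q/2$); moreover $\pi$ restricts to a biholomorphism $X\setminus D\cong Y\setminus Z$ taking $\pi^*\omega$ to $\omega$. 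It therefore suffices to construct a $\pi^*\omega$-admissible HYM metric on $\pi^*F$.

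\emph{Reduction to stable summands.} Since $\{\omega\}$ is K\"{a}hler, $\Amp(\{\omega\})=Y$, so the $\{\omega\}^{n-1}$-polystability of $F$ gives a splitting $F\simeq F_1\oplus\cdots\oplus F_l$ on $Y$ into $\{\omega\}^{n-1}$-slope stable sheaves of a common slope $\mu_{\{\omega\}}(F_j)=\mu_{\{\omega\}}(F)$; as direct summands of a vector bundle over the smooth manifold $Y$ each $F_j$ is locally free, so $E_j:=\pi^*F_j$ is a vector bundle, and is $\pi^*\{\omega\}^{n-1}$-slope stable by Lemma~\ref{invariance}. Because $T=\pi^*\omega$ has exactly the form required in Assumption~\ref{assumption2}, Proposition~\ref{L-infty estimate} holds for $T$, hence Assumption~\ref{assumption4} is in force, and Theorem~\ref{KH corr nef and big} applies to each $E_j$: the morphism $\Psi_{\infty,j}\colon E_j|_{X\setminus D}\to E_{\infty,j}$ is an isomorphism on $X\setminus D$. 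By Proposition~\ref{adHYM}, $h_{\infty,j}:=\Psi_{\infty,j}^{*}h_{0,j}$ is then a $\pi^*\omega$-admissible HYM metric on $E_j$, with Chern connection $\Psi_{\infty,j}^{-1}\circ\nabla_{\infty,j}\circ\Psi_{\infty,j}$ and HYM constant $\lambda_j=\lim_i\lambda_{i,j}$. By a standard Chern--Weil computation for admissible connections (legitimate because $\langle(\pi^*\{\omega\})^{n-1}\rangle=(\pi^*\{\omega\})^{n-1}$), this $\lambda_j$ depends only on $\mu_{\pi^*\{\omega\}}(E_j)$ and $\langle(\pi^*\{\omega\})^n\rangle$, and by the projection formula $\mu_{\pi^*\{\omega\}}(E_j)=\mu_{\{\omega\}}(F_j)=\mu_{\{\omega\}}(F)$; hence the $\lambda_j$ all coincide.

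\emph{Assembly and descent.} Put $h_\infty:=h_{\infty,1}\oplus\cdots\oplus h_{\infty,l}$ on $\pi^*F=E_1\oplus\cdots\oplus E_l$. Its Chern connection is block diagonal, hence integrable; over the Zariski-open set $\Omega_X:=X\setminus\bigl(D\cup\bigcup_j\Sigma_j\bigr)$ it satisfies $\sqrt{-1}\Lambda_{\pi^*\omega}F=\mathrm{diag}(\lambda_1\Id,\dots,\lambda_l\Id)=\lambda\,\Id$ because the $\lambda_j$ agree; and its curvature, being the orthogonal sum of the blockwise curvatures, is $L^2$ with respect to $\pi^*\omega$. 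Thus $h_\infty$ is a $\pi^*\omega$-admissible HYM metric on $\pi^*F$. Transporting it through the biholomorphism $\pi\colon X\setminus D\cong Y\setminus Z$, which carries $\pi^*\omega$ to $\omega$ and sends the codimension $\ge 2$ loci $\Sigma_j\subset X\setminus D$ to analytic subsets of $Y\setminus Z$ of codimension $\ge 2$, yields a smooth metric on $F$ over $Y\setminus\bigl(Z\cup\bigcup_j\pi(\Sigma_j)\bigr)$ whose Chern connection satisfies the $\omega$-HYM equation and has finite $L^2$ curvature with respect to $\omega$; as $\omega$ is smooth K\"{a}hler there, this is an $\omega$-admissible HYM metric on $F$ in the sense of Definition~\ref{admissible HYM def} (equivalently, one may remove the codimension $\ge 2$ set $\bigcup_j\pi(\Sigma_j)$ via the Bando--Siu removable-singularity theorem \cite{BS} and take $\Omega=Y\setminus Z$). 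This proves the theorem.

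\emph{Expected main obstacle.} Essentially all the difficulty is concentrated upstream, in Theorem~\ref{KH corr nef and big} --- the promotion of $\Psi_\infty$ to an isomorphism across the codimension-one divisor $D=E_{nK}(\pi^*\{\omega\})$ --- which we are entitled to assume here. Within the present step, the only points needing care are the check that the HYM constants of the stable summands genuinely coincide (this is precisely where the single-slope character of the polystable decomposition, the Chern--Weil identity for admissible connections, and the projection formula enter) and the bookkeeping confining the various bubbling sets $\Sigma_j$ to a single codimension $\ge 2$ analytic subset, so that the resulting metric is admissible on one fixed Zariski-open subset of $Y$.
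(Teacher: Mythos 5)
Your proposal is correct and follows essentially the same route as the paper, which in fact gives no separate argument for this theorem beyond the remark ``by replacing $T$ by $\pi^*\omega$ in Assumption \ref{assumption2}'' --- i.e.\ it is exactly the combination of Lemma \ref{regularity of omega}, Lemma \ref{invariance}, Proposition \ref{L-infty estimate} (validating Assumption \ref{assumption4} for $T=\pi^*\omega$), Theorem \ref{KH corr nef and big}, and Proposition \ref{adHYM} that you spell out. Your additional bookkeeping (splitting into locally free stable summands, matching the HYM constants via the common slope, and descending along the biholomorphism $X\setminus D\cong Y\setminus Z$) supplies precisely the routine details the paper leaves implicit.
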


\end{document}